\documentclass[11pt]{article}

\usepackage{amsmath,amssymb,amsthm,amsfonts}
\usepackage{geometry}
\usepackage{hyperref}
\hypersetup{hidelinks,
 pdfauthor={Wei Xie},
 pdftitle={Minor-Order Exponent Profiles of Oscillatory Matrices}}

\theoremstyle{plain}
\newtheorem{theorem}{Theorem}[section]
\newtheorem{lemma}[theorem]{Lemma}
\newtheorem{corollary}[theorem]{Corollary}

\theoremstyle{definition}
\newtheorem{definition}[theorem]{Definition}
\newtheorem{example}[theorem]{Example}

\theoremstyle{remark}
\newtheorem{remark}[theorem]{Remark}

\numberwithin{equation}{section}

\newcommand{\cint}[2]{[#1,#2]_{\preceq}}

\title{Minor-Order Exponent Profiles of Oscillatory Matrices}
\author{Wei Xie}
\date{26-09-05}

\begin{document}
	
	\maketitle
	
\begin{abstract}
For an oscillatory matrix \(A\), let \(e_k(A)\) be the least positive
integer \(m\) for which every minor of order \(k\) of \(A^m\) is positive.
We determine the profile \(\mathbf e(A)=(e_1(A),\ldots,e_n(A))\).
	We prove that, for every nonsingular totally nonnegative matrix, the
positive entries in each row of a multiplicative compound form an interval
when the indexing subsets are ordered componentwise.  The two endpoints of
these intervals are order-preserving and compose under matrix
multiplication.  Consequently, all minors of a fixed order are positive if
and only if the two remote corner minors of that order are positive.  For an
oscillatory matrix, each \(e_k(A)\) is therefore the larger of the first
positivity times of the corresponding upper-right and lower-left corner
minors.

This description yields the sharp inequalities
\begin{align*}
 e_k(A)&\leq\max\{k,n-k\} &&(1\leq k<n),\\
 |e_{k+1}(A)-e_k(A)|&\leq1 &&(1\leq k\leq n-2).
\end{align*}
If \(D=\operatorname{diag}(1,-1,1,-1,\ldots)\), then
\(DA^{-1}D\) is again oscillatory and
\(e_k(DA^{-1}D)=e_{n-k}(A)\) for \(1\leq k<n\);
the determinant exponent remains one.

	The ordered positions of the positive factors in an adjacent bidiagonal
factorization determine the entire profile, independently of the positive
factor values.  Each one-sided factor sequence can be represented by a
single permutation and its southwest rank table.  This gives a finite
characterization of all profiles and an integer unimodular realization of
each one.  We further obtain a compatibility condition across minor orders:
if \(3\leq k\leq n-3\) and \(e_k(A)\leq2\), then
\[
e_j(A)
\leq
2+\left\lceil\frac{\lvert j-k\rvert}{2}\right\rceil,
\qquad
2\leq j\leq n-2.
\]
In particular,
\[
e_k(A)\leq2
\quad\Longrightarrow\quad
e_{k+2}(A)\leq3,
\qquad
3\leq k\leq n-4,
\]
and both the constant \(3\) and the range of \(k\) are sharp.
\end{abstract}
	
\noindent
\textbf{Keywords:}
oscillatory matrix; total positivity; bidiagonal factorization;
permutation matrix.

\noindent
	\textbf{2020 Mathematics Subject Classification:}
	15B48, 05A05.
	
\section{Introduction}
\label{sec:introduction}

Throughout, all matrices are real and have order \(n\geq2\).  A matrix is
\emph{totally nonnegative} (TN) if every minor is nonnegative and
\emph{totally positive} (TP) if every minor is positive.  A square TN matrix
is \emph{oscillatory} if some positive power of it is TP.  The classical
theory is due to Gantmacher and Krein \cite{GantmacherKrein}; modern accounts
may be found in \cite{FallatJohnson,Pinkus}.

A basic theorem of Gantmacher and Krein \cite{GantmacherKrein} states that an oscillatory matrix
satisfies
\[
A^m\text{ is TP whenever }m\ge n-1.
\]
The least positive integer for which \(A^m\) is TP is called the exponent
of \(A\):
\[
e(A)=\min\{m\ge1:A^m\text{ is TP}\}.
\]
Thus \(e(A)\le n-1\), but its exact value depends on the zero pattern among
the minors of \(A\).

Several authors have studied this exact value.  Fallat gave a useful form of
the oscillatory criterion: a TN matrix is oscillatory if and only if it is
nonsingular and all entries on its first superdiagonal and subdiagonal are
positive \cite[Theorem~1]{FallatRemark}.  Fallat and Liu characterized the
case \(e(A)=n-1\) \cite{FallatLiu}.  Zarai and Margaliot obtained exact
exponents for several classes described by successive elementary
bidiagonal (SEB) factorizations and bounds for other classes
\cite{ZaraiMargaliot}.  They also asked how to determine the exponent
explicitly from the SEB factorization of a general oscillatory matrix
\cite[Section~4]{ZaraiMargaliot}.
The exponent has appeared in dynamics as well: Katz, Margaliot and Fridman
used it to bound subharmonic periods in oscillatory discrete-time systems
\cite{KatzMargaliotFridman}.

There is a related fixed-order theory.  The entries of the \(k\)-th
multiplicative compound are the minors of order \(k\), so eventual positivity
of compounds is the natural language for asking when one prescribed minor
order becomes positive.  Kushel used compounds to characterize eventual
strict total positivity \cite{Kushel}.  Alseidi, Margaliot and Garloff
studied strict sign regularity of order \(k\) and the associated
discrete-time systems
\cite{AlseidiMargaliotGarloff,AlseidiMargaliotGarloffDiscrete}, and Alseidi
and Garloff introduced the exact-order oscillatory exponent used here
\cite{AlseidiGarloff}.  An earlier use of the phrase ``oscillatory of order
\(k\)'', for a cumulative condition on all orders up to \(k\), appears in
\cite{LordPelsser}.

For a fixed oscillatory matrix, minors of different orders need not become
positive at the same power.  Since a matrix is TP exactly when its minors of
every order are positive, the scalar exponent is only the largest of these
first-positive powers; it does not retain the other orderwise thresholds.
Conversely, the prescribed-order theory fixes one order \(k\) and does not
compare different orders for the same matrix.  The vector of least powers at
which the minors of orders \(1,\ldots,n\) become positive is the
\emph{minor-order exponent profile}.  We study how this profile can be
determined from the upper-right and lower-left corner minors of the powers of
the matrix and from an arbitrary adjacent bidiagonal factorization, and how
its coordinates compare.  We also ask whether
restrictions involving only two coordinates account for all compatibility
conditions on the profile.

\section{Preliminaries}
\label{sec:preliminaries}

Put \([n]=\{1,\ldots,n\}\).  For ordered subsets
\(I=\{i_1<\cdots<i_r\}\) and \(J=\{j_1<\cdots<j_r\}\) of equal size, write
\(A[I,J]\) for the corresponding submatrix and
\(\Delta_{I,J}(A)=\det A[I,J]\).  The \(r\)-minors are all such quantities
with \(\lvert I\rvert=\lvert J\rvert=r\).
For \(I\subseteq[n]\), let \(I^c=[n]\setminus I\).  Put
\(L_r=\{1,\ldots,r\}\) and \(R_r=\{n-r+1,\ldots,n\}\).
The minors \(\Delta_{L_r,R_r}(A)\) and \(\Delta_{R_r,L_r}(A)\) are the
remote upper-right and lower-left corner minors of order \(r\).

For a real matrix \(B\), \(B>0\) and \(B\ge0\) mean entrywise strict
positivity and entrywise nonnegativity, respectively.  As in the
Introduction, TN and TP refer to all minors, and a square TN matrix is
oscillatory if one of its positive powers is TP.

Following Alseidi and Garloff \cite{AlseidiGarloff}, we use their
order-\(k\) exponent for oscillatory matrices.

\begin{definition}
Let \(A\) be an \(n\times n\) oscillatory matrix.  Its scalar exponent is
\[
 e(A)=\min\{m\ge1:A^m\text{ is TP}\}.
\]
For \(k=1,\ldots,n\), its \(k\)-th minor-order exponent is
\[
 e_k(A)
 =
 \min\left\{
 m\ge1:
 \text{all \(k\)-minors of }A^m\text{ are positive}
 \right\}.
\]
The vector
\[
 \mathbf e(A)=\bigl(e_1(A),\ldots,e_n(A)\bigr)
\]
is the minor-order exponent profile of \(A\).
\end{definition}
	
\begin{definition}[Multiplicative compound]
	For \(1\le k\le n\), the \(k\)-th multiplicative compound \(C_k(A)\) is
	the \(\binom nk\)-by-\(\binom nk\) matrix whose rows and columns are indexed
	by the \(k\)-subsets of \([n]\) (that is, by the elements of
	\(\binom{[n]}{k}\)), in lexicographic order, and whose
	\((I,J)\)-entry is
	\[
	C_k(A)_{I,J}=\Delta_{I,J}(A).
	\]
	For \(k=1\), the index sets are singletons, so \(C_1(A)=A\).  For
	\(k=n\), the only index set is \([n]\), so \(C_n(A)\) is the
	one-by-one matrix \((\det A)\).
\end{definition}

The Cauchy--Binet formula, applied simultaneously to all \(k\)-minors,
gives
\[
 C_k(AB)=C_k(A)C_k(B),
 \qquad C_k(A^m)=C_k(A)^m.
\]
See \cite[Theorem~1.1.1]{FallatJohnson}.  Since the entries of
\(C_k(A^m)\) are exactly the \(k\)-minors of \(A^m\),
\[
 e_k(A)=\min\{m\ge1:C_k(A)^m>0\}.
\]

\begin{definition}[Adjacent bidiagonal factors and factor sequences]
Let \(E_{ij}\) be the matrix unit with a single \(1\) in position
\((i,j)\).  For \(t\ge0\), set
\[
 x_i(t)=I+tE_{i,i+1},\qquad y_i(t)=I+tE_{i+1,i}
 \quad(1\le i<n).
\]
These are elementary upper and lower bidiagonal factors at position \(i\).
An adjacent bidiagonal factorization is a factorization
\(A=LDU\), where \(D\) is positive diagonal and \(L,U\) are ordered
products of factors \(y_i(t)\) and \(x_i(t)\), respectively.  A factor
with \(t=0\) is the identity matrix and is omitted.  The ordered position
labels of the remaining factors on either side form a one-sided factor
sequence.  A sequence is complete if every position
\(1,\ldots,n-1\) occurs.  The literature also calls this a successive elementary bidiagonal (SEB) factorization \cite{JohnsonOleskyDriessche}.
\end{definition}

Every nonsingular TN matrix has such a factorization; this is the
bidiagonal-factorization form of Neville elimination
\cite{Whitney,Loewner,CryerProperties,GascaPena,
JohnsonOleskyDriessche,FallatJohnson}.  Iterated Cauchy--Binet expresses
each minor of the product as a sum of nonnegative terms.  Consequently a
minor is positive precisely when at least one chain of intermediate index
sets contributes a positive term.
	
We shall use the following classical theorem.

\begin{theorem}[Gantmacher--Krein]\cite{GantmacherKrein}\label{thm:GK}
Let \(A\) be an \(n\times n\) oscillatory matrix.  Then \(A^m\) is TP for
every \(m\ge n-1\).  In particular,
\[
 e(A)\le n-1.
\]
\end{theorem}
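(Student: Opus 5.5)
The plan is to combine Fallat's criterion with a Cauchy--Binet chain argument, reducing positivity of an arbitrary minor of \(A^m\) to positivity of a special family of ``one-step'' minors of \(A\).

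By \cite[Theorem~1]{FallatRemark}, \(A\) is nonsingular and TN with \(a_{i,i+1}>0\) and \(a_{i+1,i}>0\) for all \(i\). Call a pair of \(k\)-subsets \(I=\{i_1<\cdots<i_k\}\), \(J=\{j_1<\cdots<j_k\}\) \emph{adjacent} if
\[
 |i_s-j_s|\le1 \quad\text{and}\quad \max(i_s,j_s)<\min(i_{s+1},j_{s+1})\qquad\text{for all } s.
\]

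\textbf{Key lemma.} For oscillatory \(A\), \(\Delta_{I,J}(A)>0\) for every adjacent pair \((I,J)\). This includes \(I=J\), so all principal minors are positive.

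I would prove the lemma with the adjacent bidiagonal factorization \(A=LDU\).
\begin{itemize}
 \item Since \(a_{i,i+1}>0\), the upper factor sequence must contain every position \(i\); otherwise the \((i,i+1)\) entry of \(LDU\) would vanish, as no path could cross from level \(i\) to level \(i+1\). The same argument with \(a_{i+1,i}>0\) shows the lower factor sequence is complete.
 \item Represent \(\Delta_{I,J}(A)\), via iterated Cauchy--Binet, as a sum of nonnegative weights over families of index chains. This is the Lindstr\"om planar-network picture.
 \item Exhibit one family of vertex-disjoint paths carrying \(i_s\) to \(j_s\). Each path either stays horizontal, or uses a single factor \(y_{j_s}\) when \(i_s=j_s+1\), or a single factor \(x_{i_s}\) when \(j_s=i_s+1\). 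Such factors exist by completeness.
 \item The separation condition \(\max(i_s,j_s)<\min(i_{s+1},j_{s+1})\) guarantees the paths occupy disjoint levels, so they do not collide. This gives a positive term, and hence \(\Delta_{I,J}(A)>0\).
\end{itemize}

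\textbf{Chain construction.} Next, for arbitrary \(k\)-subsets \(I,J\) and any \(m\ge n-1\), I would build a chain \(I=K_0,K_1,\dots,K_m=J\) of \(k\)-subsets with each consecutive pair adjacent. The natural choice is to move every element of \(K_t\) one step toward its target \(j_s\) per stage, stopping once it has arrived.
\begin{itemize}
 \item Since \(|i_s-j_s|\le n-1\), all elements arrive within \(n-1\) stages. The remaining stages use \(K_{t+1}=K_t\), which is adjacent because principal minors are positive.
 \item The sets stay strictly increasing and the separation condition holds: moving toward ordered targets never makes two elements collide or cross.
\end{itemize}
The one delicate configuration is \(i_s=i_{s+1}-1\) with \(i_s\) moving up and \(i_{s+1}\) stationary. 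This cannot occur, because \(j_s<j_{s+1}\) forces \(i_{s+1}\) to move up as well, or both to have already arrived. The same bookkeeping handles the case \(j_s+1=i_{s+1}\), and I expect these boundary cases to be the fiddliest part of the check.

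\textbf{Conclusion.} Cauchy--Binet gives
\[
\Delta_{I,J}(A^m)\ \ge\ \prod_{t=0}^{m-1}\Delta_{K_t,K_{t+1}}(A)\ >\ 0,
\]
because all the other terms in the expansion are nonnegative. As \(I,J\) and \(k\) were arbitrary, \(A^m\) is TP for every \(m\ge n-1\).

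The main obstacle is the key lemma, specifically showing that the chosen vertex-disjoint path family genuinely exists in the planar network. This is where completeness of both factor sequences and the separation condition are used. If the network argument proves awkward, I would fall back on Gantmacher and Krein's original route: induct on \(k\) using Sylvester's determinant identity for the contiguous minors of nonsingular TN matrices.
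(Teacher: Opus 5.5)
Your key lemma is sound: the path for index \(s\) uses only the levels \(\{i_s,j_s\}\), which the separation condition makes pairwise disjoint, and completeness supplies the needed \(x\)- and \(y\)-factors. The gap is in the chain construction, because simultaneous motion does not produce adjacent links. Write \(f_s(t)\) for the \(s\)-th smallest element of \(K_t\). Suppose \(f_{s+1}(t)=f_s(t)+1=x+1\) and both elements still have to move up. Your rule gives \(f_s(t+1)=x+1\) and \(f_{s+1}(t+1)=x+2\), and then
\[
\max\{f_s(t),f_s(t+1)\}=x+1=\min\{f_{s+1}(t),f_{s+1}(t+1)\},
\]
so the strict separation fails. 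The same happens when two neighbours move down together.

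You ruled out the case of a stationary upper neighbour, but the case where it moves along is exactly the bad one, and it is not a technicality. For \(U=x_1(1)x_2(1)\), the matrix
\[
A=U^TU=\begin{pmatrix}1&1&1\\1&2&2\\1&2&3\end{pmatrix}
\]
is oscillatory (\(\det A=1\), positive first off-diagonals), yet \(\Delta_{\{1,2\},\{2,3\}}(A)=1\cdot2-1\cdot2=0\). For \(I=\{1,2\}\), \(J=\{2,3\}\) and \(m=n-1=2\), your chain is \(\{1,2\},\{2,3\},\{2,3\}\). Its Cauchy--Binet term is \(0\), so nothing is proved. The staggered chain \(\{1,2\},\{1,3\},\{2,3\}\) does work: both links have minor \(1\).

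To repair this, let an element advance only into a slot that is vacant at the start of the stage. For an upward mover, set \(f_s(t+1)=\min\{f_s(t)+1,\ f_{s+1}(t)-1,\ j_s\}\) with \(f_{k+1}=\infty\), and treat downward movers symmetrically. A short case check then shows that every link is adjacent in your sense. However, your count ``\(|i_s-j_s|\le n-1\), hence \(n-1\) stages'' no longer applies, because elements now wait. You need two facts instead:
\begin{itemize}
\item the sharper distance bound \(|i_s-j_s|\le n-k\);
\item a bound on waiting: by induction on \(t\), an upward mover satisfies \(f_s(t)\ge\min\{j_s,\ i_s+t-(k-s)\}\), so it waits at most once for each of the \(k-s\) elements above it, and a downward mover waits at most \(s-1\) times.
\end{itemize}
Together these give arrival by stage \(n-k+(k-1)=n-1\). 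This waiting count is the real combinatorial content of the exponent \(n-1\).

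The paper itself only cites Gantmacher--Krein for this statement. Its own machinery recovers the bound in the sharper form \(e_k(A)\le\max\{k,n-k\}\) through the rectangle/pass counting (Theorem~\ref{thm:universal-fixed-order-bound}), combined with persistence of positivity (Lemma~\ref{lem:fixed-order-persistence}).
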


The following criterion is due to Fallat \cite[Theorem~1]{FallatRemark}.

\begin{theorem}
\label{thm:oscillatory-criterion}
Let \(A=(a_{ij})\) be an \(n\times n\) TN matrix.  Then \(A\) is
oscillatory if and only if it is nonsingular and
\[
 a_{i,i+1}>0,
 \qquad
 a_{i+1,i}>0,
 \qquad
 i=1,\ldots,n-1.
\]
\end{theorem}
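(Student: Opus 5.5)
We split the proof into necessity and sufficiency. Throughout we use two standard facts about a nonsingular TN matrix: its diagonal entries are positive (by the Fischer--Koteljanskii inequality \(0<\det A\le\prod_i a_{ii}\)), and it has an adjacent bidiagonal factorization \(A=LDU\).

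\emph{Necessity.} Suppose \(A^m\) is TP. Then \((\det A)^m=\det A^m>0\), so \(A\) is nonsingular and \(a_{ii}>0\) for all \(i\).

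Suppose, for contradiction, that \(a_{i,i+1}=0\). We show that the whole block \(A[\{1,\ldots,i\},\{i+1,\ldots,n\}]\) vanishes, using three \(2\times 2\) minors.
\begin{itemize}
\item For \(r<i\), the minor on rows \(\{r,i\}\) and columns \(\{i,i+1\}\) equals \(-a_{r,i+1}a_{ii}\). It is nonnegative, so \(a_{r,i+1}=0\).
\item For \(s>i+1\), the minor on rows \(\{i,i+1\}\) and columns \(\{i+1,s\}\) equals \(-a_{is}a_{i+1,i+1}\). Hence \(a_{is}=0\).
\item For general \(r\le i<i+1<s\), the minor on rows \(\{r,i+1\}\) and columns \(\{i+1,s\}\) equals \(-a_{rs}a_{i+1,i+1}\), because \(a_{r,i+1}=0\) by the first step. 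Hence \(a_{rs}=0\).
\end{itemize}
So \(A\) is block lower triangular with respect to the split \(\{1,\ldots,i\}\mid\{i+1,\ldots,n\}\). This shape is preserved under products, so every \(A^m\) has a zero entry and is never TP. The subdiagonal case follows by transposition.

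\emph{Sufficiency.} Let \(A=LDU\) be an adjacent bidiagonal factorization.

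First we show that both one-sided factor sequences are complete. If no factor \(x_i(t)\) appears in \(U\), then \(U\) is block diagonal for the split at \(i\), so \(u_{j,i+1}=0\) for all \(j\le i\). Since \(L\) is lower triangular,
\[
(LDU)_{i,i+1}=\sum_{j\le i} l_{ij}d_ju_{j,i+1}=0,
\]
contradicting \(a_{i,i+1}>0\). The argument for \(L\) is symmetric.

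Next we rewrite \(A^{n-1}=(LDU)^{n-1}\) as a single product \(L'D'U'\). We move all \(y\)-factors to the left and all \(x\)-factors to the right, using the relations available with positive parameters:
\begin{itemize}
\item \(x_i(s)y_j(t)=y_j(t)x_i(s)\) for \(|i-j|\ge1\), \(i\ne j\);
\item \(x_i(s)y_i(t)=y_i(t')\,d\,x_i(s')\), with positive \(t',s'\) and \(d\) a positive diagonal matrix;
\item diagonal matrices are pushed through \(x\)- and \(y\)-factors by rescaling their parameters.
\end{itemize}
None of these moves deletes a factor or changes its position label. Consequently the \(x\)-word of \(U'\) contains, as a subword, the concatenation of \(n-1\) complete words, and likewise for \(L'\).

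Each complete word contains a Coxeter word \(c\), that is, one occurrence of each label, as a subword. It is classical that \(c^{n-1}\) contains a reduced word for the longest permutation \(w_0\) of the symmetric group on \(n\) letters. A product of elementary factors with positive parameters whose word contains a reduced word of \(w_0\) is totally positive upper (respectively lower) triangular, in the sense that all its minors not forced to vanish by triangularity are positive. This is the Lusztig/Fomin--Zelevinsky parametrization, equivalently the standard Neville-elimination characterization of TP matrices. Extra factors only add nonnegative Cauchy--Binet terms, so they do not destroy this. Hence \(L'D'U'\) is TP, and \(A\) is oscillatory.

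The main obstacle is this sufficiency step: the commutation bookkeeping, and the word-combinatorial claim that \(n-1\) complete words always contain a reduced word for \(w_0\). For the latter I would argue by induction on \(n\). One complete word brings the label \(n\) to its final position, and the remaining \(n-2\) copies restricted to labels \(\le n-2\) are again complete for the smaller symmetric group. This matches the bound \(e(A)\le n-1\) of Theorem~\ref{thm:GK}.
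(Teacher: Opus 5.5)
The paper gives no proof of this statement; it quotes it from Fallat, so your proposal has to be judged on its own.

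Your necessity argument is correct and complete. Most of your sufficiency argument is also sound:
\begin{itemize}
\item completeness of both factor words, which is the same cut argument the paper uses in Lemma~\ref{lem:greedy-reachability};
\item the rewriting of $(LDU)^{n-1}$ as $L'D'U'$, whose one-sided words are the $(n-1)$-fold concatenations;
\item the Cauchy--Binet remark that extra factors cannot destroy positivity;
\item the passage from triangular total positivity of $L'$ and $U'$ to total positivity of $L'D'U'$.
\end{itemize}

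\textbf{The gap.} It is exactly the step you flag: that for every ordering $c$ of the positions $1,\ldots,n-1$, the word $c^{n-1}$ contains a reduced word for $w_0$. This is true, but you give no reference, and your induction fails at its first step. One copy of $c$ need not bring an extreme label to its final place. Take $n=4$ and $c=(2,1,3)$. Greedy exchanges give $1234\mapsto1324\mapsto3124\mapsto3142$, so label $4$ ends in position $3$ and label $1$ in position $2$. In matrix form, $x_2(1)x_1(1)x_3(1)=I+E_{12}+E_{23}+E_{34}+E_{24}$ has zero $(1,4)$ entry and zero minor on rows $\{1,2,3\}$ and columns $\{2,3,4\}$. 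A single copy carries an extreme label home only when the positions are ordered monotonically. In general the labels advance in parallel; here a second pass finishes, $3142\mapsto3412\mapsto4312\mapsto4321$. So the bound $n-1$ cannot be obtained by spending one full copy of $c$ on each label in turn.

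\textbf{First fix.} Use the labelled-rectangle argument of Lemma~\ref{lem:comparator-pass}. In the motion of $L_k$, cell $(p,b)$ of $[k]\times[n-k]$ is completed by pass $\max\{p,b\}$. The reason is that along each edge $\{d,d+1\}$ of the label line, a new pass is forced in only one of the two traversal directions. A monotone lattice path to $(p,b)$ has $p-1$ label-decreasing and $b-1$ label-increasing steps, and its net traversal of each edge is at most one. Hence $c^{n-1}$ carries every $L_k$ to $R_k$. By Theorem~\ref{thm:label-sorting}, it therefore turns $12\cdots n$ into $n\cdots21$. Each effective exchange creates exactly one inversion, so the effective exchanges form a reduced word for $w_0$ inside $c^{n-1}$.

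\textbf{Second fix.} You can drop the reduced-word and Lusztig step altogether. Once $Q_+$ and $Q_-$ are complete, combine the one-sided equivalences of Lemma~\ref{lem:greedy-reachability} with Lemma~\ref{lem:comparator-pass}, using the fact that extra moves never delay a greedy trajectory. This makes both remote corner minors of each order $k<n$ of $A^{n-1}$ positive. Theorem~\ref{thm:fixed-order-corner-criterion} then makes all $k$-minors positive, and together with $\det A^{n-1}>0$ this is total positivity. None of these results relies on the criterion being proved. The one exception is the completeness clause of Lemma~\ref{lem:greedy-reachability}, and you have already proved completeness directly.
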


\section{Interval supports of totally nonnegative matrices}
\label{sec:interval-support}

Write a \(k\)-subset as \(I=(i_1<\cdots<i_k)\).  We compare such sets
componentwise:
\[
 I\preceq J
 \quad\Longleftrightarrow\quad
 i_r\le j_r\quad(1\le r\le k),
\]
and, for \(P\preceq Q\), put
\[
 \cint{P}{Q}
 =
 \left\{J\in\binom{[n]}k:P\preceq J\preceq Q\right\}.
\]

For a bond \(a\in\{1,\ldots,n-1\}\), define
\[
 c_a^+(I)=
 \begin{cases}
  (I\setminus\{a\})\cup\{a+1\},&a\in I,\ a+1\notin I,\\
  I,&\text{otherwise},
 \end{cases}
\]
and
\[
 c_a^-(I)=
 \begin{cases}
  (I\setminus\{a+1\})\cup\{a\},&a\notin I,\ a+1\in I,\\
  I,&\text{otherwise}.
 \end{cases}
\]
Thus \(c_a^+\) moves a selected index one place to the right when legal,
whereas \(c_a^-\) is its left-moving counterpart.  Both maps are
order-preserving, and
\[
 c_a^-(I)\preceq I\preceq c_a^+(I).
\]

\begin{lemma}
\label{lem:one-step-interval}
If \(P\preceq Q\), then
\begin{align}
 \cint{P}{Q}\cup c_a^+\bigl(\cint{P}{Q}\bigr)
 &=\cint{P}{c_a^+(Q)},                                      \label{eq:plus-interval}\\
 \cint{P}{Q}\cup c_a^-\bigl(\cint{P}{Q}\bigr)
 &=\cint{c_a^-(P)}{Q}.                                      \label{eq:minus-interval}
\end{align}
\end{lemma}

\begin{proof}
We prove \eqref{eq:plus-interval}.  The left side is contained in the
right side because \(c_a^+\) is order-preserving and inflationary.  If
\(c_a^+(Q)=Q\), this already proves equality.

Suppose \(c_a^+(Q)\ne Q\).  Write
\[
 P=(p_1<\cdots<p_k),\qquad Q=(q_1<\cdots<q_k),
\]
and let \(s\) be the unique coordinate with \(q_s=a\).  Take
\[
 J\in\cint{P}{c_a^+(Q)}\setminus\cint{P}{Q}.
\]
Only the \(s\)-th upper-bound coordinate changed, so \(j_s=a+1\).
Moreover, \(a\notin J\).  Otherwise \(s>1\) and \(j_{s-1}=a\), whereas
\(q_{s-1}\le a-1\), contradicting \(J\preceq c_a^+(Q)\).  Hence
\[
 X=(J\setminus\{a+1\})\cup\{a\}
\]
is a \(k\)-subset and \(c_a^+(X)=J\).  Since \(p_s\le q_s=a\), lowering
the \(s\)-th coordinate preserves \(P\preceq X\), and it restores the old
upper bound \(X\preceq Q\).  Thus \(X\in\cint{P}{Q}\).

For \eqref{eq:minus-interval}, the reverse inclusion again follows from
order preservation.  If \(c_a^-(P)\ne P\), write \(p_s=a+1\).  Any new
point \(J\) has \(j_s=a\) and cannot contain \(a+1\); otherwise
\(j_{s+1}=a+1\), whereas \(p_{s+1}\ge a+2\), contradicting
\(c_a^-(P)\preceq J\).  Replacing \(a\) by \(a+1\) restores the old lower
bound.  Since \(q_s\ge p_s=a+1\), it also preserves the upper bound.
This proves \eqref{eq:minus-interval}.
\end{proof}

A signed bond word is a sequence
\[
 \mathbf q=((\varepsilon_1,q_1),\ldots,(\varepsilon_s,q_s)),
 \qquad \varepsilon_r\in\{+,-\}.
\]
At step \(r\), a trajectory may stay or apply
\(c_{q_r}^{\varepsilon_r}\).  Let \(\mathcal R_{\mathbf q}(I)\) be the
set of endpoints from \(I\).  Define \(G_{\mathbf q}^+(I)\) by taking
every legal \(+\)-move and ignoring every \(-\)-move, and define
\(G_{\mathbf q}^-(I)\) dually.

\begin{theorem}
\label{thm:signed-word-interval}
For every signed bond word and every \(I\in\binom{[n]}k\),
\[
 \mathcal R_{\mathbf q}(I)
 =
 \cint{G_{\mathbf q}^-(I)}{G_{\mathbf q}^+(I)}.
\]
Thus no index set between the two endpoint sets is missing.
\end{theorem}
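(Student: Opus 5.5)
Induction on the length \(s\) of the signed bond word, with Lemma~\ref{lem:one-step-interval} doing the work at each step.

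\emph{Base case.} For \(s=0\) the only trajectory stays at \(I\). Hence
\[
\mathcal R_{\mathbf q}(I)=\{I\}=\cint{I}{I},
\qquad
G^\pm_{\mathbf q}(I)=I .
\]

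\emph{Inductive step.} Let \(\mathbf q'\) be \(\mathbf q\) with its last letter \((\varepsilon,a)=(\varepsilon_s,q_s)\) removed, and suppose
\[
\mathcal R_{\mathbf q'}(I)=\cint{P}{Q},
\qquad P=G^-_{\mathbf q'}(I),\quad Q=G^+_{\mathbf q'}(I).
\]
Since \(P\preceq Q\), the interval is nonempty and the lemma applies. A trajectory for \(\mathbf q\) is a trajectory for \(\mathbf q'\) followed by either staying or applying \(c_a^\varepsilon\). Therefore
\[
\mathcal R_{\mathbf q}(I)=\cint{P}{Q}\cup c_a^{\varepsilon}\bigl(\cint PQ\bigr).
\]

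\emph{Case \(\varepsilon=+\).} By \eqref{eq:plus-interval} this set equals \(\cint{P}{c_a^+(Q)}\). By definition \(G^+_{\mathbf q}(I)=c_a^+(Q)\), since the greedy \(+\)-trajectory takes the last legal move. Also \(G^-_{\mathbf q}(I)=P\), since the \(-\)-greedy trajectory ignores \(+\)-moves.

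\emph{Case \(\varepsilon=-\).} This is symmetric, using \eqref{eq:minus-interval}.

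This closes the induction. One point to check is that the greedy definitions agree with this recursion: applying \(c_a^+\) when it is not legal returns the same set, so "take every legal move" is the same as applying \(c_a^+\) unconditionally. The same holds for \(c_a^-\). Consequently
\[
G^+_{\mathbf q}=c_{q_s}^{+}\circ G^+_{\mathbf q'}\ \text{ or }\ G^+_{\mathbf q'},
\]
according to the sign of the last letter, and dually for \(G^-\).

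The only potential obstacle is ensuring the interval hypothesis \(P\preceq Q\) persists, so that the lemma can be applied at the next step. It does: \(c_a^-(P)\preceq P\preceq Q\preceq c_a^+(Q)\), because both maps are deflationary or inflationary respectively. The rest is bookkeeping, with Lemma~\ref{lem:one-step-interval} carrying the combinatorial content.
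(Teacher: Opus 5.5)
Your proof is correct and follows the paper's argument: induction on the length of the word, starting from \(\cint{I}{I}\) for the empty word, with each appended \(+\)- or \(-\)-letter updating the interval by Lemma~\ref{lem:one-step-interval}. You also check explicitly that the greedy endpoints satisfy the same recursion and that \(P\preceq Q\) persists at each step; the paper leaves both points implicit.
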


\begin{proof}
The empty word gives the singleton interval \(\cint{I}{I}\).  If a
prefix has reachable set \(\cint{P}{Q}\), then appending a \(+\)-move
changes it to \(\cint{P}{c_a^+(Q)}\) by
\eqref{eq:plus-interval}, while appending a \(-\)-move changes it to
\(\cint{c_a^-(P)}{Q}\) by \eqref{eq:minus-interval}.  Induction proves
the result.
\end{proof}

The interval identity gives the following support theorem by bidiagonal
factorization and Cauchy--Binet.

\begin{theorem}
\label{thm:interval-support}
Let \(B\) be an \(n\)-by-\(n\) nonsingular TN matrix and
\(1\le k\le n\).  There are intrinsic order-preserving maps
\[
 \lambda_k(B),\rho_k(B):
 \binom{[n]}k\longrightarrow\binom{[n]}k,
 \qquad
 \lambda_k(B)(I)\preceq I\preceq\rho_k(B)(I),
\]
such that
\begin{equation}
\label{eq:interval-support}
 \Delta_{I,J}(B)>0
 \quad\Longleftrightarrow\quad
 \lambda_k(B)(I)\preceq J\preceq\rho_k(B)(I).
\end{equation}
They are the componentwise smallest and largest indices in the corresponding
row of \(C_k(B)\).

If \(A\) and \(B\) are nonsingular TN matrices, then
\begin{equation}
\label{eq:endpoint-composition}
 \lambda_k(AB)=\lambda_k(B)\circ\lambda_k(A),
 \qquad
 \rho_k(AB)=\rho_k(B)\circ\rho_k(A).
\end{equation}
Consequently, for every \(m\ge1\),
\begin{equation}
\label{eq:power-support}
 \Delta_{I,J}(B^m)>0
 \quad\Longleftrightarrow\quad
 \lambda_k(B)^m(I)\preceq J\preceq\rho_k(B)^m(I).
\end{equation}
\end{theorem}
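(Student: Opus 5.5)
We will prove the support statement \eqref{eq:interval-support} by writing \(B\) as a product of adjacent bidiagonal factors and reading Cauchy--Binet as a path-counting statement on \(k\)-subsets. Fix an adjacent bidiagonal factorization \(B=LDU\), with \(L\) a product of factors \(y_i(t)\), \(t>0\), and \(U\) a product of factors \(x_i(t)\), \(t>0\). Then \(C_k(B)\) is the corresponding ordered product of the compounds \(C_k(y_i(t))\), \(C_k(D)\) and \(C_k(x_i(t))\).

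The first step is to compute the support of each compound factor. For \(x_a(t)\) with \(t>0\), the minor \(\Delta_{I,J}(x_a(t))\) is nonzero, and then positive, exactly when \(J=I\) or \(J=c_a^+(I)\ne I\). Indeed the only off-diagonal entry moves column \(a+1\) into row \(a\), so a row index set \(I\) reaches the column set obtained by replacing \(a\) with \(a+1\) when this is legal. Dually, \(\Delta_{I,J}(y_a(t))>0\) exactly when \(J=I\) or \(J=c_a^-(I)\). The compound \(C_k(D)\) is a positive diagonal matrix. All entries of these compounds are nonnegative.

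Next we expand \(\Delta_{I,J}(B)\) by iterated Cauchy--Binet as a sum of products of nonnegative entries along chains of intermediate index sets. Because every term is nonnegative, \(\Delta_{I,J}(B)>0\) if and only if some chain is nonzero at every step. The admissible chains are exactly the trajectories of the signed bond word read off from the factorization: a \(-\)-move for each \(y\)-factor, in order, followed by a \(+\)-move for each \(x\)-factor. The support of row \(I\) of \(C_k(B)\) is therefore \(\mathcal R_{\mathbf q}(I)\). By Theorem~\ref{thm:signed-word-interval} this support equals \(\cint{G^-_{\mathbf q}(I)}{G^+_{\mathbf q}(I)}\). Define \(\lambda_k(B)(I)\) and \(\rho_k(B)(I)\) to be these two endpoints. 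Since the support is an interval, they are its componentwise least and greatest elements. This identification makes them intrinsic, that is, independent of the chosen factorization.

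The remaining properties follow from these definitions. Each \(G^\pm_{\mathbf q}\) is a composition of the order-preserving maps \(c_a^\pm\) and identities, so \(\lambda_k(B)\) and \(\rho_k(B)\) are order-preserving. The relation \(\lambda_k(B)(I)\preceq I\preceq\rho_k(B)(I)\) holds because \(I\) itself lies in the support, using the trivial trajectory that stays at \(I\).

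For the composition law \eqref{eq:endpoint-composition}, the cleanest argument avoids merging the two factorizations: \(AB\) is a product of elementary factors, but not directly in \(LDU\) order. Instead we argue with supports. Because \(C_k(AB)=C_k(A)C_k(B)\) with nonnegative entries, the support of row \(I\) of \(C_k(AB)\) is the union, over \(K\) in the support of row \(I\) of \(C_k(A)\), of the supports of row \(K\) of \(C_k(B)\). That union is
\[
\bigcup_{K\in\cint{\lambda_k(A)(I)}{\rho_k(A)(I)}}\cint{\lambda_k(B)(K)}{\rho_k(B)(K)}.
\]
By monotonicity of \(\lambda_k(B)\) and \(\rho_k(B)\), its componentwise least element is \(\lambda_k(B)(\lambda_k(A)(I))\) and its greatest is \(\rho_k(B)(\rho_k(A)(I))\). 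Since \(AB\) is nonsingular TN, we already know its support is an interval, so it equals the interval between these two elements. This gives \eqref{eq:endpoint-composition}, and induction on \(m\) gives \eqref{eq:power-support}.

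The main obstacle is the identification of nonzero compound entries of a single elementary factor. We must check that \(\Delta_{I,J}(x_a(t))\) vanishes unless \(J\in\{I,c_a^+(I)\}\), and is positive in those cases. This includes the case where both \(a\) and \(a+1\) lie in \(I\): there the submatrix is triangular with unit diagonal, so \(\Delta_{I,I}=1\), and no other column set works. This is a short direct computation. The subtle point is intrinsicness, which we handle by characterizing \(\lambda_k(B)(I)\) and \(\rho_k(B)(I)\) as the extremal elements of the row support rather than through the factorization.
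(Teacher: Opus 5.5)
Your proof is correct. For the support statement \eqref{eq:interval-support} it follows the paper: elementary compound supports, Cauchy--Binet read as a sum over optional trajectories, Theorem~\ref{thm:signed-word-interval}, and intrinsicness from the endpoints being the extreme elements of the row support.

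The difference is in the composition law. The paper simply concatenates a factorization of \(A\) with one of \(B\). The greedy lower (upper) trajectory of the concatenated word is then visibly the composition of the two greedy trajectories, and intrinsicness identifies the result with \(\lambda_k(AB)\) (\(\rho_k(AB)\)). The concatenated product is not in \(LDU\) order, but that does no harm. Your own Cauchy--Binet trajectory argument never uses the \(LDU\) order, and Theorem~\ref{thm:signed-word-interval} holds for an arbitrary signed bond word. So the obstacle you give for avoiding the merge is not real.

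Your alternative is also valid. You write row \(I\) of \(C_k(AB)\) as the union of the intervals \(\cint{\lambda_k(B)(K)}{\rho_k(B)(K)}\) over \(K\in\cint{\lambda_k(A)(I)}{\rho_k(A)(I)}\), and you read off its least and greatest elements by monotonicity. Its merit is that it is purely order-theoretic: it shows the composition law follows formally from three facts. These are interval row supports, monotone endpoint maps, and multiplicativity of compounds with nonnegative entries. It never needs to consider a factorization of \(AB\). The paper's route is shorter, because it reuses Theorem~\ref{thm:signed-word-interval} unchanged for the concatenated word.
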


\begin{proof}
By Loewner--Whitney factorization
\cite{Whitney,Loewner,FallatJohnson}, \(B\) is a product of positive
diagonal matrices and factors \(x_a(t),y_a(t)\) with \(t>0\).  Record
\(x_a(t)\) as \((+,a)\), record \(y_a(t)\) as \((-,a)\), and ignore the
positive diagonal factors.

From a row state \(I\), the \(k\)-th compound of \(x_a(t)\) has exactly
the positive transitions \(I\to I\) and \(I\to c_a^+(I)\); for
\(y_a(t)\) the nonstationary transition is \(I\to c_a^-(I)\).
Repeated Cauchy--Binet therefore shows that
\(\Delta_{I,J}(B)>0\) exactly when \(J\) is the endpoint of an optional
trajectory for the recorded signed word.  Theorem
\ref{thm:signed-word-interval} gives \eqref{eq:interval-support}.  The two
endpoints of a finite interval are unique, so the resulting maps do not
depend on the chosen factorization.

Concatenating a factorization of \(A\) with one of \(B\), a lower greedy
trajectory first reaches \(\lambda_k(A)(I)\) and then
\(\lambda_k(B)(\lambda_k(A)(I))\).  The same argument applies to the
upper endpoint, proving \eqref{eq:endpoint-composition}.  Iteration gives
\eqref{eq:power-support}.
\end{proof}

Applying Theorem~\ref{thm:interval-support} to \(B^T\) also shows that every
column support of \(C_k(B)\) is a componentwise interval.

  \begin{remark}
  	For a fixed row index set \(I\), the value \(\lambda_k(B)(I)\) is the
  	leftmost column index set in the positive support of that row, whereas
  	\(\rho_k(B)(I)\) is the rightmost one.  The interval-support formula says
  	more than the existence of these two endpoints: every intermediate index
  	set in componentwise order is also present.  In this sense, a compound row
  	has no holes.
  \end{remark}

\begin{corollary}\label{lem:principal-minors-positive}
Every principal minor of a nonsingular TN matrix is positive.
\end{corollary}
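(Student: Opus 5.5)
The plan is to read the corollary off Theorem~\ref{thm:interval-support}, specialized to principal minors. Fix a nonsingular TN matrix \(B\), an order \(k\), and a \(k\)-subset \(I\). A principal minor is \(\Delta_{I,I}(B)\), and by \eqref{eq:interval-support} it is positive if and only if
\[
 \lambda_k(B)(I)\preceq I\preceq\rho_k(B)(I).
\]
Theorem~\ref{thm:interval-support} already asserts \(\lambda_k(B)(I)\preceq I\preceq\rho_k(B)(I)\) as part of the statement. So the equivalence gives \(\Delta_{I,I}(B)>0\) directly.

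To avoid relying on that inequality as a black box, I would also check it from the proof of Theorem~\ref{thm:interval-support}. There \(\lambda_k(B)(I)=G^-_{\mathbf q}(I)\) and \(\rho_k(B)(I)=G^+_{\mathbf q}(I)\) for the signed word \(\mathbf q\) recorded from a Loewner--Whitney factorization. Each move satisfies \(c_a^-(J)\preceq J\preceq c_a^+(J)\), so composing only \(-\)-moves gives \(G^-_{\mathbf q}(I)\preceq I\), and composing only \(+\)-moves gives \(I\preceq G^+_{\mathbf q}(I)\). Equivalently, and more concretely, the trajectory that stays put at every step is an admissible trajectory, so \(I\in\mathcal R_{\mathbf q}(I)\). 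Its Cauchy--Binet term is a product of diagonal entries of the factors, all of which are positive: the \(x_a(t)\) and \(y_a(t)\) factors have unit diagonal, and the \(D\) factors have positive diagonal. The stationary transition \(I\to I\) is also positive for each compound factor, as noted in that proof.

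I expect no real obstacle. The only point needing care is that the identity matrix, obtained when all factors are trivial, and the positive diagonal factors preserve the stationary trajectory with a positive weight. As a sanity check that nonsingularity is essential, note that without it the matrix need not admit the factorization at all.
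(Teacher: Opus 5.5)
Your proposal is correct and matches the paper's proof. Both set \(J=I\) in the interval-support formula and use \(\lambda_k(B)(I)\preceq I\preceq\rho_k(B)(I)\); the paper justifies this inequality exactly as you do, by noting that the stationary transition is available at every elementary factor.
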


\begin{proof}
At every elementary factor the stationary transition is available.
Equivalently,
\[
\lambda_k(B)(I)\preceq I\preceq\rho_k(B)(I).
\]
Formula \eqref{eq:interval-support} gives \(\Delta_{I,I}(B)>0\).
\end{proof}

\section{Endpoint formulas for exponent profiles}
\label{sec:endpoint-formulas}

Put
\[
 L_k=\{1,\ldots,k\},
 \qquad
 R_k=\{n-k+1,\ldots,n\}.
\]

The all-orders two-corner test for total positivity is classical; see, for
example, \cite{Pinkus}.  The
fixed-order statement below also follows from Pinkus's zero-minor theorem
\cite[Theorem~1.1]{PinkusZero}.  We include a short proof based on
Theorem~\ref{thm:interval-support}.

\begin{theorem}
\label{thm:fixed-order-corner-criterion}
Let \(B\) be an \(n\)-by-\(n\) nonsingular TN matrix and
\(1\le k<n\).  All order-\(k\) minors of \(B\) are positive if and only if
\[
 \Delta_{L_k,R_k}(B)>0,
 \qquad
 \Delta_{R_k,L_k}(B)>0.
\]
\end{theorem}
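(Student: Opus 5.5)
Necessity is immediate, since the two corner minors are themselves order-\(k\) minors. For sufficiency we work with Theorem~\ref{thm:interval-support}: it is enough to show that \(\lambda_k(B)(I)=L_k\) and \(\rho_k(B)(I)=R_k\) for every \(I\in\binom{[n]}k\). Indeed, \(L_k\) and \(R_k\) are the least and greatest elements of \(\binom{[n]}k\) under \(\preceq\), so \eqref{eq:interval-support} then gives \(\Delta_{I,J}(B)>0\) for every pair \(I,J\).

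The key step comes from reading the hypothesis through \eqref{eq:interval-support}. The condition \(\Delta_{L_k,R_k}(B)>0\) says that \(R_k\) lies in the interval \(\cint{\lambda_k(B)(L_k)}{\rho_k(B)(L_k)}\). Since \(R_k\) is the top element, this forces \(\rho_k(B)(L_k)=R_k\). Because \(\rho_k(B)\) is order-preserving and \(L_k\preceq I\) for every \(I\), we get
\[
R_k=\rho_k(B)(L_k)\preceq\rho_k(B)(I)\preceq R_k,
\]
so \(\rho_k(B)(I)=R_k\) for all \(I\). Symmetrically, \(\Delta_{R_k,L_k}(B)>0\) gives \(\lambda_k(B)(R_k)=L_k\). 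Since \(I\preceq R_k\), monotonicity yields \(L_k\preceq\lambda_k(B)(I)\preceq\lambda_k(B)(R_k)=L_k\), hence \(\lambda_k(B)(I)=L_k\) for all \(I\).

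Combining the two conclusions, every row \(I\) of \(C_k(B)\) has positive support equal to all of \(\cint{L_k}{R_k}=\binom{[n]}k\). This is exactly the claim. Once Theorem~\ref{thm:interval-support} is available there is no real obstacle. The only points needing care are that \(L_k\) and \(R_k\) really are the global minimum and maximum of the componentwise order, which holds because \(i_r\ge r\) and \(i_r\le n-k+r\), and that the order-preservation of \(\lambda_k(B)\) and \(\rho_k(B)\) asserted in that theorem is used in the correct direction in each of the two cases.
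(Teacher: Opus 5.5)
Your proposal is correct and follows the paper's proof: you read the first corner condition through the interval-support formula to get \(\rho_k(B)(L_k)=R_k\), spread this to every \(I\) by order preservation, and do the same for \(\lambda_k(B)\) using \(R_k\). You write out the \(\lambda_k\) case and the extremality of \(L_k,R_k\) explicitly, whereas the paper dismisses these with ``similarly.''
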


\begin{proof}
	Only sufficiency needs proof.  Since \(R_k\) is the largest \(k\)-subset,
	the interval-support formula and the first corner condition imply
	\(\rho_k(B)(L_k)=R_k\).  Since \(\rho_k(B)\) is order-preserving and
	\(L_k\preceq I\),
	\[
	R_k=\rho_k(B)(L_k)\preceq\rho_k(B)(I)\preceq R_k,
	\]
	so \(\rho_k(B)(I)=R_k\) for every \(I\).  The second corner condition
	similarly gives \(\lambda_k(B)(I)=L_k\) for every \(I\).
	Formula \eqref{eq:interval-support} now makes every order-\(k\) minor
	positive.
\end{proof}

For an oscillatory matrix \(A\), define the two corner thresholds
\[
 \tau_k^{\mathrm{UR}}(A)
 =
 \min\{m\ge1:\Delta_{L_k,R_k}(A^m)>0\},
\]
\[
 \tau_k^{\mathrm{LL}}(A)
 =
 \min\{m\ge1:\Delta_{R_k,L_k}(A^m)>0\},
 \qquad
 \tau_k(A)=\max\{\tau_k^{\mathrm{UR}}(A),
                       \tau_k^{\mathrm{LL}}(A)\}.
\]
They are finite because a sufficiently high power of \(A\) is TP.

\begin{theorem}
\label{thm:exact-corner-formula}
Let \(A\) be oscillatory, let \(1\le k<n\), and abbreviate
\(\lambda_k=\lambda_k(A)\), \(\rho_k=\rho_k(A)\).  Then
\[
 e_k(A)
 =
 \tau_k(A)
 =
 \max\left\{
   \min\{m\ge1:\rho_k^m(L_k)=R_k\},
   \min\{m\ge1:\lambda_k^m(R_k)=L_k\}
 \right\}.
\]
More generally, the least nonnegative integer \(m\) for which the
\((I,J)\)-minor of \(A^m\) is positive is
\begin{equation}
\label{eq:all-pairs-hitting}
 \min\{m\ge0:\lambda_k^m(I)\preceq J\preceq\rho_k^m(I)\}.
\end{equation}
The largest of these all-pairs hitting times is attained by one of
\((L_k,R_k)\) and \((R_k,L_k)\), and equals \(e_k(A)\).
\end{theorem}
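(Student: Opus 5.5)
The plan is to derive everything from Theorem~\ref{thm:interval-support}, in particular the power-support formula \eqref{eq:power-support}, together with Theorem~\ref{thm:fixed-order-corner-criterion}. First, since \(A\) is oscillatory, it is nonsingular TN. So every power \(A^m\) is nonsingular TN as well, and both of those theorems apply to \(B=A^m\). For \(m=0\) we interpret \(A^0=I\) with \(\lambda_k^0=\rho_k^0=\mathrm{id}\); then \eqref{eq:power-support} says \(\Delta_{I,J}(I_n)>0\) iff \(J=I\), which is correct. With this convention, formula \eqref{eq:all-pairs-hitting} is immediate: the \((I,J)\)-minor of \(A^m\) is positive exactly when \(\lambda_k^m(I)\preceq J\preceq\rho_k^m(I)\), and we take the least such \(m\).

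Next we identify the corner thresholds. Since \(R_k\) is the \(\preceq\)-maximum of \(\binom{[n]}k\), the condition \(L_k\preceq R_k\preceq\rho_k^m(L_k)\) is equivalent to \(\rho_k^m(L_k)=R_k\). The lower bound \(\lambda_k^m(L_k)\preceq R_k\) is automatic. Hence \(\tau_k^{\mathrm{UR}}(A)=\min\{m\ge1:\rho_k^m(L_k)=R_k\}\), and dually \(\tau_k^{\mathrm{LL}}(A)=\min\{m\ge1:\lambda_k^m(R_k)=L_k\}\). Applying Theorem~\ref{thm:fixed-order-corner-criterion} to \(A^m\), all \(k\)-minors of \(A^m\) are positive iff both corner minors of \(A^m\) are positive.

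The step needing care is monotonicity in \(m\), which is what turns ``iff both corners are positive at \(m\)'' into \(e_k=\max\) of the two first hitting times. I would argue it as follows. Since \(\rho_k(I)\succeq I\) and \(\rho_k\) is order-preserving, once \(\rho_k^m(L_k)=R_k\) we get \(\rho_k^{m+1}(L_k)=\rho_k(R_k)\succeq R_k\), so \(\rho_k^{m+1}(L_k)=R_k\). The analogous argument handles \(\lambda_k\). Thus the set of \(m\) at which each corner minor is positive is an up-set. The set of \(m\) at which both are positive therefore begins at \(\tau_k(A)\), giving \(e_k(A)=\tau_k(A)\). Finiteness of both thresholds comes from Theorem~\ref{thm:GK}.

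Finally, for the all-pairs maximum we use order preservation again. For any \(I\), we have \(L_k\preceq I\), so \(\rho_k^m(L_k)\preceq\rho_k^m(I)\). Hence \(\rho_k^m(L_k)=R_k\) forces \(\rho_k^m(I)=R_k\). Likewise \(\lambda_k^m(R_k)=L_k\) forces \(\lambda_k^m(I)=L_k\). Therefore at \(m=\tau_k(A)\) every pair \((I,J)\) satisfies the interval condition, so every all-pairs hitting time is at most \(\tau_k(A)\). This maximum is attained, since the hitting time of \((L_k,R_k)\) is \(\tau_k^{\mathrm{UR}}\) and that of \((R_k,L_k)\) is \(\tau_k^{\mathrm{LL}}\); for these pairs \(I\neq J\) because \(k<n\), so \(m=0\) does not qualify. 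One of the two equals \(\tau_k(A)=e_k(A)\). I expect no real obstacle; the only subtle points are the monotonicity step and handling the \(m=0\) convention consistently.
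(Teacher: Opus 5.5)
Your proof is correct and follows essentially the same route as the paper. You read off \eqref{eq:all-pairs-hitting} from \eqref{eq:power-support}, apply the fixed-order corner criterion to \(A^m\) to get \(e_k(A)=\tau_k(A)\), and use order preservation to extend the two extreme hitting conditions to all pairs; your explicit persistence step (once \(\rho_k^m(L_k)=R_k\), it stays so because \(\rho_k\) is inflationary) spells out a point the paper's proof uses implicitly and only records afterwards in Lemma~\ref{lem:fixed-order-persistence}.
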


\begin{proof}
Formula \eqref{eq:all-pairs-hitting} is
\eqref{eq:power-support}.  Applying
Theorem~\ref{thm:fixed-order-corner-criterion} to \(A^m\) gives the
formula for \(e_k(A)\).

Let \(M\) be the larger extreme hitting time.  Then
\[
 \rho_k^M(L_k)=R_k,
 \qquad
 \lambda_k^M(R_k)=L_k.
\]
For every \(I\), monotonicity gives
\(\rho_k^M(I)=R_k\) and \(\lambda_k^M(I)=L_k\).  Hence every ordered pair
is reached by time \(M\), while the two extreme pairs show that no smaller
uniform bound works.
\end{proof}

\begin{lemma}\label{lem:fixed-order-persistence}
Let \(B\) be nonsingular and TN.  If
\(\Delta_{I,J}(B^m)>0\), then \(\Delta_{I,J}(B^s)>0\) for every
\(s\ge m\).  In particular, if all order-\(k\) minors of \(B^m\) are
positive, the same is true at every later power.
\end{lemma}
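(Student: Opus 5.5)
Write \(\lambda=\lambda_k(B)\) and \(\rho=\rho_k(B)\).  By \eqref{eq:power-support}, \(\Delta_{I,J}(B^m)>0\) is equivalent to
\[
 \lambda^m(I)\preceq J\preceq\rho^m(I).
\]
It therefore suffices to prove that the intervals \(\cint{\lambda^s(I)}{\rho^s(I)}\) are nested increasing in \(s\).  Concretely, the plan is to show that \(\lambda^s(I)\) decreases and \(\rho^s(I)\) increases in the componentwise order as \(s\) grows.

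First, Theorem~\ref{thm:interval-support} gives \(\lambda(X)\preceq X\preceq\rho(X)\) for every \(X\in\binom{[n]}k\).  This is the statement that principal minors are positive, as used in Corollary~\ref{lem:principal-minors-positive}.  Apply the inequality with \(X=\rho^{s}(I)\).  This yields \(\rho^{s}(I)\preceq\rho^{s+1}(I)\), so the sequence \(\rho^s(I)\) is increasing.  Dually, taking \(X=\lambda^{s}(I)\) yields \(\lambda^{s+1}(I)\preceq\lambda^{s}(I)\), so \(\lambda^s(I)\) is decreasing.  Iterating, for \(s\ge m\) we get
\[
 \lambda^s(I)\preceq\lambda^m(I)\preceq J\preceq\rho^m(I)\preceq\rho^s(I).
\]
By transitivity of \(\preceq\) and \eqref{eq:power-support} applied to the nonsingular TN matrix \(B\), this gives \(\Delta_{I,J}(B^s)>0\).

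For the second assertion, apply the first to every pair \((I,J)\) of \(k\)-subsets.

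As an alternative check, there is a direct argument that avoids the endpoint maps.  By Cauchy--Binet,
\[
 \Delta_{I,J}(B^{m+1})=\sum_K\Delta_{I,K}(B)\,\Delta_{K,J}(B^m)\ge\Delta_{I,I}(B)\,\Delta_{I,J}(B^m)>0.
\]
This uses nonnegativity of all the terms and positivity of the principal minor from Corollary~\ref{lem:principal-minors-positive}; induction on \(s\) finishes.  I would present this shorter argument.  There is no real obstacle; the only point needing care is that the principal minors are strictly positive, which is where nonsingularity enters.
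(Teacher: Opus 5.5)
Your proposal is correct. Your first argument is the paper's proof: the inflation property \(\lambda_k(B)(X)\preceq X\preceq\rho_k(B)(X)\) makes \(\lambda_k(B)^s(I)\) decrease and \(\rho_k(B)^s(I)\) increase, and \eqref{eq:power-support} turns this into persistence of positivity. The paper also cites order preservation, but your choice \(X=\rho_k(B)^s(I)\) shows that inflation alone suffices.

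The Cauchy--Binet argument you say you would present is a genuinely different mechanism. The single term \(K=I\) in \(\sum_K\Delta_{I,K}(B)\Delta_{K,J}(B^m)\) already dominates, so you need only nonnegativity of the minors of \(B\) and \(B^m\) and the one inequality \(\Delta_{I,I}(B)>0\). This bypasses the endpoint maps and the power-support formula. It also applies to any TN matrix, singular or not, whose principal minor indexed by \(I\) is positive.

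The key input is nevertheless the same in both routes: by \eqref{eq:interval-support}, the condition \(\lambda_k(B)(I)\preceq I\preceq\rho_k(B)(I)\) is equivalent to \(\Delta_{I,I}(B)>0\). Moreover, the paper proves Corollary~\ref{lem:principal-minors-positive} via Theorem~\ref{thm:interval-support}. So your shortcut reduces the logical dependencies only if you cite the classical positivity of principal minors of nonsingular TN matrices independently.

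What the paper's formulation buys in exchange is the explicit picture of the positive supports as nested intervals \(\cint{\lambda_k(B)^s(I)}{\rho_k(B)^s(I)}\). That picture fits its endpoint-orbit description of the positivity thresholds.
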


\begin{proof}
Since \(\lambda_k(B)(X)\preceq X\preceq\rho_k(B)(X)\) and both endpoint maps
are order-preserving,
\[
 \lambda_k(B)^{s}(I)\preceq\lambda_k(B)^m(I),
 \qquad
 \rho_k(B)^m(I)\preceq\rho_k(B)^{s}(I)
\]
whenever \(s\ge m\).  The claim follows from
\eqref{eq:power-support}.
\end{proof}

For an oscillatory matrix, positivity at every fixed order therefore
persists, and
\[
 e(A)=\max_{1\le k\le n}e_k(A).
\]

\begin{corollary}
\label{thm:power-profile-scaling}
Let \(A\) be oscillatory and let \(q\) be a positive integer.  Then
\(A^q\) is oscillatory and
\[
 e_k(A^q)=\left\lceil\frac{e_k(A)}q\right\rceil,
 \qquad
 e(A^q)=\left\lceil\frac{e(A)}q\right\rceil.
\]
\end{corollary}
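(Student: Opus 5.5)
The plan is to reduce everything to the monotonicity statement of Lemma~\ref{lem:fixed-order-persistence}, applied to the matrix \(A\) itself.

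First, \(A^q\) is oscillatory. It is TN, since \(C_k(A^q)=C_k(A)^q\) is a product of entrywise nonnegative matrices. Moreover, \(A\) is nonsingular by Theorem~\ref{thm:oscillatory-criterion}, so \(A^q\) is nonsingular. Finally, the power \((A^q)^{n-1}=A^{q(n-1)}\) is TP by Theorem~\ref{thm:GK}, because \(q(n-1)\ge n-1\). Hence \(A^q\) is oscillatory, and \(e_k(A^q)\) and \(e(A^q)\) are defined.

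Next, fix \(k\) and put \(e=e_k(A)\). Lemma~\ref{lem:fixed-order-persistence} is applied with \(B=A\), which is nonsingular TN. It shows that the set of exponents \(s\ge1\) for which all \(k\)-minors of \(A^s\) are positive is upward closed. By the definition of \(e\), this set is exactly \(\{s\ge e\}\). Therefore all \(k\)-minors of \((A^q)^m=A^{qm}\) are positive if and only if \(qm\ge e\), that is, if and only if \(m\ge\lceil e/q\rceil\). The least such \(m\ge1\) is therefore \(\lceil e/q\rceil\); this value is at least \(1\) because \(e\ge1\). This proves the formula for \(e_k(A^q)\) for every \(1\le k\le n\). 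For \(k=n\), the same argument applies directly, since the determinant is positive at every power.

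For the scalar exponent, I would use the identity \(e(\cdot)=\max_k e_k(\cdot)\), established just before the corollary for any oscillatory matrix, and apply it to \(A^q\). This gives \(e(A^q)=\max_k\lceil e_k(A)/q\rceil\). Since \(x\mapsto\lceil x/q\rceil\) is nondecreasing, the maximum commutes with it, so the right side equals \(\lceil \max_k e_k(A)/q\rceil=\lceil e(A)/q\rceil\).

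The main point requiring care is the upward-closedness. Without it, positivity at the specific power \(A^{qm}\) could not be read off from the single threshold \(e\). It is supplied by Lemma~\ref{lem:fixed-order-persistence}, so no step is expected to be a genuine obstacle.
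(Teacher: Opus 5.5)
Your proof is correct and follows the paper's argument. Persistence (Lemma~\ref{lem:fixed-order-persistence}) shows that \(A^{qm}\) has all \(k\)-minors positive exactly when \(qm\ge e_k(A)\), and taking the maximum over \(k\) then gives the scalar formula. Your explicit check that \(A^q\) is oscillatory (TN via compounds, nonsingular, and \((A^q)^{n-1}=A^{q(n-1)}\) TP by Theorem~\ref{thm:GK}) fills in a point the paper's short proof leaves implicit.
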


\begin{proof}
The \(k\)-minors of \((A^q)^m=A^{qm}\) are all positive exactly when
\(qm\ge e_k(A)\), by Lemma~\ref{lem:fixed-order-persistence}.  This gives
the first formula; taking the maximum over \(k\) gives the second.
\end{proof}

\section{Sharp bounds and comparison between orders}
\label{sec:bounds-comparison}

Let \(Q=(q_1,\ldots,q_\ell)\) be a sequence of adjacent positions in
\(1,\ldots,n-1\).  A \(Q\)-pass applies
\(c_{q_1}^+,\ldots,c_{q_\ell}^+\) in that order, and \(Q\) is
\emph{complete} if every position occurs.  Let \(t_k(Q)\) be the number of
repeated passes needed to carry \(L_k\) to \(R_k\), with value \(\infty\) if
this never occurs.

\begin{lemma}
\label{lem:greedy-reachability}
Let \(B=LDU\) be an adjacent bidiagonal factorization of a nonsingular TN
matrix.  Delete zero-parameter factors.  Record the upper position sequence
of \(U\) as \(Q_+\), and record the upper position sequence of \(L^T\), in
its factor order, as \(Q_-\).  Then, for every \(m\ge1\),
\begin{align}
 \Delta_{L_k,R_k}(B^m)>0
 &\quad\Longleftrightarrow\quad
 c_{Q_+^m}^+(L_k)=R_k,                                    \label{eq:one-sided-upper-sequence}\\
 \Delta_{R_k,L_k}(B^m)>0
 &\quad\Longleftrightarrow\quad
 c_{Q_-^m}^+(L_k)=R_k,                                    \label{eq:one-sided-lower-sequence}
\end{align}
where \(c_Q^+\) denotes the successive application of the maps indexed by
\(Q\).  If \(B=A\) is oscillatory, both sequences are complete and
\begin{equation}
\label{eq:word-profile}
 \tau_k^{\mathrm{UR}}(A)=t_k(Q_+),
 \qquad
 \tau_k^{\mathrm{LL}}(A)=t_k(Q_-),
 \qquad
 e_k(A)=\max\{t_k(Q_+),t_k(Q_-)\}.
\end{equation}
\end{lemma}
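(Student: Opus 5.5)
We use the signed-word description of Theorem~\ref{thm:interval-support}. Since the factorization of \(B^m\) is the \(m\)-fold concatenation of that of \(B\), we concatenate the recorded signed words. For the upper-right corner we use \eqref{eq:power-support} with \(I=L_k\). Because \(L_k\) is the least \(k\)-subset and \(R_k\) the greatest, \(\Delta_{L_k,R_k}(B^m)>0\) iff \(\rho_k(B)^m(L_k)=R_k\). The lower bound \(\lambda_k(B)^m(L_k)\preceq R_k\) holds automatically.

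Next we compute \(\rho_k(B^m)(L_k)=G^+_{\mathbf q^m}(L_k)\) from Theorem~\ref{thm:signed-word-interval}. The greedy upper trajectory takes every legal \(+\)-move and ignores every \(-\)-move and every diagonal factor. The \(-\)-moves come from \(L\) and the \(+\)-moves from \(U\). So in each copy of \(B=LDU\) the \(+\)-moves seen are exactly the sequence \(Q_+\) in order, and \(G^+_{\mathbf q^m}(L_k)=c^+_{Q_+^m}(L_k)\). This gives \eqref{eq:one-sided-upper-sequence}. Alternatively, \eqref{eq:endpoint-composition} gives \(\rho_k(B^m)=\rho_k(U)^m\circ\ldots\) in the appropriate order; since \(\rho_k(L)=\rho_k(D)=\mathrm{id}\), we get \(\rho_k(B)=\rho_k(U)=c^+_{Q_+}\).

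For the lower-left corner we transpose. The identity \(\Delta_{R_k,L_k}(B^m)=\Delta_{L_k,R_k}((B^T)^m)\) holds, and \(B^T=U^TDL^T\) is again an adjacent bidiagonal factorization whose upper part is \(L^T\). The factors \(y_i(t)^T=x_i(t)\) appear in reversed order. This reversal is exactly why \(Q_-\) is defined as the upper position sequence of \(L^T\) in its own factor order. Applying the first part to \(B^T\) then yields \eqref{eq:one-sided-lower-sequence}. The main point of care is this bookkeeping of the order reversal under transposition, together with checking that diagonal factors and stationary transitions do not affect the greedy endpoint.

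Finally, let \(B=A\) be oscillatory. By Theorem~\ref{thm:oscillatory-criterion}, \(a_{i,i+1}>0\) for all \(i\). Since \(a_{i,i+1}=\Delta_{\{i\},\{i+1\}}(A)\), the case \(k=1\) of \eqref{eq:interval-support} gives \(\rho_1(A)(\{i\})\succeq\{i+1\}\). But \(\rho_1(A)=c^+_{Q_+}\) on singletons moves \(i\) to \(i+1\) only if position \(i\) occurs in \(Q_+\). Hence \(Q_+\) is complete, and symmetrically \(Q_-\) is complete, using \(a_{i+1,i}>0\).

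Then \(t_k(Q_\pm)\) is by definition the least \(m\) with \(c^+_{Q_\pm^m}(L_k)=R_k\). By \eqref{eq:one-sided-upper-sequence} and \eqref{eq:one-sided-lower-sequence} this equals \(\tau_k^{\mathrm{UR}}(A)\) and \(\tau_k^{\mathrm{LL}}(A)\) respectively; both are finite by Theorem~\ref{thm:GK}. Theorem~\ref{thm:exact-corner-formula} then gives \(e_k(A)=\max\{t_k(Q_+),t_k(Q_-)\}\), which is \eqref{eq:word-profile}.
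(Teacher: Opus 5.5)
Your proof is correct and follows the paper's route: you identify $\rho_k(B)^m(L_k)$ with $m$ greedy $Q_+$-passes, ignoring the lower and diagonal factors, then transpose to handle the lower corner, and finally invoke Theorem~\ref{thm:exact-corner-formula}. The only minor difference is the completeness check. The paper argues directly that a missing bond $i$ makes $U$ block-diagonal across the cut $[i]\mid\{i+1,\ldots,n\}$, which forces $a_{i,i+1}=0$. You read the same obstruction off the $k=1$ interval support, since $\rho_1(A)=c^+_{Q_+}$ cannot move $\{i\}$ unless bond $i$ occurs; both arguments are valid.
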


\begin{proof}
In the factorization sequence for \(B\), the upper endpoint map ignores every
lower and diagonal factor and takes every legal upper move.  Hence
\(\rho_k(B)^m(L_k)\) is the result of \(m\) greedy \(Q_+\)-passes.
Transposition gives the corresponding statement for the lower corner and
\(Q_-\).  The power-support formula proves
\eqref{eq:one-sided-upper-sequence}--\eqref{eq:one-sided-lower-sequence}.
When \(B=A\) is oscillatory, Theorem~\ref{thm:exact-corner-formula} proves
\eqref{eq:word-profile}.

It remains to check completeness.  If position \(i\) did not occur with a
positive parameter in \(U\), then \(U\) would have a zero block across
the cut \([i]\mid\{i+1,\ldots,n\}\), and lower triangularity of \(L\)
would give \(a_{i,i+1}=0\).  For an oscillatory matrix this contradicts
Theorem~\ref{thm:oscillatory-criterion}.  Applying the same argument to
\(A^T\) proves completeness of \(Q_-\).
\end{proof}

The comparisons below use one rectangular bookkeeping device.  Put
\[
 \mathcal R_k=[k]\times[n-k],
 \qquad
 \lambda_k(p,b)=k-p+b.
\]
The cell \((p,b)\) records the unique adjacent exchange between the \(p\)-th
initially selected index, counted from the right, and the \(b\)-th initially
unselected index, counted from the left.  Its bond is
\(\lambda_k(p,b)\).  It can occur only after the cells \((p-1,b)\) and
\((p,b-1)\), when these cells exist.  Thus the whole motion from \(L_k\) to
\(R_k\) is represented by the ordinary product order on the rectangle
\(\mathcal R_k\).

\begin{lemma}
\label{lem:labelled-rectangle-recurrence}
Let \(Q=(q_1,\ldots,q_\ell)\) be complete and repeat it periodically, so
\(q_{r+\ell}=q_r\).  For a bond \(d\), put
\[
 N_d(s)=\min\{r>s:q_r=d\}.
\]
If \(T^{(k)}_{p,b}\) is the occurrence number at which cell \((p,b)\) of
\(\mathcal R_k\) is completed, then
\begin{equation}
\label{eq:crossing-recurrence}
 T^{(k)}_{p,b}
 =N_{\lambda_k(p,b)}
 \left(\max\{T^{(k)}_{p-1,b},T^{(k)}_{p,b-1}\}\right),
 \qquad
 T^{(k)}_{0,b}=T^{(k)}_{p,0}=0.
\end{equation}
Consequently
\[
 t_k(Q)=
 \left\lceil\frac{T^{(k)}_{k,n-k}}\ell\right\rceil.
\]
More generally, suppose a number \(\widehat T_{p,b}\) is assigned to every
cell so that it is an occurrence of the correct bond and is strictly larger
than the numbers assigned to the existing predecessor cells.  Then
\(T^{(k)}_{p,b}\le\widehat T_{p,b}\) for every cell.
\end{lemma}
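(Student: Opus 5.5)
We set up a dictionary between greedy \(Q\)-passes applied to \(L_k\) and monotone fillings of the rectangle \(\mathcal R_k\), and then read off the recurrence \eqref{eq:crossing-recurrence} directly.

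\emph{Encoding states.} Encode a \(k\)-subset \(I\) reached from \(L_k\) by a lattice path, or equivalently by an order ideal of \(\mathcal R_k\). Selected indices never pass one another under the maps \(c_a^+\), so the \(p\)-th selected index from the right stays well defined. The same holds for the \(b\)-th unselected index from the left, since an unselected index moves left only by swapping with a selected one. The \(p\)-th selected index from the right starts at \(k-p+1\). Let \(D_p\) be the number of unselected indices it has crossed. Then its current position is \(k-p+1+D_p\), and \(D_1\le D_2\le\cdots\) fails in the natural way, giving instead \(D_p\le D_{p-1}\). The crossed cells \(\{(p,b):b\le D_p\}\) therefore form an order ideal of \(\mathcal R_k\).

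\emph{Which exchanges are legal.} A move of the \(p\)-th selected index across its \((D_p+1)\)-th unselected neighbour happens at bond \(k-p+1+D_p=\lambda_k(p,D_p+1)\). It is legal exactly when the next position is unselected. That is the case exactly when either \(p=1\) or \(D_{p-1}>D_p\), which is the condition that cell \((p-1,D_p+1)\) is already completed. So at each step \(c_d^+\) completes precisely the cells \((p,b)\) with \(\lambda_k(p,b)=d\) whose predecessors \((p-1,b)\) and \((p,b-1)\) are completed and which are not yet completed themselves. Two such cells sit on the same antidiagonal \(p-b=\text{const}\), so at most one of them can be addable, and the move is unique.

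\emph{The recurrence.} By induction along the product order, cell \((p,b)\) becomes addable exactly at time \(\max\{T_{p-1,b},T_{p,b-1}\}\). It stays addable until completed, because no other move can complete it. It is then completed at the first later occurrence of its bond, which is \(N_{\lambda_k(p,b)}(\cdot)\). This gives \eqref{eq:crossing-recurrence}. The set \(R_k\) corresponds to the full rectangle, and the maximal cell \((k,n-k)\) is completed last. One pass consumes \(\ell\) occurrences, so \(m\) passes reach \(R_k\) iff \(T_{k,n-k}\le m\ell\), which gives \(t_k(Q)=\lceil T_{k,n-k}/\ell\rceil\). Completeness makes every \(N_d\) finite.

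\emph{The comparison claim.} Induct over cells in a linear extension of the product order. Suppose \(T\le\widehat T\) holds at the predecessors. Then \(\widehat T_{p,b}\) is an occurrence of bond \(\lambda_k(p,b)\) strictly exceeding \(\max\{T_{p-1,b},T_{p,b-1}\}\). Since \(N\) returns the least such occurrence, \(T_{p,b}\le\widehat T_{p,b}\).

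The main obstacle is the bookkeeping in the legality step: verifying that the greedy \(c_d^+\) moves exactly the single addable cell on bond \(d\), and that no cell is ever completed "by accident" through a different index.
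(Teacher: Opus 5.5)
Your proof is correct and takes the same route as the paper's: the two predecessor cells are the only obstructions to the exchange at \((p,b)\), so that exchange happens at the first later occurrence of its bond; the southeast cell is completed last; and the comparison claim follows by induction because \(N\) picks the earliest admissible occurrence. Your encoding of states by the order ideal \(\{(p,b):b\le D_p\}\) with \(D_p\le D_{p-1}\) (the sentence stating this inequality is garbled, but the inequality itself is right) spells out the legality bookkeeping that the paper's brief proof leaves to the reader.
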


\begin{proof}
The two predecessors express the only two possible obstructions to the
exchange represented by \((p,b)\).  The first subsequent occurrence of its
bond therefore gives \eqref{eq:crossing-recurrence}.  The southeast corner
is the last cell in the product order, proving the formula for \(t_k(Q)\).
The final assertion follows by induction on \(p+b\): the recurrence always
chooses the earliest admissible occurrence.
\end{proof}

\begin{lemma}
\label{lem:comparator-pass}
Let \(h,k\ge1\), and let \(c\) be any ordering of the
\(k+h-1\) adjacent positions.  Repeating this order carries \(L_k\) to
\(R_k\) in at most
\[
 \max\{k,h\}
\]
passes.
\end{lemma}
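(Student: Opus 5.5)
The plan is to reduce the claim to the rectangle recurrence of Lemma~\ref{lem:labelled-rectangle-recurrence}. Here \(n=k+h\) and \(Q=c\) is a complete word of length \(\ell=k+h-1\) in which every bond occurs exactly once. Each pass therefore contains exactly one occurrence of each bond \(\lambda_k(p,b)\) for the cells \((p,b)\in\mathcal R_k=[k]\times[h]\). By the consequence \(t_k(Q)=\lceil T^{(k)}_{k,h}/\ell\rceil\), it suffices to show that the southeast cell \((k,h)\) is completed no later than pass \(\max\{k,h\}\). By the comparison clause of the lemma, it is enough to exhibit \emph{some} admissible assignment \(\widehat T_{p,b}\) that places \((k,h)\) within that pass. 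An admissible assignment uses an occurrence of the correct bond and is strictly later than the assignments of the existing predecessors.

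The natural candidate puts cell \((p,b)\) in pass \(\pi(p,b)=\max\{p,b\}\). The cells with \(\max\{p,b\}=m\) form a hook, namely the row segment \((m,1),\ldots,(m,m)\) and the column segment \((m-1,m),\ldots,(1,m)\), truncated to \(\mathcal R_k\). Along the row segment the bonds are \(k-m+1,\ldots,k\), and along the column segment they are \(k+1,\ldots,k+m-1\). So within a hook every bond occurs at most once, and all bonds are distinct. A predecessor lying outside the hook has strictly smaller \(\max\), so it sits in an earlier pass and causes no difficulty. A predecessor inside the hook sits in the same pass and carries the adjacent bond \(\lambda\pm1\). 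Whether it occurs before bond \(\lambda\) depends on the ordering \(c\).

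The main obstacle is exactly this: \(\pi\) is not admissible when \(c\) has the wrong relative order of \(\lambda\) and \(\lambda\pm1\). For example, on the row segment \((m,b-1)\to(m,b)\) one needs bond \(\lambda-1\) to precede bond \(\lambda\) in \(c\). I would repair this by modifying \(\pi\) according to the descents of \(c\). Let \(d(\lambda)\in\{0,1\}\) record whether bond \(\lambda+1\) precedes \(\lambda\) in \(c\). I would try to assign the cells of each hook by \emph{pulling earlier} rather than pushing later. If a cell's in-hook predecessor cannot precede it in pass \(m\), then I claim the chain of cells along the same diagonal \(b-p=\)const could already have been completed one pass earlier. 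The argument would go by a secondary induction on the diagonal, using the fact that a wrong-order pair \((\lambda,\lambda\pm1)\) in \(c\) is a right-order pair for the opposite segment of the hook.

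Concretely, I would aim to prove by induction on \(p+b\) the invariant that \(T^{(k)}_{p,b}\) lies in pass at most \(\max\{p,b\}\). I would check it first on the small cases. The case \(k=1\) is the row of bonds \(1,\ldots,h\): any order needs at most \(h\) passes, with equality for the descending order. The case \(k=h=2\) can be verified for all orders of \(\{1,2,3\}\). If the pure \(\max\{p,b\}\) invariant turns out to fail at individual interior cells, the fallback is a potential-function argument on the particles. The \(p\)-th selected index from the right, once no longer blocked by particle \(p-1\), advances at least one place per pass. Particle \(p-1\) stops blocking it after a number of passes controlled by the symmetric statement with the roles of \(k\) and \(h\) interchanged, which is legitimate by the duality \(I\mapsto I^c\). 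Combining the two bounds should yield \(\max\{k,h\}\), and this combination is the step I expect to need the most care.
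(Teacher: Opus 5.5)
\paragraph{There is a genuine gap.} Your reduction is correct. Each bond occurs once per pass, so it suffices to show that cell $(p,b)$ of $\mathcal R_k$ is completed in a pass $s_{p,b}\le\max\{p,b\}$. That bound is true. But the proposal never proves it.

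The induction on $p+b$ with the bare invariant $s_{p,b}\le\max\{p,b\}$ does not close. Take a cell $(m,b)$ with $b\le m$ whose in-hook predecessor $(m,b-1)$ has $s_{m,b-1}=m$. If bond $k-m+b$ precedes bond $k-m+b-1$ in $c$, the recurrence forces $s_{m,b}=m+1$. To rule this out you must know that in such a configuration the predecessor was actually completed by pass $m-1$. The invariant does not carry that information. Your claim about ``pulling earlier'' along diagonals is exactly the statement that would be needed, and it is asserted without proof. The particle/potential fallback stops at the step that carries all the difficulty: turning two blocking estimates into a maximum rather than a sum.

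\paragraph{The missing idea.} Unroll the recurrence globally rather than hook by hook. Write $\pi(d)$ for the position of bond $d$ in $c$. Then the recurrence of Lemma~\ref{lem:labelled-rectangle-recurrence} says that $s_{p,b}=1+{}$the maximum, over monotone lattice paths from $(1,1)$ to $(p,b)$, of the number of ``wait'' steps. A wait step goes from a cell of label $\lambda$ to a cell of label $\lambda'$ with $\pi(\lambda')<\pi(\lambda)$.

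View such a path as a walk on the label line $\lambda=k-u+v$, with $b-1$ up-steps and $p-1$ down-steps. Each edge $\{\lambda,\lambda+1\}$ of that line is a wait in exactly one direction. Pair the opposite traversals of each edge. This leaves only the $|b-p|$ net traversals unpaired, all in the same direction. There are $\min\{p,b\}-1$ pairs, each costing exactly one wait, and each unpaired step costs at most one wait. Hence
\[
 s_{p,b}\le 1+(\min\{p,b\}-1)+|b-p|=\max\{p,b\}.
\]
Your remark that a wrong-order pair in one direction is a right-order pair in the other is the seed of this pairing. It has to be applied along whole paths through $\mathcal R_k$, not within a single hook.
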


\begin{proof}
Let \(\pi(d)\) be the position of bond \(d\) in the pass and let \(s_{p,b}\)
be the pass in which cell \((p,b)\) of \(\mathcal R_k\) is completed.  By
Lemma~\ref{lem:labelled-rectangle-recurrence}, \(s_{p,b}\) is one plus the
largest number of decreases of \(\pi\) along a lattice path from \((1,1)\)
to \((p,b)\), where the cell \((u,v)\) has label \(k-u+v\).

Such a path has \(b-1\) steps that raise the label and \(p-1\) steps that
lower it.  Pair opposite traversals of each edge of the label line.  In a
paired traversal exactly one direction can decrease \(\pi\).  If
\(b-1\ge p-1\), the net traversal of every edge is either zero or one step
in the label-increasing direction, so the number of decreases is at most
the \(b-1\) increasing steps.  Reflection gives the bound \(p-1\) when
\(p-1\ge b-1\).  Therefore
\[
 s_{p,b}\le\max\{p,b\},
\]
and the southeast cell is completed by pass \(\max\{k,h\}\).
\end{proof}

\begin{theorem}
\label{thm:universal-fixed-order-bound}
Every \(n\)-by-\(n\) oscillatory matrix satisfies
\[
 e_k(A)\le\max\{k,n-k\},
 \qquad 1\le k<n.
\]
The bound is sharp for every pair \((n,k)\).
\end{theorem}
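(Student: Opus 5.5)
The upper bound will come from the one-sided reduction in Lemma~\ref{lem:greedy-reachability}, and sharpness from an explicit bidiagonal product. Fix an adjacent bidiagonal factorization \(A=LDU\) with zero-parameter factors deleted. By Lemma~\ref{lem:greedy-reachability}, both \(Q_+\) and \(Q_-\) are complete and \(e_k(A)=\max\{t_k(Q_+),t_k(Q_-)\}\). It therefore suffices to prove that every complete sequence \(Q\) of positions in \(1,\ldots,n-1\) satisfies
\[
t_k(Q)\le\max\{k,n-k\}.
\]

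If every position occurs exactly once in \(Q\), this is Lemma~\ref{lem:comparator-pass} with \(h=n-k\), since then \(k+h-1=n-1\). In general, \(Q\) may contain repetitions. I would extract a subsequence \(c\) of \(Q\) that contains each bond exactly once, for example the first occurrence of each bond in \(Q\), listed in the order in which they appear in \(Q\).

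Next I would compare \(c\)-passes with \(Q\)-passes. The cleanest route is the final assertion of Lemma~\ref{lem:labelled-rectangle-recurrence}. Let \(T^{c}_{p,b}\) be the completion numbers for the periodic repetition of \(c\). Each occurrence of \(c\) sits inside the corresponding pass of \(Q\) at a definite occurrence of the same bond, and this embedding of \(c^\infty\) into \(Q^\infty\) is strictly increasing. Mapping each \(T^{c}_{p,b}\) through this embedding therefore gives numbers \(\widehat T_{p,b}\) that are occurrences of the correct bond and strictly exceed the values at the predecessor cells. Hence \(T^{(k)}_{p,b}\le\widehat T_{p,b}\). The southeast cell is then completed within the same pass index as for \(c\), so \(t_k(Q)\le t_k(c)\le\max\{k,n-k\}\).

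An alternative to the embedding argument is monotonicity. The maps \(c_a^+\) are inflationary and order-preserving, so inserting extra moves into a pass can only move the state weakly up in \(\preceq\). Either version is routine; the only care needed is the bookkeeping of occurrence numbers across passes.

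For sharpness, I would take a single matrix
\[
A=L\,U,\qquad U=x_{n-1}(1)\,x_{n-2}(1)\cdots x_1(1),
\]
with \(L\) a product of \(y_1(1),\ldots,y_{n-1}(1)\), ordered so that the sequence \(Q_-\) recorded from \(L^T\) is ascending, \((1,2,\ldots,n-1)\). This \(A\) is a product of TN factors, hence nonsingular and TN. Both sequences are complete, so the first super- and subdiagonal entries are positive, and \(A\) is oscillatory by Theorem~\ref{thm:oscillatory-criterion}.

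Now compute the two thresholds.
\begin{itemize}
\item For the descending pass \(Q_+=(n-1,\ldots,1)\), one pass sends \(L_k\) to \(\{2,\ldots,k+1\}\), and in general it shifts a contiguous block right by one. Hence \(t_k(Q_+)=n-k\).
\item For the ascending pass, one pass carries the largest index not yet at its final place all the way right. The \(r\)-th pass places the \(r\)-th index from the right, so \(t_k(Q_-)=k\).
\end{itemize}
Thus \(e_k(A)=\max\{k,n-k\}\) for every \(k\) simultaneously.

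I expect the main obstacle to be a careful check of the factor-order convention for \(Q_-\). One must ensure that the chosen ordering of the \(y_i\) really produces an ascending \(Q_-\) in the sense of Lemma~\ref{lem:greedy-reachability}. If it is reversed, I would simply use the reverse ordering of the \(y\) factors.
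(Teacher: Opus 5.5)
Your proof is correct. For the upper bound you follow the paper's route. You keep the first occurrence of each bond, apply Lemma~\ref{lem:comparator-pass} to that one-pass ordering, and note that the extra moves in \(Q_\pm\) cannot delay the greedy trajectory. The paper asserts that last step in one line. Your embedding into an admissible filling, or equivalently your observation that each \(c_a^+\) is order-preserving and inflationary, spells it out.

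The genuine difference is in sharpness. The paper uses the symmetric matrices \(A_c=U(c)^TU(c)\) with \(c=c^\uparrow\) or \(c=c^\downarrow\), chosen by whether \(k\ge n-k\). These have \(e_j=j\) and \(e_j=n-j\) respectively, so each is extremal only on half of the orders. You instead mix the two sequences in one factorization. With \(L=y_{n-1}(1)\cdots y_1(1)\) one has \(L^T=x_1(1)\cdots x_{n-1}(1)\), so your matrix is \(U(c^\uparrow)^TU(c^\downarrow)\). This is the realization \(U(Q_-)^TU(Q_+)\) from Theorem~\ref{thm:finite-word-realization} with \(Q_-=c^\uparrow\) and \(Q_+=c^\downarrow\). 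It satisfies \(e_k(A)=\max\{k,n-k\}\) for every \(k\) simultaneously, and in particular \(e(A)=n-1\).

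So your witness gives a slightly stronger sharpness statement: one matrix is extremal at every order. What it gives up is symmetry, which the paper's witnesses keep. The factor-order convention you were worried about is settled by this choice of \(L\).
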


\begin{proof}
Take the complete sequences \(Q_+,Q_-\) in
Lemma~\ref{lem:greedy-reachability}.  Select the first occurrence of each
position in either sequence, retaining their order.  Additional legal
adjacent moves cannot
delay a greedy trajectory, so Lemma~\ref{lem:comparator-pass} gives
\[
 t_k(Q_+),t_k(Q_-)\le\max\{k,n-k\}.
\]
Equation \eqref{eq:word-profile} proves the upper bound.

For sharpness, put
\[
 c^\uparrow=(1,2,\ldots,n-1),
 \qquad
 c^\downarrow=(n-1,n-2,\ldots,1),
\]
and, for a position sequence \(c=(c_1,\ldots,c_s)\), set
\[
 U(c)=\prod_{r=1}^s(I+E_{c_r,c_r+1}),
 \qquad A_c=U(c)^TU(c).
\]
Every position occurs, so \(A_c\) is nonsingular TN with positive first
off-diagonal entries and is oscillatory.  Both one-sided sequences in its
adjacent bidiagonal factorization are \(c\); hence
\[
 e_j(A_c)=t_j(c).
\]
In an increasing pass every cell in row \(p\) of \(\mathcal R_j\)
is completed in pass \(p\), so \(t_j(c^\uparrow)=j\).  Reflection gives
\(t_j(c^\downarrow)=n-j\).  Choosing \(c^\uparrow\) when
\(k\ge n-k\), and \(c^\downarrow\) otherwise, yields equality.
\end{proof}

\subsection{Neighboring minor orders}

Consecutive components among \(e_1(A),\ldots,e_{n-1}(A)\) differ by at most one.
The proof compares the same
periodic sequence of adjacent moves at two neighboring minor orders.

Let \(w=(c_1,\ldots,c_\ell)\) be a sequence in
\(1,\ldots,n-1\), with every position occurring at least once.  Repeat \(w\)
periodically.  Thus \(t_k(w)\) is the number of passes needed to carry
\(L_k\) to \(R_k\).

\begin{lemma}
	\label{lem:one-period-interlacing}
	For every \(k=1,\ldots,n-2\),
	\[
	|t_{k+1}(w)-t_k(w)|\le1.
	\]
\end{lemma}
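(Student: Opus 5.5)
The plan is to compare the two rectangles \(\mathcal R_k=[k]\times[n-k]\) and \(\mathcal R_{k+1}=[k+1]\times[n-k-1]\) through the recurrence of Lemma~\ref{lem:labelled-rectangle-recurrence}. By the ceiling formula there, it suffices to prove
\[
 T^{(k+1)}_{k+1,n-k-1}\le T^{(k)}_{k,n-k}+\ell
 \qquad\text{and}\qquad
 T^{(k)}_{k,n-k}\le T^{(k+1)}_{k+1,n-k-1}+\ell ,
\]
since adding one period raises \(\lceil T/\ell\rceil\) by at most one. Each inequality will come from the final assertion of that lemma. We exhibit an admissible assignment \(\widehat T\) on one rectangle, built from the true completion times on the other, shifted by one period. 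Periodicity, \(q_{r+\ell}=q_r\), guarantees that a shifted occurrence is still an occurrence of the same bond. Completeness of \(w\) gives the key fact \(N_d(s)\le s+\ell\) for every bond \(d\) and every \(s\).

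\emph{First inequality.} The labels satisfy \(\lambda_{k+1}(p+1,b)=k-p+b=\lambda_k(p,b)\). So rows \(2,\dots,k+1\) of \(\mathcal R_{k+1}\) are a copy of \(\mathcal R_k\) with its last column deleted, and row \(1\) is extra. For \(p\ge1\) we set \(\widehat T_{p+1,b}=T^{(k)}_{p,b}+\ell\), and on the top row we use the true values \(\widehat T_{1,b}=T^{(k+1)}_{1,b}\). Strict increase along the copied part is inherited from \(T^{(k)}\).

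The only new check is \(T^{(k+1)}_{1,b}<T^{(k)}_{1,b}+\ell\). Cell \((1,b)\) of \(\mathcal R_{k+1}\) has bond \(k+b=\lambda_k(1,b+1)\). The row \(b\mapsto T^{(k)}_{1,b+1}\) is therefore an admissible assignment for the one-row greedy recurrence defining the top row of \(\mathcal R_{k+1}\), which gives \(T^{(k+1)}_{1,b}\le T^{(k)}_{1,b+1}\). Moreover \(T^{(k)}_{1,b+1}=N(T^{(k)}_{1,b})<T^{(k)}_{1,b}+\ell\), because the next occurrence of a bond comes within one period; this strict bound needs a short separate argument, given below. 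Lemma~\ref{lem:labelled-rectangle-recurrence} then yields
\[
 T^{(k+1)}_{k+1,n-k-1}\le T^{(k)}_{k,n-k-1}+\ell\le T^{(k)}_{k,n-k}+\ell .
\]

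\emph{Second inequality.} This is the mirror construction with the roles of rows and columns exchanged. Here \(\lambda_k(p,b+1)=\lambda_{k+1}(p,b)\), so columns \(2,\dots,n-k\) of \(\mathcal R_k\) copy \(\mathcal R_{k+1}\) with its last row deleted, and column \(1\) is extra. Set \(\widehat T_{p,b+1}=T^{(k+1)}_{p,b}+\ell\) and \(\widehat T_{p,1}=T^{(k)}_{p,1}\). The needed bound is \(T^{(k)}_{p,1}\le T^{(k+1)}_{p+1,1}<T^{(k+1)}_{p,1}+\ell\). It uses that cell \((p,1)\) of \(\mathcal R_k\) has the bond of cell \((p+1,1)\) of \(\mathcal R_{k+1}\), together with the same within-one-period property.

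The main obstacle is bookkeeping rather than any conceptual difficulty. We must verify that the boundary row or column together with the shifted block really meets both predecessor conditions, and that no off-by-one error enters the "strictly less than \(s+\ell\)" step. That step requires \(N_d(s)\le s+\ell\) with strict improvement over the predecessor's own time, and it holds because consecutive cells in a boundary row carry distinct bonds \(d\ne d'\). Concretely, if the cell at time \(s\) had bond \(d'\) and the next cell has bond \(d\ne d'\), then the next occurrence of \(d\) after \(s\) falls strictly before \(s+\ell\), since \(q_{s+\ell}=d'\ne d\).
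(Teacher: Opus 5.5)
Your proposal is correct and follows the paper's proof. You build the same admissible filling of \(\mathcal R_{k+1}\), with lower block \(T^{(k)}_{p,b}+\ell\) and a top row that you bound by the paper's choice \(\widehat S_{1,b}=T^{(k)}_{1,b+1}\), and you close the boundary with the same one-period, distinct-bond argument. The only difference is that you get \(t_k(w)\le t_{k+1}(w)+1\) from the transposed construction on \(\mathcal R_k\) directly, whereas the paper applies the first inequality to the reflected word \(w^*=(n-c_1,\ldots,n-c_\ell)\) using \(t_k(w)=t_{n-k}(w^*)\); both routes work.
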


\begin{proof}
	Put \(h=n-k\), let \(T_{p,b}=T^{(k)}_{p,b}\), and write
	\(m=t_k(w)\).  We construct an admissible filling of the
	\((k+1)\)-by-\((h-1)\) rectangle from the \(k\)-by-\(h\) filling:
	\[
	 \widehat S_{1,b}=T_{1,b+1},
	 \qquad
	 \widehat S_{p,b}=T_{p-1,b}+\ell
	 \quad(2\le p\le k+1).
	\]
	The bond labels agree, because
	\[
	 \lambda_{k+1}(1,b)=\lambda_k(1,b+1),
	 \qquad
	 \lambda_{k+1}(p,b)=\lambda_k(p-1,b).
	\]
	All predecessor inequalities inside the first row and inside the lower
	block are inherited from \(\mathcal R_k\).  Along their common boundary,
	\[
	 T_{1,b+1}<T_{1,b}+\ell.
	\]
	Indeed, \(T_{1,b+1}\) is the first occurrence of the next bond after
	\(T_{1,b}\); completeness supplies such an occurrence within one period,
	and equality is impossible because the two bonds are distinct.
	Thus the displayed assignment is admissible in the sense of
	Lemma~\ref{lem:labelled-rectangle-recurrence}.  Its largest entry is at
	most \((m+1)\ell\), so
	\begin{equation}
	\label{eq:one-sided-neighbor}
	 t_{k+1}(w)\le t_k(w)+1.
	\end{equation}

	Reflecting the positions changes \(w\) into
	\[
	 w^*=(n-c_1,\ldots,n-c_\ell)
	\]
	and gives \(t_k(w)=t_{n-k}(w^*)\).  Applying
	\eqref{eq:one-sided-neighbor} to \(w^*\) at order \(n-k-1\) yields the
	reverse inequality.
\end{proof}

\begin{theorem}
	\label{thm:neighboring-orders}
	Every \(n\times n\) oscillatory matrix satisfies
	\[
	|e_{k+1}(A)-e_k(A)|\le1,
	\qquad 1\le k\le n-2.
	\]
\end{theorem}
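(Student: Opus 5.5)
The plan is to reduce the statement to the one-sided word comparison in Lemma~\ref{lem:one-period-interlacing}, using the word description of the profile from Lemma~\ref{lem:greedy-reachability}.

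\textbf{Reduction to words.} Fix an adjacent bidiagonal factorization \(A=LDU\) of the oscillatory matrix \(A\), with zero-parameter factors deleted. Record the upper position sequence \(Q_+\) of \(U\) and the upper position sequence \(Q_-\) of \(L^T\). By Lemma~\ref{lem:greedy-reachability}, both sequences are complete, and
\[
 e_j(A)=\max\{t_j(Q_+),t_j(Q_-)\}
\]
for every \(1\le j<n\). Since \(1\le k\le n-2\), both \(k\) and \(k+1\) lie in the range \(1,\ldots,n-1\) where this formula holds. The order \(n\), whose exponent is always one, never enters the comparison.

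\textbf{Applying the interlacing lemma.} Completeness is exactly the hypothesis needed for \(w=Q_+\) and \(w=Q_-\) in Lemma~\ref{lem:one-period-interlacing}. For each \(\sigma\in\{+,-\}\) that lemma gives
\[
 t_{k+1}(Q_\sigma)\le t_k(Q_\sigma)+1
 \qquad\text{and}\qquad
 t_k(Q_\sigma)\le t_{k+1}(Q_\sigma)+1 .
\]

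\textbf{Passing to the maximum.} Taking the maximum over \(\sigma\) on both sides of the first inequality gives \(e_{k+1}(A)\le e_k(A)+1\). The second inequality gives \(e_k(A)\le e_{k+1}(A)+1\) in the same way. Here we use that the maximum of two quantities, each of which is at most the corresponding term plus one, is at most the maximum of those terms plus one. Together these prove \(|e_{k+1}(A)-e_k(A)|\le1\).

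\textbf{Where the difficulty lies.} The real work, namely the admissible filling construction and the reflection argument, has already been done in Lemma~\ref{lem:one-period-interlacing}. What remains is bookkeeping: Lemma~\ref{lem:greedy-reachability} must legitimately apply to both corner thresholds, with the lower corner handled by transposition so that it too becomes a \(+\)-pass problem. The other point to confirm is that the ceiling formula for \(t_k\) in Lemma~\ref{lem:labelled-rectangle-recurrence} uses the same period length \(\ell\) at orders \(k\) and \(k+1\). It does, because the word \(w\) is the same at both orders.
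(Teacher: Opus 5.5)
Your proposal is correct and follows the paper's proof. Both apply Lemma~\ref{lem:one-period-interlacing} to the two complete one-sided words from Lemma~\ref{lem:greedy-reachability}, then use that a pointwise maximum of two sequences whose neighboring terms differ by at most one has the same property; the only cosmetic difference is that you quote \eqref{eq:word-profile} directly, whereas the paper passes through the corner thresholds and Theorem~\ref{thm:exact-corner-formula}.
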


\begin{proof}
	Write an adjacent bidiagonal factorization \(A=LDU\).  Delete the zero-multiplier upper
	factors and record the bond labels of the remaining factors, in their
	factor order, as a word \(w_U\).  The proof of
	Theorem~\ref{thm:universal-fixed-order-bound} shows that every bond occurs
	in \(w_U\).  Define \(w_{L^T}\) analogously from \(L^T\).

	Lemma~\ref{lem:greedy-reachability} gives
	\[
	\tau_k^{\mathrm{UR}}(A)=t_k(w_U),
	\qquad
	\tau_k^{\mathrm{LL}}(A)=t_k(w_{L^T}).
	\]
	Adjacent terms of both sequences differ by at most one, by
	Lemma~\ref{lem:one-period-interlacing}.  Theorem~\ref{thm:exact-corner-formula}
	gives
	\[
	e_k(A)=\max\{t_k(w_U),t_k(w_{L^T})\}.
	\]
	If adjacent terms of two sequences differ by at most one, the same is true
	of their pointwise maximum.  This proves the result.
\end{proof}

The same labelled rectangle compares any two orders directly.  Unlike
repeated use of the neighboring-order inequality, the resulting loss is
controlled by a single ratio of row counts or column counts.

\begin{theorem}
	\label{thm:all-order-comparison}
	Let \(A\) be an \(n\times n\) oscillatory matrix.  For every
	\(1\le j,k\le n-1\),
	\[
	e_j(A)\le Q_{j,k}e_k(A),
	\]
	where
	\[
	Q_{j,k}
	=
	\left\lceil
	\max\left\{\frac{j}{k},\frac{n-j}{n-k}\right\}
	\right\rceil
	=
	\begin{cases}
	\displaystyle
	\left\lceil\dfrac{n-j}{n-k}\right\rceil,&j\le k,\\[6pt]
	\displaystyle
	\left\lceil\dfrac{j}{k}\right\rceil,&j\ge k.
	\end{cases}
	\]
	More generally, every complete adjacent factor sequence \(w\) satisfies
	\[
		t_j(w)\le Q_{j,k}t_k(w)
		\qquad(1\le j,k\le n-1).
	\]
\end{theorem}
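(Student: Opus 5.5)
We first reduce the matrix inequality to the word inequality, as in Theorem~\ref{thm:neighboring-orders}. Take an adjacent bidiagonal factorization \(A=LDU\) and the complete words \(w_U,w_{L^T}\). By Lemma~\ref{lem:greedy-reachability} and Theorem~\ref{thm:exact-corner-formula}, \(e_k(A)=\max\{t_k(w_U),t_k(w_{L^T})\}\). If \(t_j(w)\le Q_{j,k}t_k(w)\) holds for both words, then the maximum gives \(e_j(A)\le Q_{j,k}e_k(A)\).

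We also reduce to the case \(j\ge k\). The reflection \(w\mapsto w^*\) used in Lemma~\ref{lem:one-period-interlacing} satisfies \(t_k(w)=t_{n-k}(w^*)\), and \(Q_{j,k}\) becomes \(Q_{n-j,n-k}\) under \((j,k)\mapsto(n-j,n-k)\). So we assume \(j\ge k\) and put \(q=\lceil j/k\rceil\). Then \(R_j\) has \(j\le qk\) rows and \(n-j\le n-k\) columns.

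Write \(m=t_k(w)\) and \(T=T^{(k)}\). Then every entry of \(T\) is at most \(m\ell\), and \(T+rm\ell\) is again an occurrence of the same bond, since \(w\) is repeated with period \(\ell\). We build an admissible filling \(\widehat S\) of \(\mathcal R_j\) in the sense of Lemma~\ref{lem:labelled-rectangle-recurrence} from \(q\) translated copies of the filling \(T\).

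The \(r\)-th copy, \(r=0,\ldots,q-1\), sends cell \((p',b')\) of \(\mathcal R_k\) to \((p'+u_r,\,b'+v_r)\) in \(\mathcal R_j\) and carries the value \(T_{p',b'}+rm\ell\). The bond label is preserved exactly when \(v_r-u_r=k-j\), since \(\lambda_j(p'+u,b'+v)=\lambda_k(p',b')+(j-k)+(v-u)\). The offsets \(u_r\) increase in steps of at most \(k\), and the row blocks \(u_r+1,\ldots,u_r+k\) overlap as needed so that the \(q\) blocks cover all \(j\) rows. Each cell of \(\mathcal R_j\) is assigned by the copy of smallest \(r\) that contains it. The column offsets \(v_r\) are then forced. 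The inequality \(n-j\le n-k\) is what should let every column of \(\mathcal R_j\) fall inside some copy's column range \([1+v_r,\,n-k+v_r]\). This generalizes Lemma~\ref{lem:one-period-interlacing}, where \(q=2\), the top row comes from a column-shifted copy, and the lower rows come from a copy shifted by \(\ell\). The only change is that the time offset is now a full \(m\ell\) rather than \(\ell\).

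Admissibility is checked in two parts. Inside one copy, the predecessor inequalities are inherited from \(T\). Across copies, a predecessor \((p-1,b)\) or \((p,b-1)\) lying in an earlier copy \(r'<r\) has value at most \((r'+1)m\ell\le rm\ell\). The cell itself has value at least \(rm\ell+1\), because every entry of \(T\) is positive. Thus the strict inequality holds automatically, with no need for the one-period argument \(T_{1,b+1}<T_{1,b}+\ell\).

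The largest value used is at most \(qm\ell\), so Lemma~\ref{lem:labelled-rectangle-recurrence} gives \(T^{(j)}_{j,n-j}\le qm\ell\), hence \(t_j(w)\le qm=Q_{j,k}t_k(w)\).

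The main obstacle is the geometric covering step. The \(q\) label-preserving translates of the \(k\times(n-k)\) rectangle must cover \(\mathcal R_j\), and they must be ordered so that every predecessor of a cell lies in the same or an earlier copy. A predecessor in a later copy would carry a larger value and break admissibility. I would first try \(u_r=\min\{rk,\,j-k\}\), which gives \(v_r=u_r+k-j\le0\), and check whether the columns \(1,\ldots,n-j\) lie in every copy's range. If that fails, I would instead index the copies along antidiagonal bands of constant label, allowing the copies to overlap.
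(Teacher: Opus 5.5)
Your construction is the paper's own proof: with \(a_s=(s-1)k\), the paper's offsets \(r_s=\max\{a_s-\delta,0\}\), \(u_s=\max\{\delta-a_s,0\}\) invert to exactly your first-try choice \(u_r=\min\{rk,j-k\}\), \(v_r=u_r+k-j\), and the paper uses the same \((s-1)m\ell\) time shifts and the same slab-boundary argument; the one difference is that you treat \(j\le k\) by the reflection \(w\mapsto w^*\), whereas the paper builds column slabs directly. The covering step you left unchecked does go through, because \(0\le u_r\le j-k\) gives \(k-j\le v_r\le 0\), so every copy's column range \([1+v_r,\,n-k+v_r]\) contains \([1,n-j]\), while \(u_r+k=\min\{(r+1)k,j\}\) ensures that row \(p\) is assigned to copy \(\lfloor (p-1)/k\rfloor\) and that no predecessor ever lies in a later copy.
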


\begin{proof}
	We first prove the stated one-sided assertion.  Let
	\(w=(c_1,\ldots,c_\ell)\) be a complete word on the bonds
	\(1,\ldots,n-1\), and put
	\[
	m=t_k(w),
	\qquad
	h=n-k,
	\]
	and let \(T_{p,b}=T^{(k)}_{p,b}\) be the cell-completion numbers in
	\(\mathcal R_k\).  Every cell precedes the southeast cell, and the
	rectangle is completed in \(m\) passes.  Hence
	\begin{equation}
	\label{eq:crossing-window}
	T_{p,b}\le m\ell
	\qquad
	(1\le p\le k,\ 1\le b\le h).
	\end{equation}
	We shall partition \(\mathcal R_j\) into slabs and embed each slab into
	\(\mathcal R_k\), preserving both bond labels and predecessor relations.
	After successive period shifts, the resulting cell numbers are admissible
	in the sense of Lemma~\ref{lem:labelled-rectangle-recurrence}.

	Suppose first that \(j\le k\).  Put
	\[
	H=n-j,
	\qquad
	\delta=k-j=H-h,
	\qquad
	q=\left\lceil\frac{H}{h}\right\rceil.
	\]
	Partition the column indices \(1,\ldots,H\) into \(q\) consecutive blocks
	\[
	B_s=\{a_s+1,\ldots,a_s+d_s\},
	\qquad d_s\le h.
	\]
	For each block set
	\[
	r_s=\max\{\delta-a_s,0\},
	\qquad
	u_s=\max\{a_s-\delta,0\}.
	\]
	Then
	\[
	0\le r_s\le k-j,
	\qquad
	0\le u_s\le h-d_s,
	\qquad
	r_s-u_s=\delta-a_s.
	\]
	Thus the map
	\[
	(p,a_s+b)\longmapsto(p+r_s,b+u_s),
	\qquad 1\le p\le j,\quad1\le b\le d_s,
	\]
	embeds this slab into \(\mathcal R_k\) and preserves its bond labels,
	because
	\[
	k-(p+r_s)+(b+u_s)=j-p+(a_s+b).
	\]
	Assign to the target cell \((p,a_s+b)\) the occurrence number
	\[
	\widehat T_{p,a_s+b}
	=(s-1)m\ell+T_{p+r_s,b+u_s}.
	\]
	Within a block these assigned numbers increase along both predecessor
	relations.  At a block boundary, every
	assigned number in the preceding block is at most \((s-1)m\ell\), whereas
	every assigned number in the new block is larger than \((s-1)m\ell\).
	Periodicity preserves the required bond label.  Hence the assigned numbers
	form an admissible filling.  The labelled-rectangle recurrence shows that
	the actual \(j\)-set motion finishes no later, and therefore
	\begin{equation}
	\label{eq:one-sided-all-order-left}
	t_j(w)
	\le
	\left\lceil\frac{n-j}{n-k}\right\rceil t_k(w)
	\qquad(j\le k).
	\end{equation}

	Now suppose that \(j\ge k\).  Put
	\[
	H=n-j,
	\qquad
	\delta=j-k=h-H,
	\qquad
	q=\left\lceil\frac{j}{k}\right\rceil,
	\]
	and partition the row indices into consecutive blocks
	\[
	P_s=\{a_s+1,\ldots,a_s+d_s\},
	\qquad d_s\le k.
	\]
	Set
	\[
	r_s=\max\{a_s-\delta,0\},
	\qquad
	u_s=\max\{\delta-a_s,0\}.
	\]
	Now
	\[
	0\le r_s\le k-d_s,
	\qquad
	0\le u_s\le h-H,
	\qquad
	u_s-r_s=\delta-a_s.
	\]
	The map
	\[
	(a_s+p,b)\longmapsto(p+r_s,b+u_s),
	\qquad1\le p\le d_s,\quad1\le b\le H,
	\]
	again preserves bond labels:
	\[
	k-(p+r_s)+(b+u_s)=j-(a_s+p)+b.
	\]
	Assign the corresponding cell number, shifted by \((s-1)m\ell\), to
	each target cell in \(P_s\).  The same argument, now with row-slab
	boundaries in place of column-slab boundaries, yields
	\begin{equation}
	\label{eq:one-sided-all-order-right}
	t_j(w)
	\le
	\left\lceil\frac{j}{k}\right\rceil t_k(w)
	\qquad(j\ge k).
	\end{equation}

	Finally apply \eqref{eq:one-sided-all-order-left} and
	\eqref{eq:one-sided-all-order-right} to the positive-factor words \(w_U\)
	and \(w_{L^T}\) in an adjacent bidiagonal factorization \(A=LDU\).
	Lemma~\ref{lem:greedy-reachability} and
	Theorem~\ref{thm:exact-corner-formula} give
	\[
	e_r(A)=\max\{t_r(w_U),t_r(w_{L^T})\}.
	\]
	Taking coordinatewise maxima preserves the two inequalities and proves the
	result.
\end{proof}

\begin{remark}
	Taking \(j=1\) and \(j=n-1\) gives
	\[
	e_1(A)\le
	\left\lceil\frac{n-1}{n-k}\right\rceil e_k(A),
	\qquad
	e_{n-1}(A)\le
	\left\lceil\frac{n-1}{k}\right\rceil e_k(A).
	\]
	In particular,
	\[
	e_1(A)\le2e_k(A)
	\quad\left(2\le k\le\frac{n+1}{2}\right),
	\qquad
	e_{n-1}(A)\le2e_k(A)
	\quad\left(\frac{n-1}{2}\le k\le n-2\right).
	\]
	The symmetric family in Theorem~\ref{thm:all-order-sharpness} below
	attains both branches simultaneously, and therefore also attains the
	factor two whenever the displayed ceiling is two.

	Fallat and Liu obtained related exact identities among one-sided step
	counts for their class of basic oscillatory matrices
	\cite[Lemmas~13 and~15]{FallatLiu}.  The block embedding above applies to
	every complete factor word and hence to arbitrary oscillatory matrices.
\end{remark}

For each reference order, one symmetric matrix attains all the constants in
Theorem~\ref{thm:all-order-comparison} simultaneously.  For a word
\(w=(i_1,\ldots,i_s)\) on the bonds \(1,\ldots,n-1\), write
\[
U(w)=\prod_{r=1}^s(I+E_{i_r,i_r+1}).
\]

\begin{theorem}
	\label{thm:all-order-sharpness}
	Fix \(1\le k\le n-1\), and let
	\[
	\omega_k
	=
	(k,k-1,\ldots,1)
	(k+1,k,\ldots,2)\cdots
	(n-1,n-2,\ldots,n-k).
	\]
	Then the symmetric oscillatory matrix
	\[
	A_k=U(\omega_k)^TU(\omega_k)
	\]
	satisfies \(e_k(A_k)=1\) and, simultaneously for every
	\(1\le j\le n-1\),
	\[
	e_j(A_k)
	=
	Q_{j,k}
	=
	\left\lceil
	\max\left\{\frac{j}{k},\frac{n-j}{n-k}\right\}
	\right\rceil.
	\]
	Thus every constant in Theorem~\ref{thm:all-order-comparison} is optimal,
	already within the symmetric oscillatory class.
\end{theorem}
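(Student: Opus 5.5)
Since \(A_k=U(\omega_k)^TU(\omega_k)\), the adjacent bidiagonal factorization has \(L=U^T\), \(D=I\), \(U=U(\omega_k)\). The upper word is \(\omega_k\), and the upper word of \(L^T\) is again \(\omega_k\). Every bond occurs in \(\omega_k\), and \(U(\omega_k)\) is nonsingular TN, so \(A_k\) is oscillatory by Theorem~\ref{thm:oscillatory-criterion}. Lemma~\ref{lem:greedy-reachability} then gives \(e_j(A_k)=t_j(\omega_k)\) for every \(j\). The upper bound \(t_j(\omega_k)\le Q_{j,k}t_k(\omega_k)\) comes from Theorem~\ref{thm:all-order-comparison}. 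It therefore suffices to prove two things: \(t_k(\omega_k)=1\), and \(t_j(\omega_k)\ge Q_{j,k}\) for every \(j\).

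\emph{Step 1: \(t_k(\omega_k)=1\).} I would track \(L_k\) through one pass directly. The \(b\)-th block of \(\omega_k\) is \((k+b-1,\ldots,b)\). Inductively, before block \(b\) the state is \(\{b,\ldots,b+k-1\}\). Bond \(k+b-1\) moves \(b+k-1\) to \(b+k\), then bond \(k+b-2\) moves \(b+k-2\) up, and so on down to bond \(b\). The state becomes \(\{b+1,\ldots,b+k\}\), and after block \(n-k\) it is \(R_k\). In rectangle language, block \(b\) completes column \(b\) of \(\mathcal R_k\), from \(p=1\) to \(p=k\), in the correct order.

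\emph{Step 2: lower bound for \(j\le k\).} I expect this to be the main obstacle. In \(\mathcal R_j\) (with \(j\) rows and \(n-j\) columns), I would show that each pass completes at most \(n-k\) columns. The idea is that within one pass, cell \((p,b)\) has bond \(j-p+b\). The word \(\omega_k\) consists of descending runs, each covering a window of \(k\) consecutive bonds, and the windows shift up by one. A single pass can complete a new column \(b\) of \(\mathcal R_j\) only by using a descending run that reaches bond \(b\) after the bonds for column \(b-1\) have already been used.

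I would make this precise with a potential. Let the potential be the position of the largest selected index at the start of the pass, and show it increases by at most \(n-k\) per pass. Each run ends with its smallest bond, so the top particle of the \(j\)-set can advance only through bonds lying in the first run that follows its previous advance. Since the top particle must travel from \(j\) to \(n\), a distance of \(n-j\), this gives \(t_j\ge\lceil (n-j)/(n-k)\rceil\).

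A cleaner alternative is to compute \(T^{(j)}\) via recurrence~\eqref{eq:crossing-recurrence}, treating the pass index as a longest-path count. I would exhibit a lattice path from \((1,1)\) to \((j,n-j)\) that forces a new pass after every \(n-k\) column steps. A column step \(b\to b+1\) raises the bond by one. In \(\omega_k\), an occurrence of bond \(d+1\) after an occurrence of \(d\) requires moving to a later block. Since the blocks hold \(k\) bonds and there are \(n-k\) of them, one tracks block indices and sees that \(n-k\) increases exhaust the blocks within a pass. I would do the bookkeeping along the top row \(p=1\), using the bonds \(j,\ldots,n-1\). Bond \(d\) lies in blocks \(b\) with \(d-k+1\le b\le d\), and ascending through bonds forces the block index up by at least one each time. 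So a pass supplies at most \(n-k\) successive top-row cells, which again gives \(\lceil (n-j)/(n-k)\rceil\).

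\emph{Step 3: \(j\ge k\), and putting the bounds together.} Here I would use the reflection \(w\mapsto w^*\), under which \(t_j(w)=t_{n-j}(w^*)\). A direct check of rows gives the analogue: the \(p\)-index (a descending chain of bonds within a block) can advance at most \(k\) per pass, because each block has length \(k\). Along the left column \(b=1\) of \(\mathcal R_j\), the bonds descend \(j,j-1,\ldots,1\), and a single block contains at most \(k\) consecutive descending bonds, while block \(b\) is never revisited within a pass. This gives \(t_j\ge\lceil j/k\rceil\). Taking both lower bounds gives \(t_j(\omega_k)\ge Q_{j,k}\). Combined with Theorem~\ref{thm:all-order-comparison} and \(t_k=1\), this gives equality. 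Symmetry of \(A_k\) is immediate, which yields the final sentence of the statement.
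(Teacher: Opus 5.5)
Your plan is the paper's plan: both one-sided words of $A_k$ are $\omega_k$, one pass carries $L_k$ to $R_k$ block by block, the upper bound comes from Theorem~\ref{thm:all-order-comparison}, and the lower bounds come from bounding how many cells of one row, respectively one column, of $\mathcal R_j$ a single pass can complete. Steps~1 and~2 are correct. Your top-row count is the paper's row argument specialized to row~$1$. The decisive reason is the one in your ``cleaner alternative'': inside a block, $d+1$ precedes $d$, so each step of an ascending consecutive-label chain moves to a strictly later block.

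Step~3, however, is not justified by what you wrote. You cite two facts: each block has length $k$, and no block is revisited in a pass. These do not bound a descending chain that passes from one block to a later one. The word $(4,3)(2,1)$ satisfies both with $k=2$, yet contains the chain $4,3,2,1$. Nor do descending chains in $\omega_k$ automatically stay in one block. For $n=6$, $k=3$, in $\omega_3=(3,2,1)(4,3,2)(5,4,3)$ the label $4$ of block~$2$ is later followed by the label $3$ of block~$3$.

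The missing observation is that block $b$ consists exactly of the labels $b,\ldots,b+k-1$, so the smallest labels of the blocks increase. Take a descending consecutive-label chain whose first label lies in block $b$. It starts at a label at most $b+k-1$. Every later label lies in a block $b'\ge b$ and is therefore at least $b$. Hence the chain has length at most $k$. This is the paper's argument, stated there after reversing the word, where the blocks become increasing intervals with decreasing initial labels.

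Alternatively, your reflection remark can be made to work, but only with a step you did not supply. The word $\omega_k^*$ is not $\omega_{n-k}$, yet both produce the permutation $(n-k+1,\ldots,n,1,\ldots,n-k)$. So Lemma~\ref{lem:three-adjacent-identities} gives $t_j(\omega_k)=t_{n-j}(\omega_k^*)=t_{n-j}(\omega_{n-k})$, and Step~2 applied to $\omega_{n-k}$ at order $n-j$ yields $\lceil j/k\rceil$. As written, the reflection plays no role in your Step~3, and the column bound rests on reasons that do not imply it.
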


\begin{proof}
	Every bond occurs in \(\omega_k\).  Hence \(U(\omega_k)\) is nonsingular
	TN with a positive first superdiagonal, and
	\(A_k=U(\omega_k)^TU(\omega_k)\) is nonsingular TN with both first
	off-diagonals positive.  Theorem~\ref{thm:oscillatory-criterion}
	therefore shows that \(A_k\) is oscillatory.

	One \(\omega_k\)-pass carries \(L_k\) to \(R_k\): in block \(b\), the labels
	\(k+b-1,\ldots,b\) complete, in order, the cells
	\((1,b),\ldots,(k,b)\).  Thus the successive blocks enumerate the entire
	rectangle \(\mathcal R_k\).  Both one-sided factor words of \(A_k\) are
	\(\omega_k\), so Lemma~\ref{lem:greedy-reachability} gives
	\[
	e_j(A_k)=t_j(\omega_k).
	\]
	In particular, \(e_k(A_k)=1\).
	Theorem~\ref{thm:all-order-comparison}, together with
	\(e_k(A_k)=1\), gives \(e_j(A_k)\le Q_{j,k}\).

	It remains to prove the reverse inequality directly from the labelled
	rectangles.
	Put \(h=n-k\).  An increasing consecutive-label subsequence of
	\(\omega_k\) uses at most one label from each of its \(h\) decreasing
	blocks, while \(1,2,\ldots,h\), chosen from successive blocks, attains
	length \(h\).  Along a fixed row of \(\mathcal R_j\), cell labels are
	successive increasing bonds, so at most \(h\) cells in that row can be
	completed in one pass.  Since the row has \(n-j\) cells,
	\[
	t_j(\omega_k)\ge
	\left\lceil\frac{n-j}{n-k}\right\rceil.
	\]

	The longest decreasing consecutive-label subsequence of \(\omega_k\)
	has length \(k\).  To see this, reverse the word.  Its blocks become the
	increasing intervals
	\[
	(h,\ldots,h+k-1),(h-1,\ldots,h+k-2),\ldots,(1,\ldots,k).
	\]
	One block gives length \(k\).  A consecutive increasing subsequence of
	length \(k+1\) would have to end in a block whose initial label is larger
	than that of the block containing its first label, contrary to the
	decreasing order of the block initials.  Thus at most \(k\) cells in a
	fixed column of \(\mathcal R_j\) can be completed in one pass.  Since the
	column has \(j\) cells,
	\[
	t_j(\omega_k)\ge\left\lceil\frac{j}{k}\right\rceil.
	\]
	The two lower bounds combine to \(e_j(A_k)\ge Q_{j,k}\), completing the
	proof.
\end{proof}

\section{Checkerboard inversion}
\label{sec:checkerboard-duality}

If \(M\) is nonsingular, \(I,J\) have the same cardinality, and
\(\sigma(I)=\sum_{i\in I}i\), Jacobi's identity states that
\begin{equation}
\label{eq:jacobi-complementary-minor}
\det M^{-1}[I,J]
=
(-1)^{\sigma(I)+\sigma(J)}
\frac{\det M[J^c,I^c]}{\det M}.
\end{equation}
See, for example, \cite[Section~1.2]{FallatJohnson}.
Put
\[
 D=\operatorname{diag}(1,-1,1,-1,\ldots),
 \qquad D_{ii}=(-1)^{i-1}.
\]

\begin{definition}\label{def:checkerboard-inverse}
For a nonsingular \(n\times n\) matrix \(A\), its checkerboard inverse is
\[
 A^\vee=DA^{-1}D.
\]
\end{definition}

\begin{theorem}
\label{thm:checkerboard-inverse-duality}
Let \(A\) be an \(n\times n\) nonsingular TN matrix.  Then \(A^\vee\) is
nonsingular TN and
\[
 (A^\vee)^\vee=A.
\]
Moreover, \(A\) is oscillatory if and only if \(A^\vee\) is oscillatory.
If these conditions hold, then
\[
 e_k(A^\vee)=e_{n-k}(A)
 \quad(1\le k<n),
 \qquad e_n(A^\vee)=e_n(A)=1,
\]
and consequently \(e(A^\vee)=e(A)\).
\end{theorem}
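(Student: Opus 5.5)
The plan is to reduce everything to Jacobi's identity \eqref{eq:jacobi-complementary-minor}. Since \(D\) is diagonal with \(D_{ii}=(-1)^{i-1}\), we have
\[
\det(DA^{-1}D)[I,J]=(-1)^{\sigma(I)+\sigma(J)}\det A^{-1}[I,J],
\]
up to the constant factors \((-1)^{-|I|}(-1)^{-|J|}\), which cancel because \(|I|=|J|\). Combining this with Jacobi's identity, the signs cancel and
\[
\Delta_{I,J}(A^\vee)=\frac{\Delta_{J^c,I^c}(A)}{\det A}.
\]
Because \(A\) is nonsingular TN, \(\det A>0\) by Corollary~\ref{lem:principal-minors-positive}. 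Hence every minor of \(A^\vee\) is nonnegative, so \(A^\vee\) is TN, and it is clearly nonsingular. The involution \((A^\vee)^\vee=DADD^{-1}\cdots=D(DA^{-1}D)^{-1}D=DDAD D=A\) follows from \(D^2=I\).

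For the oscillatory equivalence, I will use \((A^\vee)^m=DA^{-m}D=(A^m)^\vee\), which holds since \(D^2=I\). Applying the displayed identity to \(A^m\) gives
\[
\Delta_{I,J}\bigl((A^\vee)^m\bigr)=\Delta_{J^c,I^c}(A^m)/\det A^m.
\]
As \((I,J)\) ranges over all pairs of \(k\)-subsets, \((J^c,I^c)\) ranges bijectively over all pairs of \((n-k)\)-subsets. It follows that all \(k\)-minors of \((A^\vee)^m\) are positive if and only if all \((n-k)\)-minors of \(A^m\) are positive, for \(1\le k<n\). Taking the union over \(k\), \((A^\vee)^m\) is TP exactly when \(A^m\) is TP, because the determinant is positive in both. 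So \(A\) is oscillatory iff \(A^\vee\) is, using the involution for the converse direction.

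The exponent identities now follow directly: \(e_k(A^\vee)=e_{n-k}(A)\) for \(1\le k<n\), while \(e_n=1\) for both matrices since the determinants are positive. Finally, \(e(A^\vee)=\max_k e_k(A^\vee)=e(A)\) by the formula \(e=\max_k e_k\) following Lemma~\ref{lem:fixed-order-persistence}. Alternatively, one could argue through Fallat's criterion, but the power identity makes that unnecessary.

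The only step needing care is the sign bookkeeping in the first paragraph: checking that the \(D\)-conjugation sign \((-1)^{\sigma(I)+\sigma(J)-2k}\) exactly cancels Jacobi's sign. Everything else is formal.
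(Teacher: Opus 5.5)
Your proposal is correct and follows essentially the same route as the paper. The \(D\)-conjugation signs cancel Jacobi's sign, giving \(\Delta_{I,J}(A^\vee)=\Delta_{J^c,I^c}(A)/\det A\), and applying this to \(A^m\) via \((A^\vee)^m=DA^{-m}D\) carries positivity of order-\(k\) minors to order \(n-k\) and TP to TP (your involution line is garbled as written, but the intended computation \(D(DA^{-1}D)^{-1}D=D^2AD^2=A\) is right).
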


\begin{proof}
The signs contributed by the two factors \(D\) cancel the sign in
\eqref{eq:jacobi-complementary-minor}.  Hence, for equal-sized \(I,J\),
\[
 \det A^\vee[I,J]
 =\frac{\det A[J^c,I^c]}{\det A}.
\]
Since \(A\) is nonsingular TN, the right side is nonnegative and
\(\det A>0\).  Thus \(A^\vee\) is TN and nonsingular.  Also
\[
 (A^\vee)^{-1}=DAD,
 \qquad (A^\vee)^\vee=D(A^\vee)^{-1}D=A.
\]

For every \(m\ge1\), \((A^\vee)^m=DA^{-m}D\).  Applying the preceding
minor formula to \(A^m\) gives
\[
 \det\bigl((A^\vee)^m[I,J]\bigr)
 =
 \frac{\det\bigl((A^m)[J^c,I^c]\bigr)}{\det(A^m)}.
\]
Thus \((A^\vee)^m\) is TP if and only if \(A^m\) is TP, and all its
\(k\)-minors are positive if and only if all \((n-k)\)-minors of \(A^m\)
are positive.  This proves the assertions about oscillation and exponents.
The determinant exponents equal one because both matrices are nonsingular.
\end{proof}

The corner thresholds also reverse.  Indeed,
\(R_r^c=L_{n-r}\), \(L_r^c=R_{n-r}\), and Jacobi's identity give
\[
\det\bigl((A^\vee)^m[L_r,R_r]\bigr)
=
\frac{\det\bigl((A^m)[L_{n-r},R_{n-r}]\bigr)}{\det(A^m)},
\]
and the analogous identity with the two corners reversed.  Since
\(\det(A^m)>0\),
\[
\tau_r^{\mathrm{UR}}(A^\vee)=\tau_{n-r}^{\mathrm{UR}}(A),
\qquad
\tau_r^{\mathrm{LL}}(A^\vee)=\tau_{n-r}^{\mathrm{LL}}(A),
\]
so \(\tau_r(A^\vee)=\tau_{n-r}(A)\).  Moreover,
\[
(A^q)^\vee=(A^\vee)^q,
\]
and the power-scaling and profile-reversal formulas give
\[
e_k\bigl((A^q)^\vee\bigr)
=
\left\lceil\frac{e_{n-k}(A)}q\right\rceil.
\]
In particular, \(A^\vee=A\) forces \(e_k(A)=e_{n-k}(A)\).

\section{Adjacent factor patterns and permutation ranks}
\label{sec:factor-patterns}

The preceding results determine the profile from two adjacent factor
sequences.  Each sequence can be replaced by one permutation without
changing its action on any index set.  The ranks of southwest rectangles in the
permutation matrix will then determine every exponent threshold.

Associate the following permutation with one pass.

\begin{definition}[Permutation attached to a factor sequence]
Let \(Q=(q_1,\ldots,q_s)\).  Begin with the ordered list
\(z_Q^{(0)}=(1,2,\ldots,n)\).  At position \(q_r\), interchange the two
entries if the left entry is smaller.  After one pass write
\(\sigma_Q=z_Q^{(1)}\); after \(m\) passes write \(z_Q^{(m)}\).
\end{definition}

\begin{example}
For \(n=4\) and \(Q=(2,1,3,2,2)\), the labelled list evolves as
\[
 1234\longmapsto1324\longmapsto3124\longmapsto3142
 \longmapsto3412\longmapsto3412.
\]
Thus \(\sigma_Q=(3,4,1,2)\).  For \(k=2\), the labels \(1,2\) finish in
the last two positions after one pass, so \(t_2(Q)=1\).  The final
ineffective comparison illustrates why a long factor sequence can be
recorded by a shorter permutation.
\end{example}

\begin{theorem}
\label{thm:label-sorting}
For every \(1\le k<n\),
\begin{equation}
\label{eq:threshold-label-set}
 c_{Q^m}^+(L_k)=\{r:(z_Q^{(m)})_r\le k\}.
\end{equation}
Consequently,
\[
 t_k(Q)=
 \min\left\{m\ge1:
 \{(z_Q^{(m)})_{n-k+1},\ldots,(z_Q^{(m)})_n\}=[k]
 \right\}.
\]
If \(Q\) is complete, every \(t_k(Q)\) is at most \(n-1\).
\end{theorem}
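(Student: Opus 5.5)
The plan is to prove \eqref{eq:threshold-label-set} by induction on the number of individual comparisons performed, not on the number of passes. Write \(z\) for the current list and set
\[
S(z)=\{r:z_r\le k\}.
\]
Initially \(z=(1,\ldots,n)\), so \(S(z)=L_k\). Concatenating \(m\) passes is the same as running the word \(Q^m\) one letter at a time. So it suffices to show that a single comparison at bond \(a\) changes \(S(z)\) exactly as \(c_a^+\) does.

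This one-step claim is a four-case check on the pair \((z_a,z_{a+1})\).
\begin{itemize}
\item If \(a,a+1\) both lie in \(S\), or both lie outside \(S\), then interchanging the two entries or not leaves \(S\) unchanged. Also \(c_a^+(S)=S\) in this case.
\item If \(a\in S\) and \(a+1\notin S\), then \(z_a\le k<z_{a+1}\). The left entry is smaller, so the interchange is performed, and \(S\) becomes \((S\setminus\{a\})\cup\{a+1\}=c_a^+(S)\).
\item If \(a\notin S\) and \(a+1\in S\), then \(z_a>k\ge z_{a+1}\). No interchange occurs, and \(c_a^+(S)=S\) as well.
\end{itemize}
This is the only real content of the proof. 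The main point to get right is that the sorting rule (swap when the left entry is smaller) matches the legality condition of \(c_a^+\) for every threshold \(k\) simultaneously. That is exactly what lets one permutation list encode all orders at once. Induction over the letters of \(Q^m\) then gives \eqref{eq:threshold-label-set}.

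For the formula for \(t_k(Q)\), recall that \(t_k(Q)\) is the least \(m\) with \(c_{Q^m}^+(L_k)=R_k\). By \eqref{eq:threshold-label-set}, this condition says that the positions holding labels \(\le k\) are exactly \(n-k+1,\ldots,n\). Equivalently, \(\{(z_Q^{(m)})_{n-k+1},\ldots,(z_Q^{(m)})_n\}=[k]\), since both sets have \(k\) elements.

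For the final bound, let \(Q\) be complete. I would argue exactly as in the proof of Theorem~\ref{thm:universal-fixed-order-bound}. Extract from \(Q\) the first occurrence of each bond, in order, to obtain an ordering \(c\) of \(1,\ldots,n-1\). Extra legal moves cannot delay the greedy trajectory, by the admissible-filling comparison in Lemma~\ref{lem:labelled-rectangle-recurrence}. Then Lemma~\ref{lem:comparator-pass}, applied with \(h=n-k\), gives
\[
t_k(Q)\le t_k(c)\le\max\{k,n-k\}\le n-1.
\]
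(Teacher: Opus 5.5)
Your proof is correct and follows the paper's argument. You induct over single exchanges to show that the set of positions holding labels in \([k]\) moves exactly by \(c_a^+\), read off \(t_k\), and bound it via Lemma~\ref{lem:comparator-pass} after keeping the first occurrence of each bond. Your explicit four-case check and your appeal to the admissible-filling comparison (that extra moves cannot delay the target) spell out what the paper states in one line each.
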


\begin{proof}
For a fixed \(k\), retain only the positions occupied by labels in \([k]\).
At an adjacent exchange this set either stays fixed or changes by exactly
the map \(c_i^+\).  Induction over the factors and the repetitions proves
\eqref{eq:threshold-label-set}.  The second assertion says precisely that
these positions have become \(R_k\); the final bound is
Lemma~\ref{lem:comparator-pass}, after retaining the first occurrence of
each adjacent position.  The omitted moves can only advance the selected
positions and therefore cannot delay the target.
\end{proof}

\begin{lemma}
\label{lem:three-adjacent-identities}
Both the adjacent exchanges on labelled lists and the maps \(c_i^+\) on
index sets satisfy
\[
 c_i^+c_i^+=c_i^+,
 \qquad
 c_i^+c_j^+=c_j^+c_i^+\quad(|i-j|>1),
 \qquad
 c_i^+c_{i+1}^+c_i^+
 =c_{i+1}^+c_i^+c_{i+1}^+.
\]
Consequently, the action of \(Q\), on every labelled list and every index
set, is determined by the single permutation \(\sigma_Q\).
\end{lemma}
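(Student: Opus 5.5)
We prove the three identities for the operators \(s_i\) on labelled lists, where \(s_i\) swaps the entries at positions \(i,i+1\) when the left one is smaller. We then transfer them to the maps \(c_i^+\) using the reduction in the proof of Theorem~\ref{thm:label-sorting}, and finally invoke the word problem for the resulting monoid. This monoid is the \(0\)-Hecke (Demazure) monoid of the symmetric group, also called the ``sorting'' monoid.

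Idempotence is immediate. After one application of \(s_i\), the entries at positions \(i,i+1\) are in decreasing order, so a second application does nothing. Commutation for \(|i-j|>1\) holds because the two operators read and alter disjoint pairs of positions. For the braid relation, both sides only touch positions \(i,i+1,i+2\). We check that each side sorts the three entries there into decreasing order.

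\emph{Left side.} After \(s_i\) and then \(s_{i+1}\), the smallest of the three entries is in position \(i+2\). The final \(s_i\) then orders the remaining two.

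\emph{Right side.} The first \(s_{i+1}\) puts the larger of positions \(i+1,i+2\) in position \(i+1\). Then \(s_i\) brings the maximum to position \(i\). The final \(s_{i+1}\) orders the last two.

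So both sides produce the decreasing rearrangement, and the braid relation holds.

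For the index-set maps, identify \(I\in\binom{[n]}k\) with the \(0/1\) list carrying \(1\) at the positions of \(I\). Under this identification, \(c_i^+\) is exactly the same exchange rule applied to that list: a \(1\) followed by a \(0\) becomes \(0,1\), which is the sense in which the selected index moves right. The verification of the three identities never used distinct entries, only comparisons, so it applies verbatim to such lists with ties. Alternatively, as in the proof of Theorem~\ref{thm:label-sorting}, \(c_{Q}^+(L_k)\) is the position set of labels \(\le k\) after applying \(Q\) to a list. For a general \(I\), start from a labelled list whose labels \(\le k\) occupy \(I\). Either route shows that any identity holding for the list operators descends to the \(c_i^+\).

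For the consequence, we use Matsumoto's theorem in its \(0\)-Hecke form. Any monoid generated by elements satisfying these three relations is a quotient of the \(0\)-Hecke monoid \(H_0(S_n)\). Each element of \(H_0(S_n)\) is represented by a reduced word of a unique permutation \(w\), and the relations reduce any word \(Q\) to a reduced word of its Demazure product \(w\).

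It remains to identify \(w\) with \(\sigma_Q\). Apply \(Q\) to the initial list \((1,\dots,n)\). Along a reduced word for \(w\) starting from the identity list, every comparison is effective. So the output is \(w\) read as a list, up to the fixed convention of how \(\sigma_Q\) is recorded. Hence \(\sigma_Q\) determines \(w\), and \(w\) determines the action of \(Q\) on every list and every index set.

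The main obstacle is this last step. We must be careful that the action of the monoid element on the identity list is injective, i.e. that distinct Demazure products give distinct outputs from \((1,\dots,n)\). This follows because along a reduced word each swap creates a new inversion, so the output list is exactly the permutation \(w\). We would verify this by induction on length, using the exchange property.
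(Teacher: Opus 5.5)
Your proof is correct and is essentially the paper's argument. You check the three relations (the braid relation as sorting three entries into decreasing order), pass to index sets through the labels-\(\le k\) projection of Theorem~\ref{thm:label-sorting}, and then use the \(0\)-Hecke form of Matsumoto's theorem, identifying \(\sigma_Q\) with the Demazure product; the paper proves that last fact by an explicit induction (reduced words first, then absorbing each ineffective exchange via \(R'ii=R'i\)), where you simply cite it.

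One small slip: under the rule ``swap if the left entry is smaller,'' your \(0/1\) encoding must put the smaller value on the selected positions. Putting \(1\) on \(I\) realizes \(c_i^-\) rather than \(c_i^+\). Your alternative route through labels \(\le k\) is correct as written.
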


\begin{proof}
The first identity says that a second attempt at the same right shift does
nothing.  The second concerns disjoint pairs of coordinates.  For the third,
only membership of \(i,i+1,i+2\) matters; checking its eight possible
subsets gives the same final subset on both sides.  On a labelled list the
third identity simply sorts three entries into decreasing order.

It is standard that two shortest adjacent-exchange sequences for the same
permutation are connected by the disjoint-pair and three-position identities
\cite[Theorem~3.3.1]{BjornerBrenti}.  We include an elementary induction.  In
a shortest sequence every inverted pair of labels
is exchanged exactly once.  The largest label can only move left.  Starting
with its last exchange and working backward, commute past exchanges on disjoint
pairs; the only three-label obstruction is moved past by the three-position
identity.  In this way the exchanges involving the largest label form the
same suffix in both sequences.  Delete that suffix and apply the
same argument to the other \(n-1\) labels.

Proceed through an arbitrary sequence \(Q\) from left to right.  Inductively
choose a minimum sequence \(R\) for the labelled permutation obtained so far,
with the same index map.  If the next exchange at position \(i\) is
effective, then \(Ri\) is still minimum.  If it is ineffective, the current
permutation has a descent at \(i\).  Swapping that descent first gives a
permutation with one fewer inversion, so a minimum sequence for the current
permutation may be chosen in the form \(R'i\).  By the preceding paragraph
we may replace \(R\) by \(R'i\); after appending the ineffective exchange we
obtain \(R'ii\), whose last repetition is ineffective.  Thus every step
preserves both actions, and the final actions depend only on \(\sigma_Q\).
\end{proof}

For a permutation \(\pi\), let \(S_\pi\) be the common adjacent map, on a
labelled list or an index set, given by any shortest adjacent sequence
producing \(\pi\).  Put
\[
 z_\pi^{(0)}=(1,\ldots,n),
 \qquad
 z_\pi^{(m)}=S_\pi^m(z_\pi^{(0)}).
\]
If \(\pi=\sigma_Q\), then \(z_\pi^{(m)}=z_Q^{(m)}\).  We also write
\[
 t_k(\pi)=
 \min\left\{m\ge1:
 \{(z_\pi^{(m)})_{n-k+1},\ldots,(z_\pi^{(m)})_n\}=[k]\right\},
\]
so \(t_k(Q)=t_k(\sigma_Q)\).

For any permutation \(u=(u_1,\ldots,u_n)\), define its southwest rectangle
counts by
\[
 d_u(p,q)=\#\{i>p:u_i\le q\},
 \qquad0\le p,q\le n.
\]
If \(P_Q\) is the permutation matrix of \(\sigma_Q\), then
\begin{equation}
\label{eq:rank-rectangle}
d_Q(p,q):=d_{\sigma_Q}(p,q)
 =\operatorname{rank}P_Q[\{p+1,\ldots,n\},\{1,\ldots,q\}]
 =\#\{i>p:\sigma_i\le q\}.
\end{equation}
For later repetitions set
\[
 d_\pi^{(m)}(p,q)=d_{z_\pi^{(m)}}(p,q),
 \qquad d_\pi^{(1)}=d_\pi.
\]

\begin{lemma}
\label{lem:rectangle-rank-product}
For permutations \(u,\pi\),
\begin{equation}
\label{eq:rectangle-rank-product}
 d_{S_\pi(u)}(p,q)
 =\min_{0\le t\le n}
 \bigl\{d_\pi(p,t)+d_u(t,q)\bigr\}.
\end{equation}
In particular,
\[
 d_\pi^{(m+1)}(p,q)
 =\min_{0\le t\le n}
 \bigl\{d_\pi(p,t)+d_\pi^{(m)}(t,q)\bigr\}.
\]
\end{lemma}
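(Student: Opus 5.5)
The plan is to reduce \eqref{eq:rectangle-rank-product} to the case of a single adjacent exchange, and then to assemble the general case using associativity of the min-plus product. For functions \(f,g\) on \(\{0,\ldots,n\}^2\), write
\[
 (f\odot g)(p,q)=\min_{0\le t\le n}\{f(p,t)+g(t,q)\}.
\]
This product is associative. The claim then reads \(d_{S_\pi(u)}=d_\pi\odot d_u\).

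First I check the identity case. For the identity permutation \(\iota\) we have \(d_\iota(p,q)=\max\{q-p,0\}\). I will verify directly that \(d_\iota\odot d_u=d_u\) and \(d_u\odot d_\iota=d_u\) for every permutation \(u\). In one direction, taking \(t=p\) (respectively \(t=q\)) gives the inequality \(\le\). In the other direction, I use that \(d_u(t,q)\) changes by at most one when \(t\) changes by one, and changes by at most zero in the relevant direction, so that \(\max\{t-p,0\}+d_u(t,q)\ge d_u(p,q)\). Both inequalities are elementary counting.

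Second, I treat the base case \(\pi=s_i\), the single exchange at position \(i\). Here \(S_{s_i}(u)\) swaps \(u_i\) and \(u_{i+1}\) exactly when \(u_i<u_{i+1}\). In the southwest counts this changes \(d_u(p,q)\) only at \(p=i\). At that row the count increases by one precisely when \(u_i\le q<u_{i+1}\); equivalently, \(d_{S_{s_i}(u)}(i,q)=\max\{d_u(i,q),d_u(i-1,q)\}\). On the other side, \(d_{s_i}\) agrees with \(d_\iota\) except at \((i,i)\), where its value is \(1\) instead of \(0\). Hence \(d_{s_i}\odot d_u\) coincides with \(d_\iota\odot d_u=d_u\) off row \(i\). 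At row \(i\) the minimum is over \(d_\iota(i,t)+d_u(t,q)\) for \(t\ne i\) together with \(1+d_u(i,q)\). I will evaluate this by the same one-step Lipschitz facts: the terms with \(t\le i-1\) contribute at least \(d_u(i-1,q)\), with equality at \(t=i-1\); the terms with \(t\ge i+1\) are at least \(d_u(i,q)\). This should yield exactly \(\max\{d_u(i,q),d_u(i-1,q)\}\) whenever \(d_u(i-1,q)\in\{d_u(i,q),d_u(i,q)+1\}\), and that condition always holds. This small case analysis is the only real computation.

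Third, I induct on the length of a shortest adjacent sequence for \(\pi\). Lemma~\ref{lem:three-adjacent-identities} makes \(S_\pi\) well defined. If \(\pi'\) is obtained from a reduced word for \(\pi\) by appending an effective exchange \(i\), then \(S_{\pi'}=S_{s_i}\circ S_\pi\), since the action is applied left to right along the word. Using the base case, the induction hypothesis, and associativity,
\[
 d_{S_{\pi'}(u)}=d_{s_i}\odot d_{S_\pi(u)}=d_{s_i}\odot(d_\pi\odot d_u)=(d_{s_i}\odot d_\pi)\odot d_u .
\]
Applying the same chain with \(u=\iota\) and using \(d_\pi\odot d_\iota=d_\pi\) gives \(d_{s_i}\odot d_\pi=d_{S_{s_i}(S_\pi\iota)}=d_{\pi'}\). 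This closes the induction. The displayed formula for \(d_\pi^{(m+1)}\) follows by taking \(u=z_\pi^{(m)}\).

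The main obstacle I expect is the order convention. The base case must be stated as \(d_{s_i}\odot d_u\), with the exchange applied after \(u\), and this has to match how \(S_\pi\) composes along words. If the convention turns out to be reversed, I would instead prove the right-multiplication version \(d_u\odot d_{s_i}\) by the column analogue of the same computation.
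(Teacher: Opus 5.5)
Your architecture is sound, and your order convention is the right one. The single-exchange computation, which you correctly single out as the only real content, is wrong on both sides.

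Since \(d_u(i-1,q)=d_u(i,q)+\mathbf{1}[u_i\le q]\ge d_u(i,q)\), your formula \(\max\{d_u(i,q),d_u(i-1,q)\}\) is just \(d_u(i-1,q)\). That is not \(d_{S_{s_i}(u)}(i,q)\): at \(q=n\) the two values are \(n-i+1\) and \(n-i\). Already for \(n=2\), \(u=(1,2)\), \(i=1\), \(q=2\), your formula gives \(2\) while \(S_{s_1}(u)=(2,1)\) gives \(1\).

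Nor is it the row-\(i\) value of \(d_{s_i}\odot d_u\), because your lower bound \(d_u(i,q)\) for the terms with \(t\ge i+1\) is not sharp. For \(t\ge i\) the map \(t\mapsto (t-i)+d_u(t,q)\) is nondecreasing, so those terms are minimized at \(t=i+1\). The \(t=i\) term \(1+d_u(i,q)\) is dominated by that one. Expanding \(d_u(i-1,q)=d_u(i+1,q)+\mathbf{1}[u_i\le q]+\mathbf{1}[u_{i+1}\le q]\) then gives
\[
 (d_{s_i}\odot d_u)(i,q)=\min\{d_u(i-1,q),\,1+d_u(i+1,q)\}
 =d_u(i+1,q)+\mathbf{1}[\min\{u_i,u_{i+1}\}\le q].
\]
This does equal \(d_{S_{s_i}(u)}(i,q)\). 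After the exchange, effective or not, position \(i+1\) holds \(\min\{u_i,u_{i+1}\}\), and later positions are untouched.

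With the base case repaired this way, the rest of your argument goes through. That includes the two identity laws, associativity, and recovering \(d_{\pi'}=d_{s_i}\odot d_\pi\) from \(u=\iota\).

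Once fixed, your proof genuinely differs from the paper's. The paper never isolates a one-exchange identity or uses associativity. Instead it appends an exchange to \(\rho\) and notes that \(d_\rho(i,\cdot)\) rises by one exactly on \([a,b)\). It then expresses \(d_\theta(i\pm1,q)\), for \(\theta=S_\rho(u)\), as perturbed minima via the adjacent-row identities for \(d_\rho\). Finally, an analysis of minimizing cuts shows that \(\min_t\{d_\rho(i,t)+d_u(t,q)\}\) rises exactly on the strip \(\theta_i\le q<\theta_{i+1}\).

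Your route replaces that minimizer bookkeeping with a local computation plus the generic associativity of min-plus products, which is shorter and more transparent. It also yields more. Write \(W(u)\) for the list obtained by applying an arbitrary word \(W\) to \(u\); the same induction gives \(d_{W(u)}=d_{W(\iota)}\odot d_u\). Since a permutation is determined by its table \(d\), the action of \(W\) on lists, and hence on index sets, depends only on \(W(\iota)\). So you would not even need Lemma~\ref{lem:three-adjacent-identities} for the well-definedness of \(S_\pi\).
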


\begin{proof}
For the identity permutation \(\iota\),
\(d_\iota(p,t)=\max\{t-p,0\}\).  Hence the right side of
\eqref{eq:rectangle-rank-product}, with \(\pi=\iota\), equals
\(d_u(p,q)\).  Indeed, choosing \(t=p\) gives equality.  If \(t<p\),
then \(d_u(t,q)\ge d_u(p,q)\); if \(t>p\), deleting \(t-p\) rows
removes at most \(t-p\) counted entries, and
\(d_\iota(p,t)=t-p\) pays for that possible loss.

Now build \(\pi\) by a shortest sequence of adjacent exchanges.  Suppose
the part already built is \(\rho\), put \(\theta=S_\rho(u)\), and append
an exchange at \(i\).  It swaps the increasing pair
\(a=\rho_i<b=\rho_{i+1}\).  Fix \(q\) and write
\[
 A_t=d_\rho(i,t)+d_u(t,q),
 \qquad M=\min_t A_t.
\]
The adjacent-row identities for \(d_\rho\) give
\[
 d_\theta(i-1,q)=\min_t\bigl(A_t+\mathbf1_{\{a\le t\}}\bigr),
 \qquad
 d_\theta(i+1,q)=\min_t\bigl(A_t-\mathbf1_{\{b\le t\}}\bigr).
\]
All quantities are integers.  Comparing these two minima with
\(d_\theta(i,q)=M\) shows that
\[
 \theta_i\le q
 \quad\Longleftrightarrow\quad
 \text{every minimizing cut satisfies }t\ge a,
\]
whereas
\[
 \theta_{i+1}\le q
 \quad\Longleftrightarrow\quad
 \text{some minimizing cut satisfies }t\ge b.
\]

After exchanging \(a,b\), the rank \(d_\rho(i,t)\) increases by one exactly
for \(a\le t<b\); all other rows are unchanged.  The new minimum is therefore
\(M+1\) exactly when every old minimizing cut lies in \([a,b)\).  By the two
preceding equivalences, this is exactly the condition
\(\theta_i\le q<\theta_{i+1}\).  Hence the minimum changes on precisely the
same strip as the rank table obtained by applying the adjacent exchange at
\(i\) to \(\theta\).  Induction over the shortest sequence producing \(\pi\)
proves the formula.
\end{proof}

Formula \eqref{eq:rectangle-rank-product} is also the finite-permutation
case of \cite[Equation~(2)]{Pflueger}; the proof above is self-contained.
In standard terminology, the three adjacent identities above are the
\(0\)-Hecke relations, their induced product on permutations is the Demazure
product, and comparison of the rectangle-rank tables is the rank-matrix form
of strong Bruhat order; see \cite{BjornerBrenti,Pflueger}.  Only the explicit
identities and formulas proved here are used below.

\begin{theorem}
\label{thm:layered-rank-criterion}
For \(1\le r<n\) and \(m\ge1\),
\[
 t_r(Q)\le m
\]
if and only if
\begin{equation}
\label{eq:layered-rank-cut}
 \sum_{\alpha=1}^{m}d_Q(c_{\alpha-1},c_\alpha)\ge r
\end{equation}
for every \(c_1,\ldots,c_{m-1}\in\{0,\ldots,n\}\), where
\[
 c_0=n-r,\qquad c_m=r.
\]
In particular, all one-sided thresholds depend on \(Q\) only through the
ordinary permutation matrix \(P_Q\).
\end{theorem}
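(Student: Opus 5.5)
The plan is to translate the condition \(t_r(Q)\le m\) into a statement about a single southwest rectangle count of \(z_Q^{(m)}\), and then unroll the min-plus recursion of Lemma~\ref{lem:rectangle-rank-product}.

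\emph{Step 1: reduce to one rectangle count.} First I check that the target condition persists. The map \(c_i^+\) fixes \(R_r\), so once \(c_{Q^{m'}}^+(L_r)=R_r\) the same holds for all later \(m\). Hence
\[
t_r(Q)\le m\quad\Longleftrightarrow\quad c_{Q^m}^+(L_r)=R_r .
\]
By \eqref{eq:threshold-label-set}, this holds exactly when the labels \(1,\ldots,r\) occupy positions \(n-r+1,\ldots,n\) of \(z_Q^{(m)}\). That is, \(d_\pi^{(m)}(n-r,r)=r\), where \(\pi=\sigma_Q\). Since only \(r\) positions exceed \(n-r\), we always have \(d_\pi^{(m)}(n-r,r)\le r\). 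So the criterion becomes
\[
d_\pi^{(m)}(n-r,r)\ge r.
\]

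\emph{Step 2: unroll the recursion.} Using \(d_\pi^{(1)}=d_\pi\) and the recursion in Lemma~\ref{lem:rectangle-rank-product}, induction on \(m\) gives
\[
d_\pi^{(m)}(p,q)=\min_{c_1,\ldots,c_{m-1}}\sum_{\alpha=1}^{m}d_\pi(c_{\alpha-1},c_\alpha),
\qquad c_0=p,\ c_m=q.
\]
In the inductive step, the outer minimum over \(t\) becomes the new cut \(c_1\). The inner \(d_\pi^{(m)}(t,q)\) expands into the remaining chain, and minima commute with adding the constant \(d_\pi(p,t)\). Setting \(p=n-r\) and \(q=r\), the inequality \(d_\pi^{(m)}(n-r,r)\ge r\) holds iff every chain sum is at least \(r\). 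This is exactly \eqref{eq:layered-rank-cut}, since \(d_Q=d_{\sigma_Q}\).

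\emph{Step 3: dependence on \(P_Q\) only.} By \eqref{eq:rank-rectangle}, \(d_Q\) is the rank table of \(P_Q\). Since \eqref{eq:layered-rank-cut} involves only \(d_Q\), every threshold \(t_r(Q)\) depends on \(Q\) only through \(P_Q\).

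\emph{Main obstacle.} The only delicate point is the identification \(z_\pi^{(m)}=z_Q^{(m)}\) together with applying Lemma~\ref{lem:rectangle-rank-product} with \(u=z_\pi^{(m)}\). This rests on Lemma~\ref{lem:three-adjacent-identities}: the action of \(Q\), including ineffective repetitions, coincides with \(S_\pi\). I would state that identification explicitly before unrolling the recursion. The rest is bookkeeping, plus the trivial upper bound \(d\le r\) that turns equality into the inequality \(\ge r\).
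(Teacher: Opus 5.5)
Your proposal is correct and follows essentially the paper's own proof: reduce \(t_r(Q)\le m\) via Theorem~\ref{thm:label-sorting} to \(d_\pi^{(m)}(n-r,r)=r\), unroll Lemma~\ref{lem:rectangle-rank-product} into the min-plus chain over cuts \(c_1,\ldots,c_{m-1}\), and use the trivial bound \(d_\pi^{(m)}(n-r,r)\le r\) to convert the equality into the requirement that every chain sum be at least \(r\). Your explicit persistence check, that \(c_i^+\) fixes \(R_r\) so the target condition at some earlier power implies it at power \(m\), supplies a step the paper leaves implicit.
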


\begin{proof}
Put \(\pi=\sigma_Q\).  Theorem~\ref{thm:label-sorting} gives
\[
 t_r(Q)\le m
 \quad\Longleftrightarrow\quad
 d_\pi^{(m)}(n-r,r)=r;
\]
the right side says that all \(r\) labels in \([r]\) occupy the last
\(r\) positions.  Iterating Lemma~\ref{lem:rectangle-rank-product} gives
\[
 d_\pi^{(m)}(c_0,c_m)
 =\min_{c_1,\ldots,c_{m-1}}
 \sum_{\alpha=1}^m d_Q(c_{\alpha-1},c_\alpha).
\]
With \(c_0=n-r\) and \(c_m=r\), the left side is at most \(r\).
It equals \(r\) exactly when every sum under the minimum is at least
\(r\), which is precisely \eqref{eq:layered-rank-cut}.
\end{proof}

The recurrence gives the monotonicity needed below.
If every southwest rectangle of \(\rho\) contains at least as many dots as
the corresponding rectangle of \(\pi\), induction in \(m\) gives the same
comparison after every repetition.  Hence
\begin{equation}
\label{eq:rank-order-monotonicity}
 d_\pi(p,q)\le d_\rho(p,q)\quad(0\le p,q\le n)
 \quad\Longrightarrow\quad
 t_r(\rho)\le t_r(\pi)
 \qquad(1\le r<n).
\end{equation}

For an oscillatory matrix \(A=LDU\), let \(Q_+\) and \(Q_-\) be the
positive upper-factor sequences of \(U\) and \(L^T\).  Lemma
\ref{lem:greedy-reachability} and the corner formula give
\begin{equation}
\label{eq:word-profile-formula}
 e_k(A)=\max\{t_k(Q_+),t_k(Q_-)\},
 \qquad1\le k<n,
\end{equation}
and \(e_n(A)=1\).  Thus the profile depends on the ordered locations of the
positive factors, not on their positive numerical values.

There is also a finite realization statement.  Put
\[
 \Omega_n=(n-1)(n-2,n-1)\cdots(1,2,\ldots,n-1),
\]
and let \(\mathcal W_n\) be the complete subwords of \(\Omega_n\).  This is
the fixed staircase order used in successive elementary bidiagonal
factorization \cite{JohnsonOleskyDriessche,FallatJohnson}.

\begin{theorem}
\label{thm:finite-word-realization}
A positive integer vector \((f_1,\ldots,f_n)\) is the minor-order exponent
profile of an \(n\)-by-\(n\) oscillatory matrix if and only if \(f_n=1\)
and there exist \(Q_+,Q_-\in\mathcal W_n\) such that
\[
 f_k=\max\{t_k(Q_+),t_k(Q_-)\},
 \qquad1\le k<n.
\]
\end{theorem}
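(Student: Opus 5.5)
The proof has two directions.

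\emph{Necessity.} Let \(A\) be oscillatory. Then \(e_n(A)=1\), since \(\det A>0\). Take the Neville (SEB) factorization of \(A\) in the fixed staircase order \cite{JohnsonOleskyDriessche,FallatJohnson}. It writes \(A=LDU\), where the factors of \(U\) occur in the order \(\Omega_n\) and the factors of \(L^T\) also occur in the order \(\Omega_n\), with nonnegative parameters. After deleting the zero-parameter factors, the positive-factor sequences \(Q_+\) and \(Q_-\) are therefore subwords of \(\Omega_n\). By Lemma~\ref{lem:greedy-reachability} both are complete, so \(Q_\pm\in\mathcal W_n\). Formula \eqref{eq:word-profile-formula} then gives \(f_k=e_k(A)=\max\{t_k(Q_+),t_k(Q_-)\}\).

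The one point to verify is that the transposed lower part really is indexed by \(\Omega_n\) in the same reading order. If the convention produces the reversed word instead, I would fix this by running the argument on \(A^T\) or by reading \(L\) through its own staircase. Any product order works with the lemmas, since Lemma~\ref{lem:greedy-reachability} only needs the order in which the factors of \(L^T\) appear.

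\emph{Sufficiency.} Suppose \(f_n=1\) and \(Q_+,Q_-\in\mathcal W_n\) are given. Set
\[
A=U(Q_-)^T\,U(Q_+),
\]
where \(U(w)=\prod_r(I+E_{w_r,w_r+1})\) is as before. Each factor is TN and unimodular, so \(A\) is a nonsingular integer TN matrix with \(\det A=1\).

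Because both words are complete, \(U(Q_+)\) has a positive first superdiagonal and \(U(Q_-)^T\) has a positive first subdiagonal. The product therefore has \(a_{i,i+1}\ge u_{i,i+1}>0\), and symmetrically \(a_{i+1,i}>0\), since \(A\) is a product of a lower and an upper unit triangular TN matrix with nonnegative entries. By Theorem~\ref{thm:oscillatory-criterion}, \(A\) is oscillatory.

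The displayed product is itself an adjacent bidiagonal factorization \(A=LDU\) with \(L=U(Q_-)^T\), \(D=I\) and \(U=U(Q_+)\). Its positive-factor sequences are exactly \(Q_+\) and \(Q_-\), because the factor order of \(L^T=U(Q_-)\) is \(Q_-\). Lemma~\ref{lem:greedy-reachability} holds for any such factorization, so it gives \(e_k(A)=\max\{t_k(Q_+),t_k(Q_-)\}=f_k\) for \(k<n\), and \(e_n(A)=1=f_n\). This also yields the integer unimodular realization.

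\emph{Main obstacle.} I expect the only delicate step to be the necessity direction: confirming that the SEB factorization of every nonsingular TN matrix fits the staircase \(\Omega_n\) on both sides. I would cite the Neville-elimination result \cite{GascaPena,FallatJohnson} for this. If a mismatch in word order arises there, the fallback is Lemma~\ref{lem:three-adjacent-identities}. The thresholds depend only on \(\sigma_Q\), and every permutation has a reduced word that is a subword of the reduced word \(\Omega_n\) of the longest permutation. One would also have to preserve completeness, for instance by appending ineffective occurrences. Since such occurrences need not be subwords, I would rely primarily on the direct SEB citation.
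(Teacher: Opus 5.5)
Your proposal is correct and matches the paper's proof: necessity comes from Neville elimination in the staircase order \(\Omega_n\), completeness from the oscillatory criterion, and formula \eqref{eq:word-profile-formula}, while sufficiency uses the explicit unimodular product \(U(Q_-)^TU(Q_+)\). The ordering worry you raise does not arise, because in the standard SEB form (parameters suppressed) \(L=(y_{n-1}\cdots y_1)(y_{n-1}\cdots y_2)\cdots(y_{n-1})\), so \(L^T=(x_{n-1})(x_{n-2}x_{n-1})\cdots(x_1\cdots x_{n-1})\) is read in exactly the order \(\Omega_n\) and no fallback is needed.
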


\begin{proof}
Ordinary Neville elimination in the fixed staircase order \(\Omega_n\)
makes the positive upper factors a subword \(Q_+\), and similarly gives
\(Q_-\) from \(L^T\).  The oscillatory criterion says that both are
complete.  Formula~\eqref{eq:word-profile-formula} proves necessity.

Conversely, for \(Q_+,Q_-\in\mathcal W_n\), put
\[
 U(Q)=\prod_{q\text{ in }Q}(I+E_{q,q+1}),
 \qquad A=U(Q_-)^TU(Q_+).
\]
This is nonsingular TN, and completeness makes both first off-diagonals
positive.  Hence \(A\) is oscillatory, and
\eqref{eq:word-profile-formula} gives the prescribed vector.  All factors
have integer entries and determinant one, so \(A\) is an integer
unimodular realization.  Thus every realizable profile has such a
realization.
\end{proof}

\section{Compatibility beyond pairwise bounds}
\label{sec:compatibility}

The preceding inequalities compare two coordinates at a time.  Positivity
after two passes at one order in fact forces positivity after three
passes two orders higher.  Only the rectangle counts defined in the
preceding section are needed.

\subsection{The first two-pass implication}

Fix \(k\), put \(a=n-k\), write \(\sigma=\sigma_Q\), and abbreviate
\(d_Q\) in \eqref{eq:rank-rectangle} to \(d\).  Theorem
\ref{thm:layered-rank-criterion}, with two and three factors, gives
\begin{align}
 t_k(Q)\le2
 &\quad\Longleftrightarrow\quad
 P_t:=d(a,t)+d(t,k)\ge k
 \quad(0\le t\le n),
 \label{eq:two-link-rank}\\
 t_{k+2}(Q)\le3
 &\quad\Longleftrightarrow\quad
 C(r,s):=d(a-2,r)+d(r,s)+d(s,k+2)\ge k+2
 \quad(0\le r,s\le n).
 \label{eq:three-link-rank}
\end{align}
Thus only the ranks of southwest rectangles of \(P_Q\) remain.

We shall use the following elementary triangle inequality for these ranks:
\begin{equation}
\label{eq:rectangle-rank-triangle}
 d(p,q)+d(q,r)\ge d(p,r)
 \qquad(0\le p,q,r\le n).
\end{equation}
If \(q<p\), the second term already contains the first rectangle together
with some additional rows.  If \(q\ge p\), deleting rows
\(p+1,\ldots,q\) removes at most \(q-p\) entries from \(d(p,r)\), while
among the \(q\) values at most \(q\), at most \(p\) can occur in the first
\(p\) rows; hence \(d(p,q)\ge q-p\).  This proves
\eqref{eq:rectangle-rank-triangle} in both cases.

\begin{lemma}
\label{lem:two-row-augmentation}
Assume \(k\ge3\), \(a\ge4\), and \(P_t\ge k\) for every \(t\).  Then
\[
 C(r,s)\ge\min\{P_r,P_s\}+2
 \qquad(0\le r,s\le n).
\]
\end{lemma}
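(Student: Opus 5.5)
The plan is to compare \(C(r,s)\) with \(P_r\) and \(P_s\) by splitting off the two extra rows and the two extra columns in which \(C\) differs from the two-link sums. The rectangle counts satisfy
\[
 d(a-2,r)=d(a,r)+\delta_1(r),\qquad
 \delta_1(r)=\#\{i\in\{a-1,a\}:\sigma_i\le r\},
\]
\[
 d(s,k+2)=d(s,k)+\delta_2(s),\qquad
 \delta_2(s)=\#\{i>s:\sigma_i\in\{k+1,k+2\}\},
\]
with \(\delta_1,\delta_2\in\{0,1,2\}\). Hence
\[
 C(r,s)=d(a,r)+d(r,s)+d(s,k)+\delta_1(r)+\delta_2(s).
\]
The triangle inequality \eqref{eq:rectangle-rank-triangle} can be applied to either the first two or the last two terms. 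This gives the two basic bounds
\[
 C(r,s)\ge P_s+\delta_1(r)+\delta_2(s),\qquad C(r,s)\ge P_r+\delta_1(r)+\delta_2(s),
\]
with exact defects \(E_1=d(a,r)+d(r,s)-d(a,s)\) and \(E_2=d(r,s)+d(s,k)-d(r,k)\). The lemma therefore holds outright whenever \(\delta_1(r)+\delta_2(s)\ge2\). This covers, for instance, the case \(r\ge\max\{\sigma_{a-1},\sigma_a\}\) and the case where \(s\) lies to the left of both positions \(\sigma^{-1}(k+1)\) and \(\sigma^{-1}(k+2)\).

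The remaining cases have \(\delta_1(r)+\delta_2(s)\le1\), and there the missing units must come from the defects \(E_1,E_2\). First I would write the defects explicitly, following the case split in the proof of \eqref{eq:rectangle-rank-triangle}. If \(r<a\), then \(E_1\) counts the dots in rows \(r+1,\ldots,a\) with value at most \(s\). If \(r\ge a\), then \(E_1\) is the defect \(d(a,r)-(r-a)\) plus the number of dots in rows \(a+1,\ldots,r\) with values in \((r,s]\) when \(s>r\), and the analogous expressions hold in the other subcases. The same description, transposed, applies to \(E_2\) along the value coordinate. The smallness of \(\delta_1(r)\) says that at least one of \(\sigma_{a-1},\sigma_a\) exceeds \(r\). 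The smallness of \(\delta_2(s)\) says that at least one of the values \(k+1,k+2\) sits at a position \(\le s\). Each such dot is a candidate contributor to \(E_1\) or to \(E_2\).

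For the key step I would use the hypothesis \(P_t\ge k\) at well-chosen cuts \(t\), for example \(t=a-2\), \(t=a\), \(t=k\), \(t=k+2\), and \(t=r,s\) themselves. The aim is to show that these offending dots cannot sit in places where they contribute nothing. For instance, \(P_a=d(a,a)+d(a,k)\ge k\) and \(P_k\ge k\) force many small values into the bottom rows. Combined with \(a\ge4\) and \(k\ge3\), this should show that a dot in row \(a-1\) or \(a\) with value larger than \(r\), or a value \(k+1\) or \(k+2\) placed at or before \(s\), either produces a unit of \(E_1\) or \(E_2\) or else makes the minimizing \(P_t\) strictly larger than \(k\) at the relevant cut. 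Either outcome supplies the missing unit. The constraints \(k\ge3\) and \(a\ge4\) are presumably exactly what keeps rows \(a-1,a\) and values \(k+1,k+2\) away from the boundary cells, where the argument would break down.

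I expect the main obstacle to be the mixed case \(\delta_1(r)+\delta_2(s)=1\) with \(r\) and \(s\) on opposite sides of \(a\) and of \(k\). There, the single missing unit has to be located in \(E_1\) when it is compared with \(P_s\), and in \(E_2\) when it is compared with \(P_r\). I would organize this by the relative order of \(r,s,a-2,a,k,k+2\). In each configuration I would use the hypothesis \(P_t\ge k\) at the nearest relevant cut, checking whether the offending dot counts toward \(E_1\) or \(E_2\); if it counts toward neither, I would show that it forces a strict inequality in the corresponding \(P\).
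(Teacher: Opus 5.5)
\textbf{There is a genuine gap: you set up the right bookkeeping but never prove the lemma.} Your setup coincides with the paper's. Your \(\delta_1(r)\), \(\delta_2(s)\) and \(E_2\) are its \(x\), \(y\), \(z\), and \(E_1+\delta_1(r)\) is its \(u\), so \(C-P_r=x+y+z\) and \(C-P_s=u+y\).

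Beyond this you only settle the trivial case \(\delta_1+\delta_2\ge2\). The residual case, which is the whole content of the lemma, is left as a statement of intent (``this should show \dots''). In particular, the crossing region \(r<\min\{a,k\}\), \(s>\max\{a,k\}\) lies inside your residual case; at \((r,s)=(0,n)\) both \(\delta\)'s vanish. There both units must come entirely from \(z\) or from \(u+y\).

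What is missing is a case split on the position of \(r,s\) relative to \(a\) and \(k\), with concrete estimates:
\begin{itemize}
\item For \(s\le k\): \(y+z\ge2\), since \(d(s,k+2)\ge k+2-s\) while \(d(r,k)-d(r,s)\le k-s\).
\item For \(r\ge a\): \(u\ge2\), since \(d(a-2,r)\ge r-a+2\) while \(d(a,s)-d(r,s)\le r-a\).
\item For \(r<a\), \(k<s\le a\): both \(d(a,s)\) and \(d(s,k)\) are at most \(z\), so \(3\le P_s\le 2z\) and hence \(z\ge2\).
\item For \(k\le r<a\), \(s>k\): symmetrically \(3\le P_r\le 2u\).
\end{itemize}
The crossing region needs a finer argument. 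You must split according to whether \(P_r\le P_s\) or \(P_s\le P_r\), and then prove \(z\ge2\), respectively \(u+y\ge2\), by a detailed count of dots (Lemma~\ref{lem:crossing-rectangle}). For instance, \(z=1\) is excluded only after it forces \(a=4\), \(r=2\), \(q=1\), \(m+t=3\). That is exactly where \(a\ge4\) is used. The comparison between \(P_r\) and \(P_s\) is essential there. By contrast, the paper never needs \(P_t\) at the cuts \(t=a-2,a,k,k+2\) that you propose, and your sketch gives no indication of how the crossing count would go.

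\textbf{Your fallback argument proves a weaker statement.} You propose that otherwise ``the offending dot makes \(P_t\) strictly larger than \(k\), which supplies the missing unit.'' That does not prove the lemma as stated. From \(C\ge P_r+1\) and \(P_r\ge k+1\) you get \(C\ge k+2\), not \(C\ge\min\{P_r,P_s\}+2\). The weaker bound is all that Theorem~\ref{thm:two-sweep-compatibility} actually uses, so you could aim for it. But then you are proving a different statement, and you should say so.

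\textbf{Two of your explicit formulas for \(E_1\) are incorrect.}
\begin{itemize}
\item For \(r<a\) you dropped a term: \(E_1=d(a,r)+\#\{r<i\le a:\sigma_i\le s\}\).
\item For \(r\ge a\) one has \(E_1=d(a,r)-(r-a)+\#\{a<i\le r:\sigma_i>s\}\). The extra count is of values above \(s\), not values in \((r,s]\).
\end{itemize}
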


Put
\begin{align}
 x&=d(a-2,r)-d(a,r),\nonumber\\
 y&=d(s,k+2)-d(s,k),\nonumber\\
 z&=d(r,s)+d(s,k)-d(r,k),\label{eq:rank-gaps}\\
 u&=d(a-2,r)+d(r,s)-d(a,s).\nonumber
\end{align}
All four quantities are nonnegative.  The first two count entries in the
two newly exposed rows and columns; the last two are instances of
\eqref{eq:rectangle-rank-triangle}.  Direct expansion gives
\begin{equation}
\label{eq:rank-gap-identities}
 C(r,s)-P_r=x+y+z,
 \qquad C(r,s)-P_s=u+y.
\end{equation}

These identities give the required estimate when \(s\le k\), when
\(r\ge a\), when \(r<a\) and \(k<s\le a\), and when
\(k\le r<a\) and \(s>k\).  The only remaining range is
\[
 r<\min\{a,k\},\qquad s>\max\{a,k\}.
\]

\begin{lemma}
\label{lem:crossing-rectangle}
Assume \(k\ge3\) and \(a\ge4\).  Suppose
\[
 r<\min\{a,k\},\qquad s>\max\{a,k\},\qquad P_s\ge k.
\]
Then
\[
 P_r\le P_s\Longrightarrow z\ge2,
 \qquad
 P_s\le P_r\Longrightarrow u+y\ge2.
\]
\end{lemma}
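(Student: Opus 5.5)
The plan is to turn every quantity in the lemma into a count of dots of the permutation matrix \(P_Q\) in explicit blocks. The statement then becomes a finite family of linear inequalities, subject to the conservation law that every row and every column carries exactly one dot. Write \(\sigma=\sigma_Q\). The hypotheses \(r<\min\{a,k\}\) and \(s>\max\{a,k\}\) fix the relative order of all the cuts involved. On the row side we have \(r<a-2<a<s\) when \(r\le a-3\); the boundary cases \(r\in\{a-2,a-1\}\) will be handled separately, using \(a\ge4\) to keep \(a-2\ge2\). On the column side we have \(r<k<k+2\) and \(k<s\). First I would expand, for \(r<k<s\),
\[
 z=\#\{r<i\le s:\ k<\sigma_i\le s\}+\#\{i>s:\ \sigma_i\le s\}.
\]
This holds because \(d(r,k)-d(s,k)=\#\{r<i\le s:\sigma_i\le k\}\). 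Similarly, for \(r<a<s\),
\[
 u+y=\#\{i>a-2:\ \sigma_i\le r\}+\#\{r<i\le a:\ \sigma_i\le s\}+\#\{i>s:\ \sigma_i\in\{k+1,k+2\}\},
\]
together with
\[
 P_s-P_r=\#\{i>a:\ r<\sigma_i\le s\}-\#\{r<i\le s:\ \sigma_i\le k\}.
\]

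For the first implication I would argue by contradiction, assuming \(P_r\le P_s\) and \(z\le1\). The condition \(z\le1\) says two things. First, at most one value in \((k,s]\) occupies a row in \((r,s]\). Second, at most one value \(\le s\) sits below row \(s\). Since rows \(r+1,\dots,s\) number \(s-r\), and at most \(n-s\) of them can carry values \(>s\), this forces \(\#\{r<i\le s:\sigma_i\le k\}\ge s-r-(n-s)-1\). Dually, \(P_r\le P_s\) forces many values in \((r,s]\) to lie below row \(a\). Rows below \(s\) hold at most one such value, so these values must sit in rows \((a,s]\), which number \(s-a\). I would then combine these counts with \(P_s\ge k\), written as \(d(a,s)+d(s,k)\ge k\). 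Since \(d(s,k)\le z\le1\), this gives \(d(a,s)\ge k-1\). So at least \(k-1\) rows below \(a\) carry values \(\le s\), and with \(z\le1\) almost all of them lie in \((a,s]\). The aim is to show that the rows \((r,s]\) are then overfilled: they would have to hold more than \(s-r\) dots, or the column count of values \(\le k\) would exceed \(k\). The slack in this count is supplied by \(k\ge3\).

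For the second implication I would assume \(P_s\le P_r\) and \(u+y\le1\), and run the same kind of count on the transposed picture. Here \(u+y\le1\) forces three things: rows in \((r,a]\) almost all carry values \(>s\); rows below \(a-2\) almost never carry values \(\le r\); and the columns \(k+1,k+2\) have their dots in rows \(\le s\) with at most one exception. The condition \(P_s\ge k\) again gives \(d(a,s)+d(s,k)\ge k\). Meanwhile \(P_s\le P_r\) says that the values \(\le k\) in rows \((r,s]\) outnumber the values in \((r,s]\) below row \(a\). The rows \((r,a]\) number \(a-r\), which is at least \(a-k+1\) when \(k\le a\), and at least \(1\) in any case. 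Most of these rows must take values \(>s\), and there are only \(n-s<n-a=k\) such values. Playing this against the columns \(k+1,k+2\), which are forced into rows \(\le s\) and also outside \((r,a]\), should produce the contradiction. This is where \(a\ge4\) enters: it guarantees that the exposed rows \(a-1,a\) lie strictly above \(s\) and are distinct from row \(r\).

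I expect the main obstacle to be exactly this bookkeeping in the second implication. The relative order of \(k\) and \(a\) is not fixed, so the blocks must be treated in two subcases, \(k\le a\) and \(k>a\), and in each the extremal configurations come within one dot of the bound. I would first try to handle both subcases uniformly, by summing the row-conservation identity over \((r,s]\) and the column-conservation identity over \([1,k+2]\). The contradiction should then appear as a single integer inequality of the form \(k+2\le k+1\), or \(a\le a-1\) in the other subcase. If a uniform treatment fails, I would split on \(k\le a\) versus \(k>a\) and check the tight configurations by hand.
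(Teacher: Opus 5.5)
Your identities for \(z\), for \(u+y\), and for \(P_s-P_r\) are correct. The gap is that the proposal stops exactly where the content of the lemma begins: neither contradiction is derived, and the mechanism you sketch for them points the wrong way.

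Both implications are false when \(a=3\), even with \(k\ge3\). Take the permutation \(\sigma=(2,5,6,4,1,7,3)\) from the paper's sharpness table, with \(n=7\), \(k=4\), \(a=3\), and \((r,s)=(2,5)\). Then \(r<\min\{a,k\}\), \(s>\max\{a,k\}\) and \(P_r=P_s=4=k\), yet \(z=1\) and \(u+y=1\). So \(a\ge4\) has to be used as a numerical bound in each half.

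This corrects several points in your plan:
\begin{itemize}
\item In the first half the slack is not supplied by \(k\ge3\); in this example \(k=4\).
\item In the second half, \(a\ge4\) is not what puts rows \(a-1,a\) above \(s\); that is just \(s>a\).
\item Nor does \(a\ge4\) make those rows distinct from row \(r\); \(r=a-1\) is allowed whenever \(k\ge a\).
\item No count that ignores \(a\ge4\) can end in a contradiction such as \(a\le a-1\).
\item The split \(k\le a\) versus \(k>a\) is not needed.
\item Your estimate ``at most \(n-s\) of the rows \((r,s]\) carry values \(>s\)'' discards the key fact: the number of rows \(\le s\) carrying values \(>s\) equals \(\beta:=\#\{i>s:\sigma_i\le s\}\), and \(\beta\le z\) by your own identity for \(z\).
\end{itemize}

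With that fact the first half closes quickly. Put \(q=\#\{i\le a:\sigma_i\le k\}\).
\begin{itemize}
\item One has \(P_s\le k-q+z\), so \(P_s\ge k\) gives \(q\le z\).
\item Your \(P_s-P_r\) identity, with \(P_r\le P_s\), gives \(d(a,r)+a-r\le z+\beta\).
\item Since \(r<k\), \(q\ge r-d(a,r)\).
\end{itemize}
Hence \(a\le2z+\beta\le3z\), so \(a\ge4\) forces \(z\ge2\). This is somewhat shorter than the paper's route. The paper proves \(r\le q+z\le2z\) and \(d(s,k)\ge a-(s-k)-q\), then rules out \(z=0\) and the configuration \(a=4\), \(r=2\), \(q=1\) forced by \(z=1\).

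The second half genuinely needs the paper's case analysis, none of which appears in your plan:
\begin{itemize}
\item From \(u\le1\), \(d_0=d(a-2,r)\in\{0,1\}\).
\item From \(P_r\ge P_s\ge k\), \(c=\#\{i\le r:\sigma_i\le k\}\le d_0\).
\item Every value in \([r]\) is counted by \(c\), by \(d_0\), or by the second term of your \(u+y\) identity, so \(r\le2\).
\item If \(d_0=0\) and \(r=0\), then \(s=k+1\), and all of \([k+2]\) would have to sit in the first \(k+1\) rows.
\item If \(d_0=0\) and \(r=1\), one gets \(1\ge a-2\).
\item If \(d_0=1\), one gets \(d(s,k)+(s-k)\le2\) and \(c\le1\). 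Then \(d(s,k)\ge a-(s-k)-c\) yields \(a\le3\).
\end{itemize}
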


\begin{proof}
Put \(m=s-k\) and \(t=d(s,k)\).  Thus \(m\ge1\), and \(t\) is the
number of permutation-matrix entries in columns at most \(k\) below row
\(s\).

First suppose \(P_r\le P_s\).  Let
\begin{align*}
 b&=\#\{i\le r:k<\sigma_i\le s\},&
 q&=\#\{i\le a:\sigma_i\le k\},\\
 M&=\#\{i>a:k<\sigma_i\le s\},&
 c&=\#\{i\le r:\sigma_i\le k\},
\end{align*}
and let \(v\) be the number of values in \([r]\) occurring in the first
\(a\) positions.  Counting the indicated rectangles gives
\[
 z=m-b+t,\qquad P_s-k=M+t-q,\qquad P_r=k+r-v-c.
\]
Since \(M\le m-b\), the inequality \(P_s\ge k\) gives \(q\le z\).
Also \(v,c\le q\).  The inequality \(P_r\le P_s\) therefore implies
\begin{equation}
\label{eq:crossing-r-bound}
 r\le v+c-q+M+t\le q+z\le2z.
\end{equation}
Moreover,
\begin{equation}
\label{eq:crossing-z-and-t}
 z\ge m-r+t,
 \qquad t\ge a-m-q.
\end{equation}
The second inequality follows because the \(k-q\) small values below row
\(a\) have only \(s-a\) available positions before row \(s\).

If \(z=0\), then \eqref{eq:crossing-r-bound} gives \(r=0\), whereas
\(z=m-b+t\ge m\), a contradiction.  If \(z=1\),
\eqref{eq:crossing-r-bound}--\eqref{eq:crossing-z-and-t} force
\[
 a=4,\qquad r=2,\qquad q=1,\qquad m+t=3.
\]
Then \(z=m-b+t=1\) gives \(b=2\), hence \(c=0\).  But the first
inequality in \eqref{eq:crossing-r-bound}, with \(v\le1\) and
\(M+t\le z=1\), requires \(c=1\).  Thus \(z\ge2\).

Now suppose \(P_s\le P_r\), and assume \(u+y\le1\).  Write
\[
 d_0=d(a-2,r),\qquad
 e=d(r,s)-d(a,s)=\#\{r<i\le a:\sigma_i\le s\},
\]
so \(u=d_0+e\).  Put
\[
 \xi=d(a-2,r)-d(a,r),\qquad
 c=\#\{i\le r:\sigma_i\le k\}.
\]
Since \(P_s\ge k\) and \(P_s\le P_r\),
\begin{equation}
\label{eq:crossing-pr-defect}
 0\le P_r-k=d_0-\xi-c.
\end{equation}

If \(d_0=0\), then \(c=0\) and \(P_r=P_s=k\).  All values in \([r]\)
occur in the first \(a-2\) positions but none in the first \(r\), so
\(e\ge r\) and hence \(r\le1\).  If \(r=0\), then
\(e=s-d(a,s)=m+t\); the inequality \(e+y\le1\) would put the \(k+2\)
values in \([k+2]\) into only \(k+1\) positions.  If \(r=1\), then
\(e=1\), \(y=0\), and only one small value occurs in the first \(a\)
positions.  At least \(m-1\) of the values in \((k,s]\) then occur below
row \(a\), while \(t\ge a-m-1\).  The equality \(P_s=k\) would give
\[
 1\ge(m-1)+(a-m-1)=a-2\ge2,
\]
a contradiction.

It remains that \(d_0=1\), so \(e=y=0\).  All small values in the first
\(a\) positions already occur in the first \(r\), and hence \(q=c\).
If \(b\) again counts the values in \((k,s]\) in the first \(r\)
positions, then comparison of \(P_s-k\) and \(P_r-k\) gives
\[
 m-b+t+\xi\le1.
\]
Among the values in \([r]\), exactly \(r-1+\xi\) occur in the first
\(a\) positions.  Hence \(b\le r-c\le1-\xi\), and therefore
\(m+t\le2\).  Finally \(t\ge a-m-c\) and
\eqref{eq:crossing-pr-defect} gives \(c\le1\), so
\[
 a\le m+t+c\le3,
\]
contrary to \(a\ge4\).  Thus \(u+y\ge2\).
\end{proof}

\begin{proof}[Proof of Lemma~\ref{lem:two-row-augmentation}]
If \(s\le k\), then
\[
 y+z=d(r,s)+d(s,k+2)-d(r,k)\ge2,
\]
because \(d(s,k+2)\ge k+2-s\) while
\(d(r,k)-d(r,s)\le k-s\).  Hence \(C\ge P_r+2\).
If \(r\ge a\), then
\[
 u=d(a-2,r)-\bigl(d(a,s)-d(r,s)\bigr)\ge2,
\]
because the two terms are bounded below and above by \(r-a+2\) and
\(r-a\), respectively.  Hence \(C\ge P_s+2\).

Assume now \(r<a\) and \(s>k\).  If \(s\le a\), splitting the entries
below row \(a\) according as their columns are at most \(k\) or in
\((k,s]\) gives
\[
 d(a,s)\le \bigl(d(r,s)-d(r,k)\bigr)+d(s,k)=z,
 \qquad d(s,k)\le z.
\]
Thus \(P_s=d(a,s)+d(s,k)\le2z\).  Since \(P_s\ge k\ge3\), integrality gives
\(z\ge2\), so \(C\ge P_r+2\).  If \(r\ge k\), the analogous row split
gives
\[
 d(a,r)\le d(a-2,r)\le u,
 \qquad
 d(r,k)\le d(r,s)-d(a,s)+d(a,r)\le u.
\]
Hence \(P_r\le2u\), so \(u\ge2\) and \(C\ge P_s+2\).

The remaining region is exactly that of
Lemma~\ref{lem:crossing-rectangle}.  According as \(P_r\le P_s\) or
\(P_s\le P_r\), that lemma and \eqref{eq:rank-gap-identities} give
\(C\ge P_r+2\) or \(C\ge P_s+2\).  This proves the result.
\end{proof}

\begin{theorem}
\label{thm:two-sweep-compatibility}
For every adjacent factor sequence \(Q\),
\[
 t_k(Q)\le2\quad\Longrightarrow\quad t_{k+2}(Q)\le3,
 \qquad3\le k\le n-4.
\]
Consequently, for every nonsingular TN matrix \(B\),
\begin{equation}
\label{eq:one-sided-matrix-two-sweep}
 \Delta_{L_k,R_k}(B^2)>0
 \quad\Longrightarrow\quad
 \Delta_{L_{k+2},R_{k+2}}(B^3)>0,
 \qquad3\le k\le n-4.
\end{equation}
For every oscillatory matrix \(A\),
\[
 e_k(A)\le2\quad\Longrightarrow\quad e_{k+2}(A)\le3,
 \qquad3\le k\le n-4.
\]
By checkerboard inversion, also
\[
 e_{k+2}(A)\le2\quad\Longrightarrow\quad e_k(A)\le3,
 \qquad2\le k\le n-5.
\]
\end{theorem}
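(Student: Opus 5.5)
The plan is to derive the combinatorial statement from the rank criteria and Lemma~\ref{lem:two-row-augmentation}, and then pass to matrices through Lemma~\ref{lem:greedy-reachability}, the corner formula, and checkerboard inversion.

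\emph{Combinatorial statement.} Fix \(Q\) and put \(a=n-k\). The range \(3\le k\le n-4\) gives \(k\ge3\) and \(a\ge4\), which are exactly the hypotheses of Lemma~\ref{lem:two-row-augmentation}. Suppose \(t_k(Q)\le2\). By \eqref{eq:two-link-rank}, \(P_t\ge k\) for every \(t\). Lemma~\ref{lem:two-row-augmentation} then gives, for all \(r,s\),
\[
 C(r,s)\ge\min\{P_r,P_s\}+2\ge k+2.
\]
By \eqref{eq:three-link-rank}, with \(c_0=a-2=n-(k+2)\) and \(c_3=k+2\), this is precisely \(t_{k+2}(Q)\le3\). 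The sequence \(Q\) need not be complete here. Theorem~\ref{thm:layered-rank-criterion} and Theorem~\ref{thm:label-sorting} only use the greedy passes, with \(t_r=\infty\) allowed, and the rank identities hold for any sequence.

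\emph{One-sided matrix statement.} Take a bidiagonal factorization \(B=LDU\) and let \(Q_+\) be its upper sequence. By \eqref{eq:one-sided-upper-sequence},
\[
 \Delta_{L_k,R_k}(B^2)>0\quad\Longleftrightarrow\quad c^+_{Q_+^2}(L_k)=R_k .
\]
Since \(L_k\) can only move toward \(R_k\), the right side says \(t_k(Q_+)\le2\). The combinatorial statement gives \(t_{k+2}(Q_+)\le3\), and the same equivalence applied at \(m=3\) and order \(k+2\) yields \eqref{eq:one-sided-matrix-two-sweep}. One caution: \(t_k\) is defined as the least number of passes, so \(t\le m\) is equivalent to reaching \(R_k\) within \(m\) passes. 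This holds because \(R_k\) is fixed by every \(c^+\), so once reached it stays.

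\emph{Oscillatory case.} If \(e_k(A)\le2\), then by \eqref{eq:word-profile} both \(t_k(Q_+)\le2\) and \(t_k(Q_-)\le2\). Apply the combinatorial statement to each sequence and take the maximum, again by \eqref{eq:word-profile}, to get \(e_{k+2}(A)\le3\).

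\emph{Reversed implication.} Suppose \(e_{k+2}(A)\le2\) with \(2\le k\le n-5\). Set \(k'=n-k-2\); then \(3\le k'\le n-5\le n-4\). Theorem~\ref{thm:checkerboard-inverse-duality} gives \(e_{k'}(A^\vee)=e_{k+2}(A)\le2\). The forward implication, applied to the oscillatory matrix \(A^\vee\), gives \(e_{k'+2}(A^\vee)\le3\). Since \(k'+2=n-k\), this is \(e_k(A)\le3\).

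The main obstacle is already handled by Lemma~\ref{lem:two-row-augmentation}. What remains is bookkeeping: checking that \(k\ge3\) and \(a\ge4\) match the stated range, and that the index translation under checkerboard inversion produces exactly \(2\le k\le n-5\).
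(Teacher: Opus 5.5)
Your proposal is correct and follows the paper's proof step for step. You use the rank criteria together with Lemma~\ref{lem:two-row-augmentation} for the word statement, Lemma~\ref{lem:greedy-reachability} for the one-sided matrix version, both one-sided words for the oscillatory case, and checkerboard inversion for the reverse implication. One harmless slip: \(k\ge2\) gives \(k'=n-k-2\le n-4\), with equality at \(k=2\), not \(k'\le n-5\); but \(k'\le n-4\) is all you need.
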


\begin{proof}
The hypothesis and \eqref{eq:two-link-rank} give \(P_t\ge k\) for all
\(t\).  Lemma~\ref{lem:two-row-augmentation} gives
\[
 C(r,s)\ge\min\{P_r,P_s\}+2\ge k+2,
\]
so \eqref{eq:three-link-rank} yields \(t_{k+2}(Q)\le3\).

For \eqref{eq:one-sided-matrix-two-sweep}, apply the one-sided equality in
Lemma~\ref{lem:greedy-reachability} to the upper sequence in an adjacent
factorization of \(B\).  For an oscillatory matrix, apply the sequence
result separately to \(Q_+\) and \(Q_-\), then use
\eqref{eq:word-profile-formula}.  The reverse-direction implication follows
from Theorem~\ref{thm:checkerboard-inverse-duality}.
\end{proof}

\begin{corollary}
\label{cor:pairwise-bounds-not-sufficient}
The universal, neighboring-order, and all-order inequalities do not
characterize exponent profiles.
\end{corollary}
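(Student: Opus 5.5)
The plan is to exhibit a single vector that satisfies every pairwise inequality proved so far and yet violates Theorem~\ref{thm:two-sweep-compatibility}. The corollary then follows at once: by Theorem~\ref{thm:two-sweep-compatibility}, such a vector cannot be the profile of any oscillatory matrix.

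The pairwise inequalities to be checked are these:
\begin{itemize}
\item Theorem~\ref{thm:universal-fixed-order-bound}: \(f_k\le\max\{k,n-k\}\).
\item Theorem~\ref{thm:neighboring-orders}: \(|f_{k+1}-f_k|\le1\).
\item Theorem~\ref{thm:all-order-comparison}: \(f_j\le Q_{j,k}f_k\) for all \(j,k\).
\end{itemize}
I would also impose the trivial conditions \(f_n=1\) and \(f_k\ge1\).

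I would take the smallest admissible size, \(n=7\) with \(k=3\), so that the implication \(e_3\le2\Rightarrow e_5\le3\) applies (indeed \(3\le k\le n-4\)). The target is a vector with \(f_3=2\) and \(f_5=4\). Neighboring-order compatibility then forces \(f_4=3\).

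Next, choose the remaining coordinates so that all ratio bounds hold. For \(j\le k\) the bound is \(\lceil (n-j)/(n-k)\rceil\), and for \(j\ge k\) it is \(\lceil j/k\rceil\). For instance, \(Q_{5,3}=\lceil5/3\rceil=2\), so \(f_5\le 2f_3=4\) holds, and the constraint from \(k=4\) gives \(f_5\le\lceil5/4\rceil\cdot3=6\). The universal bound gives \(f_5\le5\). I would try
\[
(f_1,\ldots,f_7)=(3,3,2,3,4,4,1).
\]
The universal bounds are \(6,5,4,4,5,6\) for \(k=1,\ldots,6\), and each entry lies below its bound. Consecutive differences among \(f_1,\ldots,f_6\) are at most one.

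The main labor is the finite check of \(f_j\le Q_{j,k}f_k\) over all \(36\) ordered pairs with \(1\le j,k\le6\). The pairs most at risk are those with small \(f_k\), namely \(k=3\), where one needs \(f_j\le 2Q_{j,3}\). Here \(Q_{j,3}=\lceil(7-j)/4\rceil=2\) for \(j=1,2\), and \(Q_{j,3}=\lceil j/3\rceil=2\) for \(j=4,5,6\), so the requirement is \(f_j\le4\). This holds for every \(j\).

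For the other \(k\), the value \(f_k\ge3\) and \(Q_{j,k}\ge1\) give \(Q_{j,k}f_k\ge3\), which covers every \(f_j\le3\). The only pairs needing care are those with \(j\in\{5,6\}\), where \(f_j=4\), and these require \(Q_{j,k}\ge2\) or \(f_k\ge4\):
\begin{itemize}
\item For \(k=1,2\), \(Q_{j,k}=\lceil j/k\rceil\ge2\).
\item For \(k=4\), \(Q_{5,4}=\lceil5/4\rceil=2\) and \(Q_{6,4}=2\).
\item For \(k=5,6\), \(f_k=4\), so the bound is immediate.
\end{itemize}
If some pair unexpectedly fails, I would adjust \(f_1,f_2,f_6\) within the neighboring constraints, or move to \(n=8\).

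Finally, the vector has \(f_3=2\) but \(f_5=4>3\). This contradicts Theorem~\ref{thm:two-sweep-compatibility}, so the vector is not a profile, although it satisfies all the listed pairwise inequalities.
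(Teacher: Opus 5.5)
Your proof is correct and follows the paper's argument: you take \(n=7\), exhibit a vector with \(f_3=2\) and \(f_5=4\) that satisfies the universal, neighboring-order and all-order inequalities, and invoke Theorem~\ref{thm:two-sweep-compatibility} with \(k=3\). The only difference is the example: the paper uses \((1,2,2,3,4,3,1)\) where you use \((3,3,2,3,4,4,1)\). Your case check covers all \(36\) pairs and each inequality holds, so the fallback adjustments you mention are not needed.
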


\begin{proof}
For \(n=7\), the vector
\[
 (1,2,2,3,4,3,1)
\]
satisfies all three families of pairwise inequalities.  It cannot be an
exponent profile, because \(e_3=2\) would imply \(e_5\le3\), contrary to
its fifth coordinate.
\end{proof}

The constant and the endpoint range are sharp.  Direct application of the
adjacent index maps to the following one-period permutations gives
\[
\begin{array}{c|c}
\sigma&(t_1,\ldots,t_6)\\ \hline
(2,4,5,6,3,7,1)&(1,2,2,3,3,4)\\
(2,3,4,5,6,7,1)&(1,2,3,4,5,6)\\
(2,5,6,4,1,7,3)&(2,3,2,2,3,4).
\end{array}
\]
The first has \(t_3=2,t_5=3\); the second rules out the lower endpoint
\(k=2\); the third has \(t_4=2,t_6=4\), showing that \(n-k\ge4\) is
necessary.

\subsection{Subadditivity and the order-three boundary}

\begin{lemma}
\label{lem:profile-subadditivity}
If \(a,b\ge1\) and \(a+b<n\), then
\[
 t_{a+b}(Q)\le t_a(Q)+t_b(Q).
\]
\end{lemma}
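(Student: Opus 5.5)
Set \(\alpha=t_a(Q)\) and \(\beta=t_b(Q)\), and put \(c=a+b\). I will build an admissible filling of \(\mathcal R_c=[c]\times[n-c]\) from the actual fillings \(T^{(a)}\) of \(\mathcal R_a\) and \(T^{(b)}\) of \(\mathcal R_b\). The last cell of this filling will be at most \((\alpha+\beta)\ell\). The final assertion of Lemma~\ref{lem:labelled-rectangle-recurrence} then gives \(T^{(c)}_{c,n-c}\le(\alpha+\beta)\ell\), hence \(t_c(Q)\le\alpha+\beta\). This is the same slab-embedding device used in Theorem~\ref{thm:all-order-comparison}, except that the two slabs come from different orders.

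Split the rows of \(\mathcal R_c\) into a top slab of \(a\) rows and a bottom slab of \(b\) rows. Rows \(p\le a\) of \(\mathcal R_c\) are the rightmost \(a\) selected indices, and they are the ones that move first.

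For a top-slab cell \((p,v)\) with \(1\le p\le a\) and \(1\le v\le n-c\), the bond label is
\[
 c-p+v=a-p+(v+b).
\]
This is the label of cell \((p,v+b)\) of \(\mathcal R_a\), and since \(v+b\le n-a\) that cell exists. Assign \(\widehat T_{p,v}=T^{(a)}_{p,v+b}\). These numbers are at most \(\alpha\ell\), by the window bound \eqref{eq:crossing-window}.

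For a bottom-slab cell \((a+p',v)\) with \(1\le p'\le b\), the bond label is
\[
 c-(a+p')+v=b-p'+v.
\]
This is the label of cell \((p',v)\) of \(\mathcal R_b\). Assign \(\widehat T_{a+p',v}=\alpha\ell+T^{(b)}_{p',v}\). Periodicity preserves the bond, and every such number is at most \((\alpha+\beta)\ell\).

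Admissibility inside each slab is inherited from the two recurrences \eqref{eq:crossing-recurrence}. Across the boundary between row \(a\) and row \(a+1\), each top number is at most \(\alpha\ell\) and each bottom number is strictly larger than \(\alpha\ell\). So the predecessor inequality holds there as well. This case needs no boundary inequality of the type \(T_{1,b+1}<T_{1,b}+\ell\).

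The steps I will check most carefully are the ones that are easy to get wrong.
\begin{itemize}
\item The shift \(v\mapsto v+b\) keeps the top slab inside \(\mathcal R_a\). This needs \(n-c+b=n-a\), which holds.
\item The top-slab numbers are at most \(\alpha\ell\). This follows because every cell of \(\mathcal R_a\) precedes its southeast corner.
\item The bottom slab needs no column shift. It occupies \([b]\times[n-c]\subseteq\mathcal R_b\), since \(n-c\le n-b\).
\end{itemize}

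The main potential obstacle is to confirm that the slab assignment respects the product order on \(\mathcal R_c\): cell \((p,b)\) follows \((p-1,b)\) and \((p,b-1)\). If the orientation were reversed, the bottom slab would instead need the shift and the top slab would not. I would then just swap which of \(a,b\) occupies which slab, using the symmetry of the statement. Finiteness is automatic because \(Q\) is complete, and if \(t_a\) or \(t_b\) is infinite the inequality is trivial.
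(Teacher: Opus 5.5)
Your proposal is correct and is essentially the paper's proof: the top \(a\) rows of \(\mathcal R_{a+b}\) are filled from columns \(b+1,\ldots,n-a\) of \(\mathcal R_a\), the bottom \(b\) rows from \([b]\times[n-a-b]\subseteq\mathcal R_b\) shifted by \(t_a(Q)\ell\), and the slab boundary is handled exactly as you do (your orientation is already the right one, so no swap is needed). The one point to tighten is that \(Q\) is not assumed complete here. The paper notes that finiteness of \(t_a(Q)\) already forces \(Q\) to be complete, because the labels \(a-p+v\) on \(\mathcal R_a\) cover every bond \(1,\ldots,n-1\), and this is what licenses your use of Lemma~\ref{lem:labelled-rectangle-recurrence}.
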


\begin{proof}
If either \(t_a(Q)\) or \(t_b(Q)\) is infinite, there is nothing to prove in
the extended positive integers.  Otherwise \(Q\) contains
every adjacent position: reaching a remote corner requires, somewhere in
its cell rectangle, every label \(1,\ldots,n-1\).  Write
\(Q=(q_1,\ldots,q_\ell)\), put
\(c=a+b\), \(H=n-c\), \(p=t_a(Q)\), and \(q=t_b(Q)\).
During the first \(p\) periods, assign to the cells
\((u,v)\in[a]\times[H]\) of \({\cal R}_c\) the completion numbers of the
cells \((u,b+v)\) of \({\cal R}_a\).  Their adjacent-position labels agree:
\[
 c-u+v=a-u+(b+v).
\]
This fills the first \(a\) rows of \({\cal R}_c\).  Delays caused by the
omitted first \(b\) columns of \({\cal R}_a\) do not violate any target
dependency.

During the next \(q\) periods, assign to cell \((a+u,v)\) the completion
number of \((u,v)\in[b]\times[H]\) in \({\cal R}_b\), shifted by \(p\ell\).
Again the labels agree,
\[
 c-(a+u)+v=b-u+v.
\]
All dependencies within this second block are preserved, and its top
boundary is already filled after the first phase.  Thus every cell of
\({\cal R}_c\) is added within \(p+q\) periods.
\end{proof}

The remaining boundary case is \(k=3\).  We reduce the two triples at the
ends of a one-pass list to four extremal forms and then follow the last
positions in three fixed lists.

For a permutation \(\pi=(\pi_1,\ldots,\pi_n)\), let
\[
 X_r(\pi)=\{\pi_{n-r+1},\ldots,\pi_n\},
 \qquad
 Y_r(\pi)=\{i:\pi_i\le r\},
\]
with both sets written increasingly.

Permutation products below are composed from right to left:
\((pq)(i)=p(q(i))\).

\begin{lemma}
\label{lem:two-block-subset-criterion}
For permutations \(\pi,\rho\), two successive adjacent maps satisfy
\[
 S_\rho\bigl(S_\pi(L_r)\bigr)=R_r
 \quad\Longleftrightarrow\quad
 X_r(\rho)\preceq Y_r(\pi).
\]
In particular,
\[
 t_r(Q)\le2
 \quad\Longleftrightarrow\quad
 X_r(\sigma_Q)\preceq Y_r(\sigma_Q).
\]
\end{lemma}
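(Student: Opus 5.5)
The plan is to identify $S_\pi(L_r)$ and the set that $S_\rho$ needs to start from, and then reduce the statement to Theorem~\ref{thm:signed-word-interval}.

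First, by \eqref{eq:threshold-label-set} applied with one pass of any shortest sequence for $\pi$, we get $S_\pi(L_r)=\{i:\pi_i\le r\}=Y_r(\pi)$. So the claim becomes: for a $k$-set $Y$ (here $k=r$), $S_\rho(Y)=R_r$ if and only if $X_r(\rho)\preceq Y$.

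Second, we find the minimal starting set. Fix a shortest adjacent sequence $Q'$ for $\rho$, so that $S_\rho=c^+_{Q'}$. The map $Y\mapsto c^+_{Q'}(Y)$ is order-preserving, because each $c_a^+$ is. Hence the set of $Y$ with $c^+_{Q'}(Y)=R_r$ is an up-set in $\preceq$, since $R_r$ is the maximum. The key step is to show that this up-set is the principal up-set of $X_r(\rho)$, in two parts.

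\begin{enumerate}
\item \emph{$X_r(\rho)$ reaches $R_r$.} Run the labelled list $(1,\dots,n)$ through $Q'$; after the pass it reads $\rho$. In a shortest sequence every exchange is effective, so each step is a genuine transposition of adjacent entries. Now track the positions currently holding the labels that end in the last $r$ slots, namely the label set $X_r(\rho)$ read as values. Initially these positions are exactly $X_r(\rho)$ as a set of indices, because the starting list is the identity. At each exchange this position set either stays the same or changes by exactly $c_a^+$, as in the proof of Theorem~\ref{thm:label-sorting}. Here one must check that whenever a tracked label sits on the left of an untracked one at bond $a$, they really are swapped. In a shortest sequence every swap moves a smaller label rightward past a larger one, so the remaining issue is whether an untracked-right configuration can occur without being swapped, i.e.\ whether $c_a^+$ would act there but the swap does not happen.
\item \emph{Every $Y$ reaching $R_r$ dominates $X_r(\rho)$.} I would argue through the labelled list: $c^+_{Q'}(Y)=R_r$ means the labels starting at positions $Y$ all end in the last $r$ slots. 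Comparing with greedy moves, a set $Y\not\succeq X_r(\rho)$ fails. This step should use monotonicity together with the fact that $c^+$ tracks sets of labels only up to their order.
\end{enumerate}

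The first part is the main obstacle. Tracking labels works cleanly only when the tracked label set is an initial segment $[k]$, since then every comparison between a tracked and an untracked label is decided by value. For a general label set $X$ it is not. I would first try the cleaner route of translating everything into rank tables: $S_\rho(Y)=R_r$ is equivalent to $d^{(\cdot)}$ counts via Lemma~\ref{lem:rectangle-rank-product}. Concretely, composing $S_\pi$ then $S_\rho$ gives $d_{S_\rho(\pi)}(n-r,r)=\min_t\{d_\rho(n-r,t)+d_\pi(t,r)\}$, and we need this to equal $r$. So the condition is
\[
d_\rho(n-r,t)+d_\pi(t,r)\ge r\quad\text{for all } t.
\]
Here $d_\rho(n-r,t)=\#(X_r(\rho)\cap[t])$ and $d_\pi(t,r)=\#\{i>t:i\in Y_r(\pi)\}$. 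Writing $x(t)=\#(X\cap[t])$ and $y(t)=\#(Y\cap[t])$, the condition becomes $x(t)+r-y(t)\ge r$, i.e.\ $y(t)\le x(t)$ for all $t$. That is exactly the standard reformulation of $X\preceq Y$ (the $j$-th element of $X$ is at most the $j$-th element of $Y$ iff the counting functions satisfy $\#(X\cap[t])\ge\#(Y\cap[t])$). This rank route avoids the first part entirely, so I would adopt it as the main proof.

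The particular case follows by taking $\pi=\rho=\sigma_Q$, since two passes of $Q$ act as $S_{\sigma_Q}^2$ by Lemma~\ref{lem:three-adjacent-identities}, and $t_r(Q)\le2$ means exactly that $c^+_{Q^2}(L_r)=R_r$.
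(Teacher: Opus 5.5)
Your adopted rank-table argument is correct and is essentially the paper's proof: Theorem~\ref{thm:label-sorting} and Lemma~\ref{lem:rectangle-rank-product} turn the condition into $|X_r(\rho)\cap[t]|+|Y_r(\pi)\setminus[t]|\ge r$ for all $t$, and after cancelling $r$ this is the counting-function form of $X_r(\rho)\preceq Y_r(\pi)$. The abandoned label-tracking route in your two numbered parts is not needed and can be dropped.
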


\begin{proof}
Theorem~\ref{thm:label-sorting} and
Lemma~\ref{lem:rectangle-rank-product} say that the left side is equivalent
to
\[
 |X_r(\rho)\cap[t]|+|Y_r(\pi)\setminus[t]|\ge r
 \qquad(0\le t\le n).
\]
After cancelling \(r=|Y_r(\pi)|\), this becomes
\[
 |X_r(\rho)\cap[t]|\ge|Y_r(\pi)\cap[t]|
 \qquad(0\le t\le n),
\]
which is exactly componentwise comparison of the two increasing sets.
\end{proof}

\begin{lemma}
\label{lem:canonical-triple-reduction}
Let \(n\ge7\), and let \(\pi\) be a permutation of \([n]\) for which
\[
 I:=X_3(\pi)\preceq J:=Y_3(\pi).
\]
Put
\[
 L=[3],\qquad T=\{n-2,n-1,n\},\qquad M=\{4,\ldots,n-3\},
\]
and \(r=|I\cap L|=|J\cap T|\).  There is a permutation \(g\) such that
\[
 d_g(p,q)\le d_\pi(p,q)\qquad(0\le p,q\le n)
\]
and whose boundary triples have one of the following forms:
\[
\begin{array}{c|c}
r&(X_3(g),Y_3(g))\\ \hline
0&(K,K),\quad K\subset M,\ |K|=3,\\
1&(\{a,b,c\},\{a,b,c\}),\quad a\in L,\ b\in M,\ c\in T,\\
2&(\{a,3,c\},\{a,n-2,c\}),\quad
   a\in\{1,2\},\ c\in\{n-1,n\},\\
3&(L,T).
\end{array}
\]
In the first three cases let \(p\) be the list of the elements of
\(X_3(g)^c\), followed by those of \(X_3(g)\), each block increasing, and
write \(g=pz\) in the preceding composition convention.  Then the shortest
adjacent sequences for \(p,z\) concatenate without cancellation in both
orders, and \(zp=\rho_r\), where
\[
\begin{array}{c|c}
0&\rho_0=(4,5,\ldots,n,1,2,3),\\
1&\rho_1=(3,4,\ldots,n-3,n-1,n,1,2,n-2),\\
2&\rho_2=(2,4,5,\ldots,n-2,n,1,3,n-1).
\end{array}
\]
Moreover, in case \(r=1\), \(p_{n-2}=a\le3\); in case \(r=2\),
\(p_{n-2}=a\le2\) and \(p_{n-1}=3\).
\end{lemma}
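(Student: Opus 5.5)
The plan is to treat \(g\) as an explicit Bruhat-minimal representative and to do the reduction in two stages. In the first stage I keep the pair \((I,J)\) fixed. Among all permutations \(\theta\) with \(X_3(\theta)=I\) and \(Y_3(\theta)=J\), I single out the one whose southwest counts are pointwise smallest, and call it \(g_{I,J}\). It is built as follows:
\begin{itemize}
\item the values \(1,2,3\) are placed in increasing order at the positions of \(J\);
\item the values of \(I\) are placed in the last three positions, in the forced order when these overlap \(J\);
\item all remaining values are placed increasingly in the remaining positions.
\end{itemize}
A rectangle-by-rectangle count should give \(d_{g_{I,J}}(p,q)\le d_\pi(p,q)\) for every admissible \(\pi\). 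This is the usual argument that the ``most sorted'' filling, compatible with prescribed dots, minimizes every southwest rank, and it parallels the row and column splittings in the proof of Lemma~\ref{lem:two-row-augmentation}. Lemma~\ref{lem:two-block-subset-criterion} shows that the hypothesis \(I\preceq J\) is exactly what is preserved.

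In the second stage I move \(I\) and \(J\) themselves toward canonical forms while keeping \(I\preceq J\) and the count \(r\) fixed. The claim I will try to prove is a monotonicity statement with two parts:
\begin{itemize}
\item raising an element of \(I\), or lowering an element of \(J\), never increases any \(d_{g_{I,J}}(p,q)\);
\item this holds as long as the move stays inside the same block among \(L,M,T\).
\end{itemize}
Each such move is an adjacent change of one coordinate, so it reduces to checking that a single dot moves northeast in the rank picture. This is a one-line count of the type \eqref{eq:rectangle-rank-triangle}.

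Pushing as far as the constraint \(I\preceq J\) allows then squeezes the pair into the listed forms:
\begin{itemize}
\item \(r=0\): both triples lie in \(M\) and can be made equal to a common set \(K\) between them;
\item \(r=1\): the single elements in \(L\) and in \(T\) can be made common, as can the middle element;
\item \(r=2\): the largest element of \(I\cap L\) goes to \(3\) and the smallest element of \(J\cap T\) goes to \(n-2\);
\item \(r=3\): \((L,T)\) is forced.
\end{itemize}
The hypothesis \(n\ge7\) guarantees \(|M|\ge1\), so that these moves are available.

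For the factorization, I write \(p\) as prescribed and set \(z=p^{-1}g\). I will verify \(\ell(g)=\ell(p)+\ell(z)\) and \(\ell(zp)=\ell(z)+\ell(p)\) by counting inversions. Since \(p\) is a Grassmannian shuffle putting \(X_3(g)\) last, its inversions are exactly the pairs (element of \(X_3(g)\), larger-indexed element of the complement), and none of these is undone by \(z\) in either order. Then \(zp\) is computed explicitly in each case and compared with \(\rho_0,\rho_1,\rho_2\). The facts \(p_{n-2}=a\) and \(p_{n-1}=3\) are read off from the construction of \(p\), since \(a\) and \(3\) are the smallest elements of \(X_3(g)\).

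The main obstacle is the second stage. Moving a boundary element can also change the forced placement of the other entries, so the monotonicity may fail through interactions between the \(I\)- and \(J\)-dots when \(I\cap J\ne\emptyset\). I would handle this first by a case split on whether the moved element lies in \(I\cap J\). If that does not suffice, I would fall back on directly comparing \(g_{I,J}\) with the final canonical \(g\) by the layered-rank formula \eqref{eq:rectangle-rank-product}.
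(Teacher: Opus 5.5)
Your skeleton matches the paper's: a sorted representative for fixed \((I,J)\), then raising \(I\) and lowering \(J\) with \(r\) fixed, then splitting \(g=pz\) by inversion counting. However, the step you call the main obstacle is left open, and the first stage is misstated.

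\textbf{Stage one.} Placing \(1,2,3\) increasingly on the positions of \(J\) conflicts with putting the values of \(I\) in the last three positions. The two rules agree only when \(I\cap L\) consists of the largest \(r\) elements of \(L\). For example, if \(r=1\) and \(I\cap L=\{1\}\), the position in \(J\cap T\) must hold \(1\), not \(3\). The minimal representative \(g_{I,J}\) instead matches the position classes \(T\cap J\), \(T\setminus J\), \(J\setminus T\), \([n]\setminus(T\cup J)\) increasingly with the value classes \(I\cap L\), \(I\setminus L\), \(L\setminus I\), \([n]\setminus(I\cup L)\). Its minimality needs no rectangle-by-rectangle count. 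Every permutation with these boundary sets respects the classes. Moreover, exchanging an inverted pair (positions \(u<v\) holding values \(\alpha>\beta\)) changes \(d(p,q)\) only when \(u\le p<v\) and \(\beta\le q<\alpha\), and there it drops by one.

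\textbf{Stage two.} The missing idea is that each move is itself such an exchange inside \(g_{I,J}\), after which stage-one minimality absorbs all the re-sorting.
\begin{itemize}
\item Raising \(I\): suppose \(v\in I\), \(v+1\notin I\) and \(v\ne3\). Then \(v\) occupies one of the last three positions and \(v+1\) lies to its left. Swapping them lowers every count and gives a permutation \(\theta\) with \(X_3(\theta)=(I\setminus\{v\})\cup\{v+1\}=:I'\) and \(Y_3(\theta)=J\). Hence \(d_{g_{I',J}}\le d_\theta\le d_{g_{I,J}}\).
\item Lowering \(J\): suppose \(w\in J\), \(w-1\notin J\) and \(w\ne n-2\). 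Then position \(w\) holds a value at most \(3\) and position \(w-1\) holds a value at least \(4\), so the same argument applies.
\end{itemize}
Nothing special happens when \(I\cap J\ne\emptyset\), so you need neither a case split nor the layered-rank fallback.

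Your restriction to moves inside one block among \(L,M,T\) is unnecessary, because only the moves \(3\to4\) in \(I\) and \(n-2\to n-3\) in \(J\) change \(r\). As stated, it would also block the reduction. For \(r=1\) with \(j_1\ge4\) and \(i_3\le n-3\), reaching \((\{a,b,c\},\{a,b,c\})\) forces \(j_1\) down from \(M\) into \(L\) and \(i_3\) up from \(M\) into \(T\).

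With these repairs, your factorization step agrees with the paper's.
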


\begin{proof}
The equality defining \(r\) holds because both sides count the dots of the
permutation matrix in the last-three-rows by first-three-columns rectangle.
For fixed \(I,J\), split the positions into
\[
 T\cap J,\quad T\setminus J,\quad J\setminus T,
 \quad[n]\setminus(T\cup J),
\]
and the values into
\[
 I\cap L,\quad I\setminus L,\quad L\setminus I,
 \quad[n]\setminus(I\cup L).
\]
Corresponding classes have equal sizes.  Match them increasingly.  The
resulting permutation \(g_{I,J}\) has boundary sets \(I,J\).  Sorting within
one class only removes inverted pairs.  More explicitly, if positions
\(u<v\) contain \(\alpha>\beta\), exchanging them changes a southwest
count only when \(u\le p<v\) and \(\beta\le q<\alpha\), and then lowers it
by one.  Thus none of the counts increases.
Thus \(d_{g_{I,J}}\le d_\pi\).

Write \(I=(i_1,i_2,i_3)\) and \(J=(j_1,j_2,j_3)\).  Keeping \(r\) fixed,
raise a coordinate of \(I\) whenever the result is still a three-set below
\(J\).  In \(g_{I,J}\), the old value and the next value form an inverted
pair, so exchanging them and sorting within the four classes again cannot
increase a southwest count.  Similarly, a permitted lowering of a
coordinate of \(J\) exchanges two adjacent positions whose assigned values
are inverted, with the same effect.  Repeating these moves terminates.  The
largest possible \(I\) and then the smallest possible \(J\) are
\[
\begin{array}{c|c|c}
r&\text{largest }I&\text{smallest }J\\ \hline
0&J&I\\
1&(\min\{3,j_1\},j_2,j_3)&(i_1,i_2,\max\{n-2,i_3\})\\
2&(\min\{2,j_1\},3,j_3)&(i_1,n-2,\max\{n-1,i_3\})\\
3&L&T.
\end{array}
\]
These are exactly the four rows in the statement.

For \(r=0,1,2\), define \(p,z\) as stated.  Comparing inverted pairs within
and between the four increasing classes gives
\[
 \operatorname{inv}(pz)=\operatorname{inv}(p)+\operatorname{inv}(z),
 \qquad
 \operatorname{inv}(zp)=\operatorname{inv}(z)+\operatorname{inv}(p).
\]
Hence shortest adjacent sequences concatenate in both orders.  Substitution
in the same four classes gives the three displayed lists \(\rho_r=zp\).
Their last-position assertions follow from the definition of
\(p\).
\end{proof}

\begin{lemma}
\label{lem:three-model-tail-growth}
Let \(r\in\{0,1,2\}\), \(t\ge2\), and \(s=2t+1\le n-2\).  After \(t\)
repetitions of the adjacent map determined by \(\rho_r\),
\[
\begin{array}{c|c|c}
r&X_s(z_{\rho_r}^{(t)})&X_{s-1}(z_{\rho_r}^{(t)})\\ \hline
0&[s]&[s-1]\\
1&[s-1]\cup\{n-2\}&[s-2]\cup\{n-2\}\\
2&[s-2]\cup\{s,n-1\}&[s-2]\cup\{n-1\}.
\end{array}
\]
\end{lemma}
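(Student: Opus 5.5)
We compute the iterates \(z_{\rho_r}^{(t)}\) explicitly, by induction on \(t\), tracking only the labels that matter for the last \(s\) positions. The starting object is \(z^{(1)}=\rho_r\) itself, since \(S_{\rho_r}\) applied to the identity list gives \(\rho_r\). Each further repetition applies \(S_{\rho_r}\) to the current list.

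To apply \(S_{\rho_r}\) concretely, we fix one shortest adjacent sequence for \(\rho_r\); Lemma~\ref{lem:three-adjacent-identities} allows any choice. For \(\rho_0=(4,\ldots,n,1,2,3)\) a natural choice moves the labels \(1,2,3\), in that order, from their starting positions to the right end by sweeps of adjacent exchanges. On a general list, such an exchange swaps its pair only when the left entry is smaller. Similar explicit sequences will be written for \(\rho_1\) and \(\rho_2\): first the bulk cyclic shift, then the small corrections involving the labels \(n-2\), \(n-1\), \(n\) and \(3\).

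\textbf{Case \(r=0\).} The action is a sorting network: small labels bubble rightward as far as possible. Each pass carries three more of the smallest labels into the tail. The claim to prove is that after \(t\) passes the last \(2t+1\) positions hold exactly \([2t+1]\), with \([2t]\) in the last \(2t\) positions. An equivalent description is that the list is a decreasing arrangement of a suitable form. We track this form inductively.

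\textbf{Cases \(r=1,2\).} Here the same bubbling occurs, but the labels \(n-2\) (for \(r=1\)) or \(n-1\) and \(s\) (for \(r=2\)) become trapped in the tail. We state a precise inductive invariant, namely the full form of the last \(s+1\) entries of \(z_{\rho_r}^{(t)}\). We then verify that one further application of \(S_{\rho_r}\) raises \(t\) by one.

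To keep this manageable we use Theorem~\ref{thm:label-sorting} in its index-set form. The question "which labels lie in the last \(s\) positions" is equivalent to computing \(S_{\rho_r}^t(L_q)\) for thresholds \(q\). It can also be decided by the rank recurrence of Lemma~\ref{lem:rectangle-rank-product}. Concretely, the rank form says that \(X_s(z^{(t)})\) is determined by the numbers \(d^{(t)}_{\rho_r}(n-s,q)\) for all \(q\). These numbers are min-plus powers of the explicit rank table of \(\rho_r\), which is nearly that of a cyclic shift by three. For the shift \(\rho_0\), \(d_{\rho_0}(p,q)\) is piecewise linear, and the \(t\)-fold min-plus power can be evaluated by choosing equally spaced cuts \(c_\alpha\). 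This shows that \(d^{(t)}(n-s,q)=\min(q,s)\) up to the stated exceptions. For \(\rho_1\) and \(\rho_2\), the tables differ from a shift pattern in a bounded number of cells near the corners. We then check that optimal cut paths either avoid these cells or pay exactly one extra, and this produces the exceptional labels \(n-2\), \(n-1\) and \(s\).

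The main obstacle is the exact bookkeeping in cases \(r=1,2\). We must confirm that the trapped large label stays at the stated place, rather than being overtaken, for every \(t\) with \(2t+1\le n-2\). We also need the hypothesis \(t\ge2\), because for \(t=1\) the tail is still \(\rho_r\) itself. The plan is to first compute \(t=2\) by hand from the explicit lists. The inductive step \(t\to t+1\) should then be uniform, because away from the right boundary each pass merely shifts the block of small labels by two and admits one new label.
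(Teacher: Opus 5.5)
Your strategy, computing \(z_{\rho_r}^{(t)}\) explicitly and inducting on \(t\), is the same as the paper's. The gap is that the proposal never supplies the object the induction runs on, and for this computational lemma that object is essentially the whole proof. What is needed is a closed form for the \emph{entire} list:
\begin{itemize}
\item \(z_{\rho_0}^{(t)}=(d+1,\ldots,n,d,\ldots,1)\) with \(d=\min\{3t,n-1\}\);
\item \(z_{\rho_1}^{(t)}=(s,s+1,\ldots,n-3,n-1,n,s-1,n-2,s-2,\ldots,1)\);
\item for \(r=2\), the separate base form \(z_{\rho_2}^{(2)}=(4,6,7,\ldots,n-2,n,5,n-1,2,1,3)\), followed by \(z_{\rho_2}^{(t)}=(s-1,s+1,\ldots,n-2,n,s,n-1,s-2,\ldots,1)\) for \(t\ge3\).
\end{itemize}
With explicit shortest sequences for the \(\rho_r\), one checks the cases \(t=2,3\) and that one pass turns the form at \(s\) into the form at \(s+2\); the table is then read off.

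You promise ``a precise inductive invariant'' for \(r=1,2\) but do not state it. The invariant you do indicate, the last \(s+1\) entries, cannot be propagated. The labels that must enter the tail on the next pass lie outside it: for \(r=1\) they are \(s\) and \(s+1\), which sit in positions \(1\) and \(2\) of \(z_{\rho_1}^{(t)}\) whenever \(s+2\le n-2\). Whether they arrive depends on how the front block is arranged. Equivalently, by Theorem~\ref{thm:label-sorting}, the next tail is governed by the position sets \(Y_q(z^{(t)})\) for all thresholds \(q\), including the \(q\ge s\) that involve the front block.

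The min-plus alternative, as sketched, has two concrete defects.
\begin{itemize}
\item It bounds the minimum from the wrong side. Since \(d^{(t)}_{\rho_r}(n-s,q)\le\min\{q,s\}\) holds trivially, the substance of \(X_s=[s]\) (and of the non-exceptional part of the other rows) is a \emph{lower} bound \(\sum_\alpha d(c_{\alpha-1},c_\alpha)\ge\min\{q,s\}\) for every cut sequence. Exhibiting equally spaced cuts only bounds the minimum from above.
\item It uses the wrong base pattern for \(r=1,2\). The tables of \(\rho_1,\rho_2\) are not close to a cyclic shift by three: \(\rho_1(i)=i+2\) for \(i\le n-5\) and \(\rho_2(i)=i+2\) for \(2\le i\le n-4\). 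So they differ from the \(\rho_0\) table on a region of order \(n^2\) cells. This is exactly why the tail grows by two per pass for \(r=1,2\) but by three for \(r=0\).
\end{itemize}
A smaller point: a shortest sequence for \(\rho_0\) cannot begin by moving label \(1\) to the right, because the pairs \(\{1,2\}\) and \(\{1,3\}\) are not inverted in \(\rho_0\). The paper uses \(\prod_i s_{i+2}s_{i+1}s_i\) instead.
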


\begin{proof}
Write \(s_i\) for the adjacent exchange at positions \(i,i+1\).
Shortest sequences producing the three one-pass lists are
\begin{align*}
\rho_0&=\prod_{i=1}^{n-3}(s_{i+2}s_{i+1}s_i),\\
\rho_1&=\left(\prod_{i=1}^{n-5}s_{i+1}s_i\right)
 (s_{n-2}s_{n-3}s_{n-4})(s_{n-1}s_{n-2}s_{n-3}),\\
\rho_2&=s_1\left(\prod_{i=2}^{n-4}s_{i+1}s_i\right)
 (s_{n-1}s_{n-2}s_{n-3}),
\end{align*}
with \(i\) increasing.  Direct applications give the cases \(t=2,3\).
If the displayed block form holds at \(t\), one further application
compares only adjacent entries in its increasing intervals.  For example,
in the less symmetric third case, when \(t\ge3\), \(s=2t+1\), and
\(s+2\le n-2\), one \(\rho_2\)-pass sends
\[
 (s-1,s+1,\ldots,n-2,n,s,n-1,s-2,\ldots,1)
\]
to
\[
 (s+1,s+3,\ldots,n-2,n,s+2,n-1,s,\ldots,1),
\]
which is the same form with \(s\) replaced by \(s+2\).  The other two
cases are obtained by the same adjacent comparison.  Thus
\[
 z_{\rho_0}^{(t)}
 =(d+1,\ldots,n,d,d-1,\ldots,1),
 \qquad d=\min\{3t,n-1\},
\]
\[
 z_{\rho_1}^{(t)}
 =(s,s+1,\ldots,n-3,n-1,n,s-1,n-2,s-2,\ldots,1),
\]
and
\[
 z_{\rho_2}^{(2)}
 =(4,6,7,\ldots,n-2,n,5,n-1,2,1,3),
\]
\[
 z_{\rho_2}^{(t)}
 =(s-1,s+1,s+2,\ldots,n-2,n,s,n-1,s-2,\ldots,1)
 \quad(t\ge3).
\]
Empty intervals are omitted.  Reading the last \(s\) and \(s-1\) entries
gives the table.
\end{proof}

\begin{lemma}
\label{lem:order-three-cone}
If \(n\ge7\) and \(t_3(Q)\le2\), then
\[
 t_j(Q)\le\left\lfloor\frac j2\right\rfloor+1,
 \qquad3\le j\le n-2.
\]
\end{lemma}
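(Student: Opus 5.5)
The plan is to reduce to the three model permutations $\rho_0,\rho_1,\rho_2$ of Lemma~\ref{lem:canonical-triple-reduction} and then read off the tail growth from Lemma~\ref{lem:three-model-tail-growth}.

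\textbf{Reduction to the models.} Put $\pi=\sigma_Q$. By Lemma~\ref{lem:two-block-subset-criterion}, the hypothesis $t_3(Q)\le2$ says exactly that $X_3(\pi)\preceq Y_3(\pi)$. Lemma~\ref{lem:canonical-triple-reduction} then gives a permutation $g$ with $d_g\le d_\pi$ and with boundary triples in one of the four canonical forms. By the monotonicity \eqref{eq:rank-order-monotonicity}, $t_j(\pi)\le t_j(g)$ for all $j$, so it suffices to bound $t_j(g)$.

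\textbf{The case $r=3$.} Here $X_3(g)=L$ and $Y_3(g)=T$, so already $t_3(g)=1$. Subadditivity (Lemma~\ref{lem:profile-subadditivity}) together with $t_1,t_2\le$ small values should give the bound. A more direct route is probably cleaner: with $Y_3(g)=T$, the first pass already carries labels $1,2,3$ into the last three positions. The $r=3$ form makes $g$ dominated, in rank order, by a permutation whose one-pass list ends in $1,2,3$, and this permutation advances three labels per pass. So I would compare $g$ with $\rho_0$ as well, using \eqref{eq:rank-order-monotonicity}.

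\textbf{The cases $r=0,1,2$.} Write $g=pz$ with the non-cancelling factorization of Lemma~\ref{lem:canonical-triple-reduction}. Since the shortest sequences concatenate in both orders, the $0$-Hecke identities of Lemma~\ref{lem:three-adjacent-identities} give
\[
S_g^{t}=S_p\,(S_zS_p)^{t-1}S_z \quad\text{(in the appropriate order)},
\]
which equals $S_p\,S_{\rho_r}^{t-1}S_z$. So $t$ passes of $g$ amount to one $z$-step, then $t-1$ passes of $\rho_r$, then one $p$-step. I would reorganize this as $t$ passes of $\rho_r$ sandwiched between a prefix and a suffix that can only help. Concretely, since $S_z$ applied after $S_p$ only advances the selected positions, $t$ passes of $g$ dominate $S_p$ followed by $t-1$ further $\rho_r$-passes.

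Then apply Lemma~\ref{lem:three-model-tail-growth}. For $j$ odd, $j=s=2t+1$, we want all of $[s]$ among the last $s$ positions after $t+\ldots$ passes. The target bound is $\lfloor j/2\rfloor+1=t+1$ passes of $g$, i.e.\ roughly $t$ passes of $\rho_r$ plus the boundary factor $p$. The table shows that after $t$ passes of $\rho_r$ the last $s$ entries are $[s]$ up to a defect of at most two "foreign" large entries ($n-2$ for $r=1$; $s$ and $n-1$ misplaced for $r=2$). The final application of $p$ must repair this defect. The last-position facts $p_{n-2}=a\le3$ and $p_{n-1}=3$ are exactly what moves small labels into those slots. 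For $j$ even, $j=s-1$, I would use the $X_{s-1}$ column in the same way with the same number of passes. The small cases $t\le1$, i.e.\ $j=3,4$, I would check directly from the canonical forms.

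\textbf{Main obstacle.} The hard part is the bookkeeping at the interface: showing precisely that composing with $p$ (and $z$) converts the tail sets in the table into $R_j$-membership of $[j]$, i.e.\ that $X_j$ of the final list equals $[j]$. I would handle this by Lemma~\ref{lem:two-block-subset-criterion} applied with $\rho=p$ and the current list. This reduces the interface step to checking a componentwise inequality $X_j(p)\preceq Y_j(\cdot)$, which can be read off from the explicit block form of $p$ and the explicit lists $z_{\rho_r}^{(t)}$ in the proof of Lemma~\ref{lem:three-model-tail-growth}. The boundary $j=n-2$ (where $s$ may hit $n-2$) needs separate attention.
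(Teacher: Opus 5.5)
\textbf{Verdict: gap in the cases $r=0,1,2$.} You apply the factors $p$ and $z$ in the wrong order, so the interface inequality you propose to check is false in general.

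\textbf{What is fine.} Your reduction to $g$ via Lemma~\ref{lem:two-block-subset-criterion}, Lemma~\ref{lem:canonical-triple-reduction} and \eqref{eq:rank-order-monotonicity} matches the paper. The case $r=3$ also works once you note that there $g$ is exactly $\rho_0=(4,\ldots,n,1,2,3)$, so the $r=0$ row of Lemma~\ref{lem:three-model-tail-growth} applies directly. The paper instead uses $t_3(g)=1$ and the one-sided all-order comparison. Your subadditivity variant would still need $t_1(g),t_2(g)\le 2$, which you did not establish.

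\textbf{Where the order goes wrong.} A sequence $(q_1,\ldots,q_\ell)$ applied to $(1,\ldots,n)$ produces the list $s_{q_1}\cdots s_{q_\ell}$ in the convention $(pq)(i)=p(q(i))$. Hence the block producing $g=pz$ is $PZ$, with $P$ acting first. So $t+1$ passes of $g$ are the concatenation $P\,(ZP)^t\,Z$: first $p$, then $t$ passes of $\rho_r=zp$, then $z$. There is no copy of $P$ after the last $\rho_r$-pass. Your picture of ``$t$ passes of $\rho_r$ followed by a final $p$ that repairs the defect'' is therefore not contained in $t+1$ passes of $g$. Your phrase ``one $z$-step, then $t-1$ passes of $\rho_r$, then one $p$-step'' is reversed for the same reason, although your next sentence, with $S_p$ first, has the right shape.

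\textbf{The proposed check fails.} You apply Lemma~\ref{lem:two-block-subset-criterion} with $\rho=p$, i.e.\ you check $X_s(p)\preceq Y_s(z^{(t)}_{\rho_r})$. Take $n=9$, $r=1$, $(a,b,c)=(3,4,9)$. Then
\begin{itemize}
\item $g=(5,6,1,2,7,8,4,9,3)$,
\item $p=(1,2,5,6,7,8,3,4,9)$,
\item $\rho_1=(3,4,5,6,8,9,1,2,7)$,
\item $z^{(2)}_{\rho_1}=(5,6,8,9,4,7,3,2,1)$.
\end{itemize}
So $X_5(p)=\{3,4,7,8,9\}\not\preceq\{1,5,7,8,9\}=Y_5(z^{(2)}_{\rho_1})$. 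Label $5$ sits at position $1$, and $p$ uses only bonds $3,\ldots,7$, so no final $p$ can move it.

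\textbf{The correct check.} Apply the lemma with $\pi=p$ first and $\rho=z^{(t)}_{\rho_r}$ second, i.e.\ verify $X_s(z^{(t)}_{\rho_r})\preceq Y_s(p)$, and likewise for $s-1$. This is where $p_{n-2}=a\le 3$ (and $p_{n-1}=3$ when $r=2$) is used. These facts put the late positions $n-2$ (and $n-1$) into $Y_s(p)$, so $Y_s(p)$ dominates the tail sets in the table of Lemma~\ref{lem:three-model-tail-growth}. In the example, $X_5(z^{(2)}_{\rho_1})=\{1,2,3,4,7\}\preceq\{1,2,3,7,8\}=Y_5(p)$. The trailing $Z$ then fixes $R_s$, giving $t_s(g),t_{s-1}(g)\le t+1$. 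Orders $3,4$, and order $n-2$ when $n$ is even, follow from the hypothesis and the neighboring-order inequality.
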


\begin{proof}
Put \(\pi=\sigma_Q\).  Lemma~\ref{lem:two-block-subset-criterion} gives
\(X_3(\pi)\preceq Y_3(\pi)\).  Apply
Lemma~\ref{lem:canonical-triple-reduction}.  Since \(d_g\le d_\pi\),
\eqref{eq:rank-order-monotonicity} shows that it is enough to bound
\(t_j(g)\).

If \(r=3\), then \(g=(4,5,\ldots,n,1,2,3)\), a shortest adjacent sequence
for \(g\) is complete, and \(t_3(g)=1\).  The one-sided comparison proved
in Theorem~\ref{thm:all-order-comparison} gives
\[
 t_j(g)\le\left\lceil\frac j3\right\rceil
 \le\left\lfloor\frac j2\right\rfloor+1
 \qquad(j\ge3),
\]
which is already stronger than the required estimate.

Let \(r=0,1,2\), and choose shortest adjacent sequences \(P,Z\) producing
\(p,z\).  One block producing \(g\) is \(PZ\), while \(ZP\) produces
\(\rho_r\).  Hence \(m\) repeated \(g\)-blocks may be written, as an
ordinary concatenation,
\[
 P\,(ZP)^{m-1}\,Z.
\]

Take \(t\ge2\) and \(s=2t+1\le n-2\).  In case \(r=0\), the two suffix
sets in Lemma~\ref{lem:three-model-tail-growth} are componentwise smallest.
In case \(r=1\), the exceptional position \(n-2\) contains under \(p\) the
value \(a\le3\).  In case \(r=2\), positions \(n-2,n-1\) contain values at
most \(2,3\).  Therefore, in all three cases,
\[
 X_s(z_{\rho_r}^{(t)})\preceq Y_s(p),
 \qquad
 X_{s-1}(z_{\rho_r}^{(t)})\preceq Y_{s-1}(p).
\]
Lemma~\ref{lem:two-block-subset-criterion} says that the initial block
\(P\), followed by \(t\) blocks \(ZP\), carries both \(L_s\) and
\(L_{s-1}\) to their
rightmost sets.  The final block \(Z\) leaves a rightmost set fixed.
Consequently
\[
 t_s(g)\le t+1,\qquad t_{s-1}(g)\le t+1.
\]
This proves the estimate for every odd \(s\ge5\) and the preceding even
order.  Orders \(3,4\) follow from the hypothesis and the neighboring-order
inequality.  If \(n\) is even, the remaining even endpoint follows once
more from that inequality.
\end{proof}

\subsection{Bounds at the remaining orders}

\begin{theorem}
\label{thm:two-sweep-cone}
Let \(Q\) be any adjacent factor sequence.  If \(3\le k\le n-3\) and
\(t_k(Q)\le2\), then
\[
 t_j(Q)\le2+\left\lceil\frac{|j-k|}{2}\right\rceil,
 \qquad2\le j\le n-2.
\]
Consequently, every oscillatory matrix satisfies
\[
 e_k(A)\le2
 \quad\Longrightarrow\quad
 e_j(A)\le2+\left\lceil\frac{|j-k|}{2}\right\rceil,
 \qquad2\le j\le n-2.
\]
\end{theorem}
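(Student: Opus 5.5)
By the reflection $w\mapsto w^*=(n-c_1,\ldots,n-c_\ell)$ from the proof of Lemma~\ref{lem:one-period-interlacing}, we have $t_r(w)=t_{n-r}(w^*)$. The hypothesis range $3\le k\le n-3$ and the target range $2\le j\le n-2$ are both invariant under $r\mapsto n-r$, and $|j-k|$ is unchanged. So it suffices to prove the one-sided bound for $j\ge k$. For $j<k$ we apply that bound to $w^*$ at orders $n-k\le n-j$. The matrix statement then follows, as in Theorem~\ref{thm:two-sweep-compatibility}, by applying the word statement separately to $Q_+$ and $Q_-$ and invoking \eqref{eq:word-profile-formula}.

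For $j\ge k$ put $d=j-k$. The cases $d\le2$ are already available. The case $d=0$ is the hypothesis, $d=1$ follows from Lemma~\ref{lem:one-period-interlacing}, and $d=2$ follows from Theorem~\ref{thm:two-sweep-compatibility} when $k\le n-4$ (for $k=n-3$ we have $j=n-1>n-2$). For $k=3$ the whole statement is Lemma~\ref{lem:order-three-cone}, since $\lfloor (3+d)/2\rfloor+1=2+\lceil d/2\rceil$. Thus the real content is $k\ge4$, $d\ge3$. Because of the neighbouring-order inequality, it is enough to prove the bound at even offsets $d=2i$:
\[
t_{k+2i}(Q)\le i+2 .
\]
The odd offsets then follow, since $2+\lceil(2i+1)/2\rceil=i+3$.

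For this I would generalize the proof of Lemma~\ref{lem:order-three-cone} from triples to $k$-sets. The steps are as follows.

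\begin{enumerate}
\item Lemma~\ref{lem:two-block-subset-criterion} turns the hypothesis into $X_k(\pi)\preceq Y_k(\pi)$ for $\pi=\sigma_Q$.
\item As in Lemma~\ref{lem:canonical-triple-reduction}, I match the four position classes ($T_k\cap J$, etc.) and four value classes increasingly. I then raise $I$ and lower $J$ while keeping $r=|I\cap L_k|=|J\cap R_k|$ fixed. This yields $g$ with $d_g\le d_\pi$, so by \eqref{eq:rank-order-monotonicity} it suffices to bound $t_j(g)$.
\item The extremal forms should be $I=L_k$-like on the $r$ overlapping coordinates and $J=R_k$-like there, with the remaining $k-r$ coordinates forced to coincide, as in the rows $r=0,1,2$ of the triple table.
\item Writing $g=pz$ with $p$ the list of $X_k(g)^c$ followed by $X_k(g)$, the blocks concatenate without cancellation. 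Then $m$ repetitions of $g$ read $P(ZP)^{m-1}Z$.
\item One then tracks the model lists $\rho=zp$ exactly, as in Lemma~\ref{lem:three-model-tail-growth}, and shows that each further $\rho$-pass extends the sorted tail by two.
\item The tail therefore satisfies $X_s(z_\rho^{(t)})\preceq Y_s(p)$ for $s=k+2(t-1)$, and also for $s-1$. Lemma~\ref{lem:two-block-subset-criterion} then gives $t_s(g)\le t+1$, which is the required bound.
\end{enumerate}

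The case $r=k$ ($g$ is the rotation by $k$) is handled directly by Theorem~\ref{thm:all-order-comparison}, giving $t_j\le\lceil j/k\rceil$, which is far stronger.

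The main obstacle is step~3 together with step~5: classifying the extremal forms for $k$-sets and computing their tail growth uniformly in $r\in\{0,\ldots,k-1\}$. For triples this was a finite table; here the middle coordinates can interleave. What I would try first is to show that the extremal $g$ depends only on $r$, up to a relabelling that is monotone for the southwest ranks. Then only $k$ model lists $\rho_r$ remain. For each of these I would verify the "two new tail entries per pass" invariant by the same adjacent-comparison induction used for $\rho_2$ in Lemma~\ref{lem:three-model-tail-growth}.

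If that classification resists, a fallback is to try reducing to the proved case $k=3$ via the slab embeddings of Lemma~\ref{lem:profile-subadditivity}. Using the bottom three rows of $\mathcal R_k$ as a $3$-row model looks promising, but the naive splitting only yields a subadditive bound, so some additional overlap argument would be needed.
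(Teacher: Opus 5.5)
Your outer reductions are sound and coincide with the paper's:
\begin{itemize}
\item reflecting $Q\mapsto Q^*$ to reduce to $j\ge k$;
\item the case $k=3$ via Lemma~\ref{lem:order-three-cone}, plus the neighboring-order inequality when $n=6$, where that lemma does not apply;
\item the offsets $j-k\le2$, and the reduction to even offsets;
\item the passage to matrices via \eqref{eq:word-profile-formula}.
\end{itemize}
The gap is the one case with real content, $k\ge4$ and $j-k\ge3$. Your Steps~3, 5 and~6 there are a programme, not an argument. You would need three things, and none of them is established:
\begin{itemize}
\item a classification of the extremal $k$-set configurations, with interleaving middle coordinates;
\item explicit model lists $\rho_r$ for every $r<k$;
\item a ``two new tail entries per pass'' invariant for each of them.
\end{itemize}
You correctly flag this as the main obstacle, but as written the theorem is not proved for any $k\ge4$.

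The missing idea is that for $k\ge4$ no structural analysis is needed. Your fallback discards subadditivity too early because it splits off blocks of order $3$, about which the hypothesis says nothing. Split off blocks of order $k$ instead. Write $j=(q+1)k+s$ with $q\ge0$ and $0\le s<k$. Repeated use of Lemma~\ref{lem:profile-subadditivity} gives
\[
 t_j(Q)\le q\,t_k(Q)+t_{k+s}(Q)\le 2q+t_{k+s}(Q).
\]
The remainder term is controlled by results already in hand:
\begin{itemize}
\item $t_{k+s}(Q)\le2$ for $s=0$;
\item $t_{k+s}(Q)\le3$ for $s=1,2$, by Lemma~\ref{lem:one-period-interlacing} and Theorem~\ref{thm:two-sweep-compatibility};
\item $t_{k+s}(Q)\le\lceil (k+s)/k\rceil\,t_k(Q)\le4$ for $3\le s<k$, by Theorem~\ref{thm:all-order-comparison}.
\end{itemize}
Since $k\ge4$ gives $qk\ge4q$, in every case
\[
 2+\left\lceil\frac{qk+s}{2}\right\rceil\ge 2+2q+\left\lceil\frac{s}{2}\right\rceil\ge 2q+t_{k+s}(Q),
\]
which is the claimed bound. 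This is the paper's proof.

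The canonical-reduction machinery is needed only at $k=3$. There two passes per three orders is a rate of $2/3>1/2$, so subadditivity alone cannot give $2+\lceil (j-k)/2\rceil$. That is exactly why the paper isolates Lemma~\ref{lem:order-three-cone}.
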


\begin{proof}
First let \(j\ge k\).  For \(k=3\), the result is
Lemma~\ref{lem:order-three-cone}; the short case \(n=6\) follows directly
from the neighboring-order inequality.  Let \(k\ge4\), and write
\(j=(q+1)k+s\), where \(q\ge0\) and \(0\le s<k\).  The neighboring-order
	inequality, Theorem~\ref{thm:two-sweep-compatibility}, and the all-order
	comparison give
\[
 t_{k+s}(Q)\le
 \begin{cases}
 2,&s=0,\\
 3,&s=1,2,\\
 4,&3\le s<k.
 \end{cases}
\]
Repeated subadditivity therefore gives
\[
 t_j(Q)\le q\,t_k(Q)+t_{k+s}(Q)
 \le2+\left\lceil\frac{qk+s}{2}\right\rceil.
\]
For the last inequality use \(qk\ge4q\) and the four cases above.  This is
the required right-hand bound.

For \(j\le k\), reflect positions and complement index sets.  The reflected
sequence \(Q^*=(n-q_1,\ldots,n-q_\ell)\) satisfies
\(t_r(Q)=t_{n-r}(Q^*)\); apply the already proved right-hand bound with
reference order \(n-k\).  Finally, if \(e_k(A)\le2\), both one-sided
sequences \(Q_+,Q_-\) satisfy the hypothesis.  Take their pointwise maximum
in \eqref{eq:word-profile-formula}.
\end{proof}

\section{Conclusion}

The positive support of every compound row of a nonsingular TN matrix is a
complete componentwise interval, and its two endpoints compose under
multiplication.  This determines every positive minor of every power and
gives the fixed-order corner formula, the sharp fixed-order bound, and the
comparison inequalities between different minor orders.  Checkerboard
inversion reverses the first \(n-1\) components of the profile and leaves
the determinant exponent equal to one.

The ordered locations of the positive factors in an adjacent bidiagonal
factorization determine the profile; their positive values do not.  Two
permutations suffice, and every realizable profile has an integer unimodular
realization.  The pairwise inequalities do not characterize the profiles:
if \(3\le k\le n-3\) and \(e_k(A)\le2\), then
\[
 e_j(A)\le2+\left\lceil\frac{|j-k|}{2}\right\rceil
 \qquad(2\le j\le n-2).
\]
The first implication in this family also holds for one remote corner of an
arbitrary nonsingular TN matrix.

\end{document}